\documentclass[11pt,reqno]{amsart}

\usepackage[T1]{fontenc}
\usepackage[utf8]{inputenc}

\usepackage{amsmath,amssymb,amsfonts,amsthm}
\usepackage{mathtools}
\usepackage{mathrsfs}
\usepackage{bm}

\usepackage{enumitem}
\setlist[itemize]{leftmargin=*,nosep}
\setlist[enumerate]{leftmargin=*,nosep}

\usepackage{hyperref}
\hypersetup{
  colorlinks=true,
  linkcolor=blue,
  citecolor=blue,
  urlcolor=blue,
  pdftitle={Quantitative Fixed-Point Theorems with Verifiable Hypotheses},
  pdfauthor={},
}
\usepackage[nameinlink,noabbrev]{cleveref}

\theoremstyle{plain}
\newtheorem{theorem}{Theorem}[section]
\newtheorem{lemma}[theorem]{Lemma}
\newtheorem{proposition}[theorem]{Proposition}
\newtheorem{corollary}[theorem]{Corollary}

\theoremstyle{definition}
\newtheorem{definition}[theorem]{Definition}
\newtheorem{assumption}[theorem]{Assumption}
\newtheorem{example}[theorem]{Example}
\newtheorem{notation}[theorem]{Notation}

\theoremstyle{remark}
\newtheorem{remark}[theorem]{Remark}

\newcommand{\R}{\mathbb{R}}
\newcommand{\N}{\mathbb{N}}

\newcommand{\dist}{\mathrm{d}}
\newcommand{\Fix}{\mathrm{Fix}}
\newcommand{\Lip}{\mathrm{Lip}}

\newcommand{\norm}[1]{\left\lVert #1 \right\rVert}

\newcommand{\Phibound}{\Phi}
\newcommand{\StabC}{C_{\mathrm{stab}}}

\newcommand{\xn}{\{x_n\}_{n\in\N}}

\title[Quantitative fixed points]{
  Quantitative fixed-point theorems with verifiable hypotheses: rates and stability
}
\author{
  Chandrasekhar Gokavarapu
}

\author{
Dr Srinivasulu Ch
}
\author{
Dr D V N S  Sriram Murthy 
}

\author{
Rajeev Muthu
}

\address{
 Lecturers in Mathematics, Government College (Autonomous), Rajahmundry, A.P., India
}
\email{
  chandrasekhargokavarapu@gmail.com
}
 \thanks{ORCID: \url{https://orcid.org/0009-0006-5306-371X}} 

\subjclass[2020]{
   47H10; 54H25; 65J15; 45G10; 34B15
}

\keywords{%
  quantitative fixed point; a priori error bound; data dependence; stability; inexact evaluation; contractive gauge; fixed point iteration; integral equations; Green operator; boundary value problems
}

\begin{document}
\begin{abstract}
Let $(X,\dist)$ be a complete metric space and let $C\subseteq X$ be a closed invariant set.
We study fixed points of maps $T\colon C\to C$ governed by a \emph{verifiable} contractive modulus.
The modulus is encoded by a contractive gauge $\omega$ and a certified constant
$\kappa=\sup_{0<r\le R}\omega(r)/r<1$ on a computable working radius $R$.
From this datum we derive explicit a priori bounds
$\dist(x_n,x^\ast)\le \Phi(n;\kappa,\delta_0)$ for Picard iterates,
a residual-to-error estimate, and a quantitative data dependence bound
$\dist(x^\ast,y^\ast)\le (1-\kappa)^{-1}\sup_{x\in C}\dist(Tx,Sx)$.
We further treat inexact evaluations
$\dist(\tilde x_{n+1},T\tilde x_n)\le \eta_n$
and obtain certified resilience bounds with the same stability factor.
The framework applies to Hammerstein--Volterra integral equations and to boundary value problems via Green operators, where kernel bounds yield certified convergence rates.
\end{abstract}

\maketitle


\section{Introduction}\label{sec:introduction}

Let $(X,\dist)$ be a metric space and $T\colon X\to X$. We study the fixed point problem $x=Tx$ from a quantitative viewpoint: hypotheses must be checkable on concrete operators, and conclusions must provide explicit rates, stopping certificates, and stability under perturbations. Our motivating test class consists of integral equations and boundary value problems, where the operator is induced by kernels and nonlinearities and all relevant constants are computable.

\subsection{What is proved}\label{subsec:intro-results}

We fix a complete metric space $(X,\dist)$.
We consider a class of maps $T$ controlled by an explicit \emph{contractive gauge} $\omega\colon [0,\infty)\to[0,\infty)$ and an explicit parameter vector $\theta$.
The data $\theta$ are finite-dimensional.
They can be estimated from the model.
The standing inequality has the form
\[
\dist(Tx,Ty)\le \omega\bigl(\dist(x,y);\theta\bigr)\qquad (x,y\in X),
\]
together with a checkable condition ensuring that the scalar recursion
\[
r_{n+1}=\omega(r_n;\theta)
\]
converges to $0$ with an explicit rate.
This scalar recursion becomes the rate function.

\smallskip
\noindent\textbf{Theorem A (existence, uniqueness, and rate).}
Under the framework hypotheses, $\Fix(T)=\{x^\ast\}$.
For every $x_0\in X$, the Picard orbit $x_{n+1}=Tx_n$ satisfies
\[
\dist(x_n,x^\ast)\le \Phibound(n;\theta,\dist(x_0,x_1)).
\]
The function $\Phibound$ is explicit.
It is computed from $\omega$ and $\theta$.
It yields an a priori stopping rule.

\smallskip
\noindent\textbf{Theorem B (data dependence).}
Let $S$ satisfy the same framework with the same gauge class and parameters $\theta'$.
Assume $\|T-S\|:=\sup_{x\in X}\dist(Tx,Sx)<\infty$.
Then the fixed points satisfy
\[
\dist(x^\ast,y^\ast)\le \StabC(\theta,\theta')\,\|T-S\|.
\]
The constant $\StabC$ is explicit.

\smallskip
\noindent\textbf{Theorem C (inexact evaluation).}
Assume the computed iterates satisfy $\dist(\tilde x_{n+1},T\tilde x_n)\le \eta_n$ with a known error budget $\{\eta_n\}$.
Then $\dist(\tilde x_n,x^\ast)$ admits an explicit bound in terms of $\Phibound$ and $\{\eta_n\}$.

The final statements are given in \Cref{sec:main-theorem,sec:apriori,sec:stability,sec:inexact}.
They subsume several familiar situations.
They also exclude many fashionable hypotheses.
The exclusions are deliberate.

\subsection{Verifiable models}\label{subsec:intro-models}

We treat two model classes.

\smallskip
\noindent\textbf{Integral equations.}
Let $X=C([a,b])$ with the sup norm.
Consider an operator of Hammerstein type
\[
(Tx)(t)=g(t)+\int_a^b K(t,s)\,f(s,x(s))\,ds .
\]
Assume $f$ is Lipschitz in the second argument with constant $L_f$.
Assume $K$ satisfies $\sup_{t}\int_a^b |K(t,s)|\,ds \le M_K$.
Then $T$ is Lipschitz with constant $L_f M_K$.
This constant is measurable.
It produces $\theta$.
Recent work often invokes such applications, but the constants are rarely propagated into a closed-form $\Phibound$; see
\cite{JubairEtAl2021JIA,SagheerEtAl2023JIA,SinghEtAl2024JIA,Nawaz2024JIA,IshtiaqBatoolHussainAlsulami2025JIA,KumarEtAl2025BVP}.

\smallskip
\noindent\textbf{Boundary value problems.}
We reduce a BVP to a fixed point equation via a Green operator.
The kernel bounds are explicit.
They enter $\theta$ in the same way.
The stability estimate becomes a parameter dependence bound.

\subsection{Plan of the paper}
Sections~2--3 fix notation and the verifiable contractive framework. Sections~4--7 contain the main quantitative results (rates, stopping rules, stability, inexactness). Sections~8--9 treat integral equations and boundary value problems.

\Cref{sec:preliminaries} fixes notation and the iteration schemes.
\Cref{sec:framework} states the verifiable contractive framework.
\Cref{sec:main-theorem} gives the core theorem.
\Cref{sec:apriori} derives explicit a priori bounds and stopping rules.
\Cref{sec:stability} proves data dependence estimates.
\Cref{sec:inexact} treats inexact evaluation.
\Cref{sec:integral,sec:bvp} give applications to integral equations and BVPs.
\Cref{sec:scope} records the scope and the limits.

\subsection{Main contributions}
The present work makes the following contributions.
\begin{itemize}
\item[(i)] We introduce a \emph{verifiable contractive framework} based on a contractive gauge $\omega$ and a computable working radius $R$, yielding a certified contraction constant
\[
\kappa=\sup_{0<r\le R}\frac{\omega(r;\theta)}{r}<1.
\]
Our hypotheses are expressed entirely in terms of checkable numerical parameters (see \Cref{thm:framework-fixed-point}).

\item[(ii)] We derive \emph{quantitative a priori convergence rates} for Picard iteration and a residual-to-error conversion yielding certified stopping criteria (see \Cref{thm:rate,prop:residual-to-error}).

\item[(iii)] We establish a sharp \emph{data dependence estimate} with explicit stability factor $(1-\kappa)^{-1}$ under a certified modulus (see \Cref{thm:stability}).

\item[(iv)] We treat inexact evaluation schemes $\dist(\tilde x_{n+1},T\tilde x_n)\le \eta_n$ and prove quantitative resilience bounds with the same stability constant (see \Cref{thm:inexact-apriori}).

\item[(v)] We apply the theory to Hammerstein--Volterra integral equations and to boundary value problems via Green operators, exporting kernel/Lipschitz data into certified rates and stability constants (see \Cref{sec:integral,sec:bvp}).
\end{itemize}

\section{Preliminaries and standing hypotheses}\label{sec:preliminaries}

\subsection{Metric and normed settings}\label{subsec:metric-setting}

A \emph{metric space} is a pair $(X,\dist)$ with $\dist\colon X\times X\to[0,\infty)$ satisfying the usual axioms.
Completeness is assumed when stated.

Write $\mathbb{B}(x,r):=\{y\in X:\dist(x,y)\le r\}$.

Write $\Fix(T):=\{x\in X:Tx=x\}$ for $T\colon X\to X$.

A \emph{normed space} is a pair $(E,\|\cdot\|)$.
We use the metric $\dist(x,y):=\|x-y\|$.
When $E$ is complete, it is a Banach space.

Two model spaces are used later.

\begin{definition}[Continuous-function model space]\label{def:Cab}
Let $[a,b]\subset\R$.
Set $X=C([a,b])$ with norm $\|x\|_\infty:=\sup_{t\in[a,b]}|x(t)|$.
Then $(X,\|\cdot\|_\infty)$ is a Banach space.
\end{definition}

\begin{definition}[Kernel operators]\label{def:kernel-operator}
Let $K\colon [a,b]\times[a,b]\to\R$ be measurable.
Define
\[
(M_K)_{\mathrm{H}}:=\sup_{t\in[a,b]}\int_a^b |K(t,s)|\,ds,
\qquad
(M_K)_{\mathrm{V}}:=\sup_{t\in[a,b]}\int_a^t |K(t,s)|\,ds .
\]
These are finite when $K$ is integrable in the second variable uniformly in $t$.
\end{definition}

The Hammerstein and Volterra forms appear as fixed point equations in $C([a,b])$.
Such formulations are standard in the fixed-point literature on integral equations; see, for instance,
\cite{JubairEtAl2021JIA,SagheerEtAl2023JIA,SinghEtAl2024JIA,Nawaz2024JIA,IshtiaqBatoolHussainAlsulami2025JIA,KumarEtAl2025BVP}.

\subsection{Operator norms and verifiable constants}\label{subsec:verifiable-constants}

Verifiability is enforced by explicit constants.
We use only constants that can be computed from the data of a model or a numerical scheme.

\begin{definition}[Lipschitz constant]\label{def:Lip}
Let $(X,\dist)$ be a metric space and $T\colon X\to X$.
Define
\[
\Lip(T):=\sup_{x\neq y}\frac{\dist(Tx,Ty)}{\dist(x,y)}\in[0,\infty].
\]
If $\Lip(T)\le L$ we say that $T$ is $L$-Lipschitz.
If $L<1$ we say that $T$ is a contraction.
\end{definition}

\begin{definition}[Uniform perturbation size]\label{def:uniform-perturb}
Let $(X,\dist)$ be a metric space.
For $T,S\colon X\to X$ define
\[
\|T-S\|_\infty:=\sup_{x\in X}\dist(Tx,Sx)\in[0,\infty].
\]
When $X$ is bounded or when $T-S$ is uniformly bounded on the region of interest, this quantity is finite.
\end{definition}

We record the two estimates that motivate the quantitative and stability requirements.
They are exact.
They expose the dependence on measurable data.
They will be generalized later; compare
\cite{Jachymski2024RACSAM,Proinov2020JFPTA,PantPantJha2020JFPTA}.

\begin{proposition}[A priori error bound for a contraction]\label{prop:apriori-contraction}
Let $(X,\dist)$ be complete and let $T\colon X\to X$ satisfy $\Lip(T)\le L<1$.
Fix $x_0\in X$ and define $x_{n+1}:=Tx_n$.
Then there exists a unique $x^\ast\in\Fix(T)$ and, for all $n\in\N$,
\[
\dist(x_n,x^\ast)\le \frac{L^{\,n}}{1-L}\,\dist(x_0,x_1).
\]
\end{proposition}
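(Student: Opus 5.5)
The plan is to run the classical Banach contraction argument while keeping every constant explicit, so that the bound comes out of the construction itself rather than from a separate estimate.

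\textbf{Step 1: geometric decay of consecutive steps.} Write $\delta_0:=\dist(x_0,x_1)$. Since $\dist(x_{k+1},x_{k+2})=\dist(Tx_k,Tx_{k+1})\le L\,\dist(x_k,x_{k+1})$, induction on $k$ gives
\[
\dist(x_k,x_{k+1})\le L^{k}\delta_0 \qquad (k\in\N).
\]

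\textbf{Step 2: the orbit is Cauchy.} For $m>n$, the triangle inequality and a geometric sum give
\[
\dist(x_n,x_m)\le \sum_{k=n}^{m-1}L^{k}\delta_0\le \frac{L^{n}}{1-L}\,\delta_0 .
\]
Because $L<1$, the right-hand side tends to $0$ as $n\to\infty$. Hence $(x_n)$ is Cauchy, and by completeness it converges to some $x^\ast\in X$.

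\textbf{Step 3: $x^\ast$ is the unique fixed point.} A map with $\Lip(T)\le L$ is continuous, so
\[
x^\ast=\lim_n x_{n+1}=\lim_n Tx_n=Tx^\ast .
\]
For uniqueness, suppose $y^\ast$ is another fixed point. Then $\dist(x^\ast,y^\ast)=\dist(Tx^\ast,Ty^\ast)\le L\,\dist(x^\ast,y^\ast)$. Since $L<1$, this forces $\dist(x^\ast,y^\ast)=0$.

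\textbf{Step 4: the a priori bound.} In the Step 2 inequality, fix $n$ and let $m\to\infty$. The metric is continuous, so
\[
\dist(x_n,x^\ast)\le \frac{L^{n}}{1-L}\,\dist(x_0,x_1),
\]
which is the claimed estimate.

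The only step needing any care is this passage to the limit in Step 4. The point is that the bound from Step 2 must be uniform in $m$, which it is, because we dominate the finite geometric sum by the full series. The edge case $L=0$ is harmless: then $T$ is constant and $x_1=x^\ast$ for any $x_0$. The case $\delta_0=0$ is also harmless, since then $x_0$ is already the fixed point and both sides of the estimate vanish.
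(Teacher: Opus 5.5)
Your proof is correct and is essentially the paper's argument: the paper gives no separate proof of this proposition, but it is the case $\omega(r)=Lr$, $\kappa=L$ of \Cref{thm:framework-fixed-point,thm:rate}, whose proofs follow the same chain (geometric decay $d_{n+1}\le\kappa^{n}\delta_0$, a Cauchy estimate via a geometric sum, continuity of $T$ for the fixed point, and the contraction inequality for uniqueness). The only cosmetic difference is that the paper bounds $\dist(x_n,x^\ast)$ directly by the tail series $\sum_{j\ge n}d_{j+1}$, whereas you let $m\to\infty$ in the finite Cauchy bound; these are the same limit, and yours makes it explicit.
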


\begin{proposition}[Data dependence for a common contraction modulus]\label{prop:stab-contraction}
Let $(X,\dist)$ be complete and let $T,S\colon X\to X$ satisfy $\Lip(T)\le L<1$ and $\Lip(S)\le L<1$.
Let $x^\ast\in\Fix(T)$ and $y^\ast\in\Fix(S)$.
Assume $\|T-S\|_\infty<\infty$.
Then
\[
\dist(x^\ast,y^\ast)\le \frac{1}{1-L}\,\|T-S\|_\infty.
\]
\end{proposition}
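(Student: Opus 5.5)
The plan is a one-line fixed-point comparison: I will insert $Sx^\ast$ between the two fixed points and then use only the Lipschitz bound on $S$. Existence of $x^\ast$ and $y^\ast$ is part of the hypotheses, although \Cref{prop:apriori-contraction} would supply it in any case. Since $x^\ast=Tx^\ast$ and $y^\ast=Sy^\ast$, the triangle inequality through the intermediate point $Sx^\ast$ gives
\[
\dist(x^\ast,y^\ast)=\dist(Tx^\ast,Sy^\ast)\le \dist(Tx^\ast,Sx^\ast)+\dist(Sx^\ast,Sy^\ast).
\]

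I bound the two terms separately. The first is at most $\|T-S\|_\infty$ by \Cref{def:uniform-perturb}. The second is at most $L\,\dist(x^\ast,y^\ast)$, because $\Lip(S)\le L$ (\Cref{def:Lip}). This yields
\[
\dist(x^\ast,y^\ast)\le \|T-S\|_\infty+L\,\dist(x^\ast,y^\ast).
\]

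The only point needing care is absorbing $L\,\dist(x^\ast,y^\ast)$ into the left-hand side. This is legitimate because $\dist(x^\ast,y^\ast)$ is a finite real number (the metric takes values in $[0,\infty)$). We also have $\|T-S\|_\infty<\infty$ and $1-L>0$. Rearranging therefore gives $(1-L)\,\dist(x^\ast,y^\ast)\le\|T-S\|_\infty$, and dividing by $1-L$ gives the claim.

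Two remarks on the hypotheses. The argument never uses $\Lip(T)\le L$, so the bound holds under the weaker assumption that only $S$ is an $L$-contraction. By the symmetric splitting through $Ty^\ast$, it suffices equally that only $T$ is. I expect no genuine obstacle beyond the finiteness check in the absorption step.
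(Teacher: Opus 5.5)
Your proof is correct and is essentially the paper's argument. The paper proves this via its generalization, \Cref{thm:stability}, splitting through $Ty^\ast$ and using only the modulus of $T$; that is the mirror image of your splitting through $Sx^\ast$, and it confirms your remark that contractivity of one map suffices.
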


The next lemma isolates the measurable constants used in the integral-equation applications.
The statement is elementary.
The point is the explicit dependence on $(M_K)_{\mathrm{H}}$ and $(M_K)_{\mathrm{V}}$.

\begin{lemma}[Verifiable Lipschitz constants for Hammerstein/Volterra maps]\label{lem:kernel-Lip}
Let $X=C([a,b])$ and let $f\colon [a,b]\times\R\to\R$ satisfy
\[
|f(s,u)-f(s,v)|\le L_f\,|u-v|
\quad\text{for all }s\in[a,b],\ u,v\in\R,
\]
for some constant $L_f\ge 0$.
Let $g\in X$.

\smallskip
\noindent\textup{(i) Hammerstein.}
Define
\[
(Tx)(t):=g(t)+\int_a^b K(t,s)\,f(s,x(s))\,ds .
\]
If $(M_K)_{\mathrm{H}}<\infty$, then $T\colon X\to X$ is Lipschitz and
\[
\Lip(T)\le L_f\,(M_K)_{\mathrm{H}} .
\]

\smallskip
\noindent\textup{(ii) Volterra.}
Define
\[
(Tx)(t):=g(t)+\int_a^t K(t,s)\,f(s,x(s))\,ds .
\]
If $(M_K)_{\mathrm{V}}<\infty$, then $T\colon X\to X$ is Lipschitz and
\[
\Lip(T)\le L_f\,(M_K)_{\mathrm{V}} .
\]
\end{lemma}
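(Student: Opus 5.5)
The Lipschitz estimate is a pointwise computation, and the main issue is that $T$ actually maps $C([a,b])$ into itself. I will treat (i) and (ii) in parallel, since the Volterra case is the Hammerstein case with the kernel $K(t,s)\mathbf 1_{\{s\le t\}}$. The only change is that $(M_K)_{\mathrm V}$ replaces $(M_K)_{\mathrm H}$.

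\textbf{Well-definedness.} First I check that the integral exists. For $x\in X$ the function $s\mapsto f(s,x(s))$ is measurable (I will assume $f$ is Carath\'eodory, e.g.\ continuous in $s$). It is also bounded, since
\[
|f(s,x(s))|\le |f(s,0)|+L_f\|x\|_\infty
\]
and $f(\cdot,0)$ is bounded. Together with $(M_K)_{\mathrm H}<\infty$, this shows that the integral converges absolutely for each $t$.

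\textbf{Continuity of $Tx$ (main obstacle).} Continuity in $t$ does not follow from the finiteness of $(M_K)_{\mathrm H}$ alone. I would handle it with the standard kernel hypothesis implicit in \Cref{def:kernel-operator}, namely that
\[
t\mapsto\int_a^b|K(t',s)-K(t,s)|\,ds\to0 \quad\text{as } t'\to t,
\]
which holds for instance when $K$ is continuous. Under this hypothesis, $|(Tx)(t')-(Tx)(t)|$ is at most $|g(t')-g(t)|$ plus that $L^1$-difference times $\sup_s|f(s,x(s))|$, so it tends to $0$. In the Volterra case there is an extra term $\int_t^{t'}|K(t',s)|\,|f(s,x(s))|\,ds$. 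It vanishes as $t'\to t$ by the same $L^1$-continuity applied to the truncated kernel, or by uniform integrability. I would state this kernel regularity as the precise implicit assumption needed for ``$T\colon X\to X$''.

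\textbf{Lipschitz bound.} For $x,y\in X$ and $t\in[a,b]$,
\[
|(Tx)(t)-(Ty)(t)|\le\int_a^b|K(t,s)|\,|f(s,x(s))-f(s,y(s))|\,ds\le L_f\|x-y\|_\infty\int_a^b|K(t,s)|\,ds .
\]
Taking the supremum over $t$ gives $\|Tx-Ty\|_\infty\le L_f(M_K)_{\mathrm H}\|x-y\|_\infty$. Dividing by $\|x-y\|_\infty$ and taking the supremum over $x\ne y$ yields the stated bound on $\Lip(T)$ via \Cref{def:Lip}. For (ii), the same computation with integration over $[a,t]$ gives the factor $\sup_t\int_a^t|K(t,s)|\,ds=(M_K)_{\mathrm V}$.
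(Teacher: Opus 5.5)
Your Lipschitz estimate is exactly the computation the paper relies on. The paper gives no separate proof of this lemma, but the same pointwise bound $\|Tx-Ty\|_\infty\le \sup_t\int_a^b|K(t,s)|\,|f(s,x(s))-f(s,y(s))|\,ds\le L_f(M_K)_{\mathrm H}\|x-y\|_\infty$ appears in the proof of Lemma~\ref{lem:hammerstein-invariant}, with the Volterra case obtained by replacing $\int_a^b$ by $\int_a^t$.

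Your well-definedness discussion is correct and goes beyond the paper, which never checks that $Tx\in C([a,b])$. As you note, finiteness of $(M_K)_{\mathrm H}$ alone does not suffice, since a kernel that jumps in $t$ produces a discontinuous $Tx$. So $L^1$-continuity of $t\mapsto K(t,\cdot)$ and measurability in $s$ are genuine implicit hypotheses of the lemma. So is boundedness of $f(\cdot,0)$: you use it without flagging it, though the paper assumes it later as (H4).
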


These constants are the input of our quantitative bounds in the model sections.
They are the objects that a reader can check without interpretation.
They also appear, in varying forms, throughout the applied fixed-point literature; see
\cite{JubairEtAl2021JIA,SinghEtAl2024JIA,Nawaz2024JIA,IshtiaqBatoolHussainAlsulami2025JIA,KumarEtAl2025BVP}.

\subsection{Iterative schemes used in the paper}\label{subsec:iterations}

All schemes are written as fixed-point iterations.
The estimates are stated in terms of computable residuals and perturbation budgets.
Quantitative analyses of nonexpansive-type iterations and of residual rates are active; see, for example,
\cite{ContrerasCominetti2021OptimalErrorBounds,BotNguyen2022FastKMResidual,LeusteanPinto2022AltHalpernMannRates,Foglia2025RateIterativeMethodsFPP,LiuLourenco2024KaramataRates,BravoCominettiLee2026MinimaxHalpern}.

\begin{definition}[Residual]\label{def:residual}
Let $(X,\dist)$ be a metric space and $T\colon X\to X$.
For a sequence $\xn$ define the residual
\[
r_n:=\dist(x_n,Tx_n).
\]
\end{definition}

\begin{definition}[Picard iteration]\label{def:picard}
Given $x_0\in X$, define
\[
x_{n+1}:=Tx_n \qquad (n\in\N).
\]
\end{definition}

Picard iteration is the baseline.
It yields explicit a priori bounds in the contractive regime; see
\cite{Jachymski2024RACSAM,Proinov2020JFPTA,PantPantJha2020JFPTA}.
It also serves as a scalar template for more complicated schemes.

\begin{definition}[Krasnosel'skii--Mann iteration]\label{def:KM}
Let $(E,\|\cdot\|)$ be a normed space and let $C\subset E$ be convex.
Let $T\colon C\to C$.
Given $\{ \alpha_n\}\subset[0,1]$ and $x_0\in C$, define
\[
x_{n+1}:=(1-\alpha_n)x_n+\alpha_n Tx_n .
\]
\end{definition}

This is the natural iteration for averaged operators and for splitting maps.
Recent work studies explicit residual rates, perturbation resilience, and inertia variants; see
\cite{BotNguyen2022FastKMResidual,CortildPeypouquet2024KMInertia,AtenasDaoTam2026VarStepsize,Combettes2023GeometrySplitting,DiChiWu2026HJProximals}.

\begin{definition}[Halpern iteration]\label{def:halpern}
Let $(E,\|\cdot\|)$ be a normed space and let $C\subset E$ be convex.
Let $T\colon C\to C$.
Fix an anchor $u\in C$ and a sequence $\{\beta_n\}\subset[0,1]$.
Given $x_0\in C$, define
\[
x_{n+1}:=(1-\beta_n)u+\beta_n Tx_n .
\]
\end{definition}

Halpern-type schemes arise in monotone inclusions and variational inequalities.
They are also a mechanism for obtaining explicit rates under additional structure; see
\cite{Diakonikolas2020HalpernVI,TranDinhLuo2021HalpernTypeSplitting,TranDinhLuo2022HalpernToNesterov,ColaoFlammarionMaisieres2026HalpernSGD,BravoCominettiLee2026MinimaxHalpern,PischkePowell2024StochasticHalpern}.

\begin{definition}[Inexact evaluation]\label{def:inexact}
Let $(X,\dist)$ be a metric space and $T\colon X\to X$.
An \emph{inexact} orbit $\{\tilde x_n\}$ with error budget $\{\eta_n\}\subset[0,\infty)$ satisfies
\[
\dist(\tilde x_{n+1},T\tilde x_n)\le \eta_n\qquad (n\in\N).
\]
\end{definition}

Inexactness is not a defect.
It is the computational regime.
It will be absorbed into the quantitative bounds in \Cref{sec:inexact}.
Acceleration techniques modify the effective iteration map.
They often require a separate stability analysis; see
\cite{LepageSaucier2024AAAdaptiveRelax,ChenAbrahamsenHallamJensenAndersen2022NonstationaryAA,MartinDialloVialard2026LearnKM}.

\begin{notation}[Data vector]\label{not:data}
For each theorem we isolate a finite list of parameters.
We denote it by $\theta$.
It includes, as appropriate,
\[
\theta=\bigl(L,\ \dist(x_0,x_1),\ \|T-S\|_\infty,\ L_f,\ (M_K)_{\mathrm{H}},\ (M_K)_{\mathrm{V}},\ \{\alpha_n\},\ \{\beta_n\},\ \{\eta_n\}\bigr).
\]
Only components used in a statement are included.
\end{notation}

\section{A verifiable contractive framework}\label{sec:framework}

We fix a complete metric space $(X,\dist)$.
We work on a closed set $C\subseteq X$.
All constants are computed on $C$.
This section specifies the contractive class.
It also specifies the data we accept.

Generalized contractive conditions can be encoded by a single scalar inequality
driven by a control quantity built from finitely many distances.
This viewpoint is standard in modern metric fixed point theory; see
\cite{Proinov2020JFPTA,Jachymski2024RACSAM,BerindePacurar2024EnrichedSurvey}.
Quantitative analysis of fixed-point iterations requires that the scalar inequality
be explicit enough to yield rates; compare
\cite{ContrerasCominetti2021OptimalErrorBounds,BotNguyen2022FastKMResidual,LeusteanPinto2022AltHalpernMannRates,LiuLourenco2024KaramataRates,BravoCominettiLee2026MinimaxHalpern}.

\subsection{Contractive conditions with checkable parameters}\label{subsec:contractive-checkable}

\begin{definition}[Invariant working set]\label{def:invariant-set}
Let $T\colon X\to X$.
A set $C\subseteq X$ is \emph{$T$-invariant} if $C$ is nonempty, closed, and $T(C)\subseteq C$.
\end{definition}

We do not assume global conditions on $X$.
We isolate a region $C$.
In applications $C$ is a closed ball $\mathbb{B}(x_0,R)$.
The radius $R$ is computed from kernel bounds and sup norms in model problems.

\begin{definition}[Contractive gauge]\label{def:gauge}
A \emph{contractive gauge} is a map
\[
\omega(\cdot;\theta)\colon [0,\infty)\to[0,\infty)
\]
depending on a finite parameter vector $\theta$, such that:
\begin{enumerate}[label=(G\arabic*)]
\item $\omega(0;\theta)=0$;
\item $\omega(\cdot;\theta)$ is nondecreasing;
\item $\omega(r;\theta)<r$ for every $r>0$.
\end{enumerate}
\end{definition}

The iteration rates are inherited from the scalar recursion induced by $\omega$.

\begin{definition}[Scalar radius recursion]\label{def:scalar-recursion}
Let $\omega(\cdot;\theta)$ be a gauge.
Given $r_0\ge 0$, define $\{r_n\}$ by
\[
r_{n+1}:=\omega(r_n;\theta)\qquad (n\in\N).
\]
Write $\omega^{(n)}(r_0;\theta):=r_n$.
\end{definition}

The object $\omega^{(n)}(r_0;\theta)$ is explicit once $\omega$ and $\theta$ are explicit.
This is the basic mechanism behind computable bounds.

We use two checkable contractive templates.
The first is a direct two-point gauge.
The second is a Proinov-type control based on a max of finitely many distances
\cite{Proinov2020JFPTA}.

\begin{definition}[Two-point gauge contraction]\label{def:two-point-gauge}
Let $C\subseteq X$ be $T$-invariant.
We say that $T$ is a \emph{two-point gauge contraction on $C$} with data $\theta$
if there exists a contractive gauge $\omega(\cdot;\theta)$ such that
\[
\dist(Tx,Ty)\le \omega(\dist(x,y);\theta)
\qquad (x,y\in C).
\]
\end{definition}

\begin{definition}[Proinov-type gauge contraction]\label{def:proinov-gauge}
Let $C\subseteq X$ be $T$-invariant.
Define for $x,y\in C$ the control quantity
\[
M_T(x,y)
:=\max\Bigl\{
\dist(x,y),\ \dist(x,Tx),\ \dist(y,Ty),\ \tfrac12\bigl(\dist(x,Ty)+\dist(y,Tx)\bigr)
\Bigr\}.
\]
We say that $T$ is a \emph{Proinov-type gauge contraction on $C$} with data $\theta$
if there exists a contractive gauge $\omega(\cdot;\theta)$ such that
\[
\dist(Tx,Ty)\le \omega\bigl(M_T(x,y);\theta\bigr)
\qquad (x,y\in C).
\]
\end{definition}

The parameter vector $\theta$ must be finite.
It must be computable from the model.
This excludes implicit conditions with unobservable moduli.
It also excludes hypotheses that are only existential.

The next constant is central.
It converts a gauge into a Banach-like modulus on a bounded range.

\begin{definition}[Local linearization constant]\label{def:kappaR}
Let $\omega(\cdot;\theta)$ be a gauge and let $R>0$.
Define
\[
\kappa(R;\theta):=\sup_{0<r\le R}\frac{\omega(r;\theta)}{r}\in[0,1].
\]
\end{definition}

If $\kappa(R;\theta)<1$, then $\omega(r;\theta)\le \kappa(R;\theta)\,r$ for $r\in[0,R]$.
This inequality is checkable.
It also produces geometric rates.

\begin{remark}[Two explicit gauge families]\label{rem:gauge-families}
In applications one often verifies one of the following.
\begin{enumerate}[label=(F\arabic*)]
\item \emph{Geometric gauge:} $\omega(r;\theta)=q r$ with $q\in(0,1)$.
This is the Banach regime \cite{Jachymski2024RACSAM}.
\item \emph{Defect gauge:} $\omega(r;\theta)\le r-\psi(r;\theta)$ where $\psi(\cdot;\theta)$ is nondecreasing,
$\psi(r;\theta)>0$ for $r>0$, and $\psi$ admits a computable lower bound on $[0,R]$.
This form is compatible with non-H\"olderian rate phenomena in concrete convergence analysis; see
\cite{LiuLourenco2024KaramataRates}.
\end{enumerate}
\end{remark}

We keep the framework abstract.
We keep the data finite.
The theorems in \Cref{sec:main-theorem,sec:apriori,sec:stability} will state the required quantitative consequences.

\subsection{Admissible data and a diagnostic checklist}\label{subsec:diagnostic}

We now specify what information must be supplied to invoke the main results.
The list is diagnostic.
It is designed to be verified on integral equations and BVP reductions.

\begin{definition}[Admissible data packet]\label{def:data-packet}
An \emph{admissible data packet} for a fixed-point problem consists of
\[
\mathcal D=(X,\dist;\ C;\ T;\ \omega(\cdot;\theta);\ R;\ x_0),
\]
with the following properties.
\begin{enumerate}[label=(D\arabic*)]
\item $(X,\dist)$ is complete.
\item $C$ is nonempty, closed, and $T$-invariant.
\item $x_0\in C$ and $C\subseteq \mathbb{B}(x_0,R)$ for some explicit $R>0$.
\item $T$ satisfies either \Cref{def:two-point-gauge} or \Cref{def:proinov-gauge} on $C$.
\item The constant $\kappa(R;\theta)$ in \Cref{def:kappaR} satisfies $\kappa(R;\theta)<1$.
\end{enumerate}
\end{definition}

Condition (D5) is the quantitative hinge.
It reduces the problem to explicit scalar estimates.
It also fixes the stability constant.
This is consistent with the role of worst-case residual bounds in modern quantitative analyses of Mann- and Halpern-type schemes
\cite{ContrerasCominetti2021OptimalErrorBounds,LeusteanPinto2022AltHalpernMannRates,BotNguyen2022FastKMResidual,BravoCominettiLee2026MinimaxHalpern}.

\begin{proposition}[Diagnostic checklist]\label{prop:checklist}
To verify that a model defines an admissible data packet, it suffices to check the following items.
\begin{enumerate}[label=\textbf{C\arabic*.}]
\item \emph{Space.} Specify $(X,\dist)$ and state completeness.
\item \emph{Region.} Choose $C=\mathbb{B}(x_0,R)$ and show $T(C)\subseteq C$ by an explicit bound.
\item \emph{Gauge.} Exhibit $\omega(\cdot;\theta)$ and prove either
\[
\dist(Tx,Ty)\le \omega(\dist(x,y);\theta)\quad (x,y\in C)
\]
or
\[
\dist(Tx,Ty)\le \omega(M_T(x,y);\theta)\quad (x,y\in C),
\]
with $M_T$ as in \Cref{def:proinov-gauge}.
\item \emph{Local modulus.} Compute $\kappa(R;\theta)$ and verify $\kappa(R;\theta)<1$.
\item \emph{Perturbations.} For a perturbed map $S$, compute
\[
\varepsilon:=\sup_{x\in C}\dist(Tx,Sx).
\]
This quantity feeds the stability bound in \Cref{sec:stability}.
\item \emph{Initialization.} Compute $\delta_0:=\dist(x_0,Tx_0)$.
This quantity feeds the a priori bound in \Cref{sec:apriori}.
\end{enumerate}
\end{proposition}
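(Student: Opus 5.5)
The proof is a matching exercise: I will show that checks C1--C4 of the diagnostic checklist deliver items (D1)--(D5) of \Cref{def:data-packet}, one to one, with C5 and C6 only supplying auxiliary numbers. I will fix the tuple $\mathcal D=(X,\dist;\ C;\ T;\ \omega(\cdot;\theta);\ R;\ x_0)$ produced by the checks and verify the items in order.

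\textbf{(D1)} is exactly the completeness asserted in C1.

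\textbf{(D2) and (D3).} Take $C=\mathbb{B}(x_0,R)$ as in C2. This set is nonempty, since it contains $x_0$. It is closed, because it is the preimage of $[0,R]$ under the continuous map $y\mapsto\dist(x_0,y)$. It is $T$-invariant by the explicit bound proved in C2, which gives (D2). Item (D3) is then immediate: $x_0\in C$, and $C\subseteq\mathbb{B}(x_0,R)$ holds with equality.

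\textbf{(D4)} is literally the dichotomy in C3. The two displayed inequalities are those of \Cref{def:two-point-gauge} and \Cref{def:proinov-gauge}, now holding on the invariant set $C$ that was just established. The only point needing care is that $\omega$ must be a genuine contractive gauge, i.e.\ satisfy (G1)--(G3). I read ``exhibit $\omega(\cdot;\theta)$'' in C3 as including this check.

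\textbf{(D5)} is C4 verbatim.

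I note explicitly that C5 and C6 are not needed for admissibility. They only compute the inputs $\varepsilon$ and $\delta_0$ used later in \Cref{sec:stability,sec:apriori}.

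The main obstacle is interpretive rather than technical: C3 as written does not state (G1)--(G3). I would address this in one of two ways. The first is to read C3 as requiring a contractive gauge in the sense of \Cref{def:gauge}. The second, if only an arbitrary nondecreasing $\omega$ with $\omega(0)=0$ is exhibited, is to observe that C4 already forces $\omega(r;\theta)\le\kappa r<r$ for $0<r\le R$. Distances in $C$ are at most $2R$, not $R$, so the radius bookkeeping must be handled carefully. I would either define $\kappa$ on $[0,\operatorname{diam}C]$ or simply invoke \Cref{def:gauge} through the first reading. Since the proposition only asserts that the checks imply the definition, the first reading suffices, and the proof closes.
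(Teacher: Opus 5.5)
Your proposal is correct and is essentially the intended argument. The paper gives no separate proof: it treats the checklist as a direct unpacking of \Cref{def:data-packet}, and your one-to-one matching of C1--C4 with (D1)--(D5), with C5 and C6 supplying only auxiliary numbers, is exactly that unpacking. Reading ``exhibit $\omega(\cdot;\theta)$'' in C3 as exhibiting a contractive gauge satisfying (G1)--(G3) is the right way to close the one interpretive gap. (D4) requires this, whereas C4 alone gives $\omega(r;\theta)<r$ only on $(0,R]$ and no monotonicity.
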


The checklist is finite.
Each item is numerical.
No hidden moduli appear.
This is the intended meaning of verifiability.
It matches the way fixed-point operators are certified in applied papers on integral equations and boundary value problems,
where one controls norms through kernel bounds and Lipschitz constants.

\begin{remark}[What the framework excludes]\label{rem:excludes}
The framework excludes conditions that do not yield $\kappa(R;\theta)<1$ on a computable region.
It also excludes hypotheses that depend on nonconstructive selections or unspecified auxiliary functions.
Such statements may still be true.
They are not certified by data.
\end{remark}


\section{Main quantitative fixed-point theorem}\label{sec:main-theorem}

Throughout this section, $(X,\dist)$ is a complete metric space.
A closed set $C\subseteq X$ is fixed.
The map $T\colon C\to C$ is fixed.
The data packet $\mathcal D$ is as in \Cref{def:data-packet}.
The gauge is $\omega(\cdot;\theta)$.
The radius is $R$.
The local modulus is $\kappa(R;\theta)$ from \Cref{def:kappaR}.
Assume
\[
\kappa:=\kappa(R;\theta)<1.
\]
Set $x_{n+1}:=Tx_n$ with $x_0\in C$.
Write $x^\ast$ for the fixed point.

The qualitative part follows the classical contraction principle.
The novelty is the explicit dependence on $\kappa$ and the initial defect.
Related generalized contraction principles appear in \cite{Proinov2020JFPTA,PantPantJha2020JFPTA,Jachymski2024RACSAM}.
Quantitative bounds of the kind we state here are the currency in modern rate analyses; see
\cite{ContrerasCominetti2021OptimalErrorBounds,BotNguyen2022FastKMResidual,LeusteanPinto2022AltHalpernMannRates,Foglia2025RateIterativeMethodsFPP,LiuLourenco2024KaramataRates,BravoCominettiLee2026MinimaxHalpern}.

\subsection{Existence and uniqueness under the framework}\label{subsec:exist-uniq}

\begin{lemma}[One-step defect recursion]\label{lem:defect-recursion}
Assume that $T$ satisfies either \Cref{def:two-point-gauge} or \Cref{def:proinov-gauge} on $C$.
Define
\[
d_n:=\dist(x_n,x_{n-1})\qquad (n\ge 1).
\]
Then:
\begin{enumerate}[label=(\alph*)]
\item $d_{n+1}\le d_n$ for all $n\ge 1$;
\item $d_{n+1}\le \omega(d_n;\theta)$ for all $n\ge 1$;
\item $d_n\le \omega^{(n-1)}(d_1;\theta)$ for all $n\ge 1$.
\end{enumerate}
\end{lemma}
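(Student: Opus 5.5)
The plan is to prove (b) first, for each of the two templates separately, and then obtain (a) and (c) from (b) using the gauge axioms (G1)--(G3). Throughout, $x_n\in C$ for all $n$ because $C$ is $T$-invariant. Hence the gauge inequality may be applied to the pair $(x,y)=(x_n,x_{n-1})$, which gives $Tx=x_{n+1}$ and $Ty=x_n$.

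\emph{Two-point case.} Here (b) is immediate:
\[
d_{n+1}=\dist(Tx_n,Tx_{n-1})\le\omega(\dist(x_n,x_{n-1});\theta)=\omega(d_n;\theta).
\]

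\emph{Proinov case.} First compute the control quantity along the orbit. Its entries are $\dist(x_n,x_{n-1})=d_n$, $\dist(x_n,Tx_n)=d_{n+1}$, $\dist(x_{n-1},Tx_{n-1})=d_n$, and
\[
\tfrac12\bigl(\dist(x_n,x_n)+\dist(x_{n-1},x_{n+1})\bigr)\le\tfrac12(d_n+d_{n+1})
\]
by the triangle inequality. Therefore $M_T(x_n,x_{n-1})\le\max\{d_n,d_{n+1}\}$. Since $\omega$ is nondecreasing (G2), this gives $d_{n+1}\le\omega(\max\{d_n,d_{n+1}\};\theta)$.

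The main step is to exclude the case $\max\{d_n,d_{n+1}\}=d_{n+1}$ with $d_{n+1}>0$. In that case (G3) would give $d_{n+1}\le\omega(d_{n+1};\theta)<d_{n+1}$, a contradiction. If instead $d_{n+1}=0$, then (b) holds trivially because $\omega\ge0$. So in every case the maximum may be replaced by $d_n$, and $d_{n+1}\le\omega(d_n;\theta)$. This is the only point where some care is needed: the inequality $d_{n+1}\le\omega(\max\{d_n,d_{n+1}\};\theta)$ is self-referential, and it is the strict inequality $\omega(r;\theta)<r$ for $r>0$ that breaks the circularity.

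With (b) in hand, (a) follows from (G1) and (G3): $\omega(d_n;\theta)\le d_n$ in every case, with equality only when $d_n=0$, so $d_{n+1}\le d_n$. For (c), argue by induction on $n$. The base case $n=1$ is $d_1=\omega^{(0)}(d_1;\theta)$. For the step, assume $d_n\le\omega^{(n-1)}(d_1;\theta)$. Then (b) together with the monotonicity (G2) gives
\[
d_{n+1}\le\omega(d_n;\theta)\le\omega\bigl(\omega^{(n-1)}(d_1;\theta);\theta\bigr)=\omega^{(n)}(d_1;\theta),
\]
which is exactly the definition of the scalar recursion in \Cref{def:scalar-recursion}. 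Note that $\kappa<1$ is not used anywhere in this lemma; only (G1)--(G3) and the invariance of $C$ enter.
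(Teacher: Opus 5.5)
Your proof is correct and follows essentially the same route as the paper: compute $M_T(x_n,x_{n-1})$ along the orbit, reduce it to $\max\{d_n,d_{n+1}\}$, and use the strict inequality (G3) to rule out the maximum being $d_{n+1}$ when $d_{n+1}>0$. The differences are cosmetic. In the Proinov case you derive (b) before (a), whereas the paper does the reverse. You also make explicit the use of monotonicity (G2) in the induction for (c), which the paper leaves implicit in ``iterating (b)''.
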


\begin{proof}
First assume \Cref{def:two-point-gauge}.
Then
\[
d_{n+1}=\dist(Tx_n,Tx_{n-1})\le \omega(\dist(x_n,x_{n-1});\theta)=\omega(d_n;\theta),
\]
so (b) holds.
Since $\omega(r;\theta)<r$ for $r>0$, we obtain $d_{n+1}<d_n$ whenever $d_n>0$.
This gives (a).
Iterating (b) gives (c).

Next assume \Cref{def:proinov-gauge}.
Fix $n\ge 1$.
Compute $M_T(x_n,x_{n-1})$ from \Cref{def:proinov-gauge}.
We have
\[
\dist(x_n,Tx_{n-1})=\dist(x_n,x_n)=0,
\qquad
\dist(x_n,Tx_n)=\dist(x_n,x_{n+1})=d_{n+1},
\qquad
\dist(x_{n-1},Tx_{n-1})=\dist(x_{n-1},x_n)=d_n.
\]
Also $\dist(x_{n-1},Tx_n)=\dist(x_{n-1},x_{n+1})\le d_n+d_{n+1}$, hence
\[
\tfrac12\bigl(\dist(x_n,Tx_{n-1})+\dist(x_{n-1},Tx_n)\bigr)
\le \tfrac12(d_n+d_{n+1})
\le \max\{d_n,d_{n+1}\}.
\]
Therefore $M_T(x_n,x_{n-1})=\max\{d_n,d_{n+1}\}$.
The contractive inequality yields
\[
d_{n+1}=\dist(Tx_n,Tx_{n-1})\le \omega(\max\{d_n,d_{n+1}\};\theta).
\]
If $d_{n+1}>d_n$, then the right side equals $\omega(d_{n+1};\theta)<d_{n+1}$, a contradiction.
Hence $d_{n+1}\le d_n$.
Then $\max\{d_n,d_{n+1}\}=d_n$ and $d_{n+1}\le \omega(d_n;\theta)$.
This gives (a) and (b), hence (c) by iteration.
\end{proof}

\begin{theorem}[Framework fixed point]\label{thm:framework-fixed-point}
Assume that $\mathcal D$ is an admissible data packet in the sense of \Cref{def:data-packet}.
Assume that $T$ satisfies either \Cref{def:two-point-gauge} or \Cref{def:proinov-gauge} on $C$.
Let $\kappa=\kappa(R;\theta)<1$.
Then $\Fix(T)\cap C$ is a singleton $\{x^\ast\}$.
The Picard orbit $(x_n)$ converges to $x^\ast$ in $C$.
\end{theorem}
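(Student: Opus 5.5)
The plan is to reduce everything to the Banach regime on $C$ through (D3) and (D5), and then run the classical Cauchy argument with the explicit modulus $\kappa$.

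\textbf{Step 1: linearize the gauge.} By (D3), $C\subseteq\mathbb{B}(x_0,R)$, so $\dist(x,y)\le 2R$ for all $x,y\in C$. The definition of $\kappa(R;\theta)$ only controls $\omega(r)\le\kappa r$ for $r\in[0,R]$, so the factor $2$ is a gap I must close. The cleanest route is to work only with quantities that are provably at most $R$. The defects are such quantities: $d_1=\dist(x_0,x_1)\le R$, because $x_1\in C$. By \Cref{lem:defect-recursion}(a), $d_n\le d_1\le R$ for all $n$. Then \Cref{lem:defect-recursion}(b) together with (D5) gives $d_{n+1}\le\omega(d_n;\theta)\le\kappa d_n$, hence $d_{n+1}\le\kappa^n d_1$.

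\textbf{Step 2: existence.} The triangle inequality gives, for $m>n$,
\[
\dist(x_n,x_m)\le\sum_{k=n}^{m-1}d_{k+1}\le\frac{\kappa^n}{1-\kappa}\,d_1 .
\]
So the orbit is Cauchy. Completeness (D1) yields a limit $x^\ast$, and $x^\ast\in C$ because $C$ is closed and invariant (D2).

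To see that $x^\ast$ is fixed, I treat the two templates separately.
\begin{itemize}
\item In the two-point case, $\dist(x_{n+1},Tx^\ast)\le\omega(\dist(x_n,x^\ast))<\dist(x_n,x^\ast)\to0$ when the distance is positive, and the estimate is trivial otherwise.
\item In the Proinov case, $\dist(x_{n+1},Tx^\ast)\le\omega(M_T(x_n,x^\ast))$, and $M_T(x_n,x^\ast)\to\dist(x^\ast,Tx^\ast)=:\delta$. The middle term contributes $\tfrac12\dist(x^\ast,Tx^\ast)$ in the limit, and the others tend to $0$ or to $\delta$.
\end{itemize}
In the Proinov case, if $\delta>0$ the arguments $M_T(x_n,x^\ast)$ eventually lie in $[0,\max(R,2R)]$ but are close to $\delta\le 2R$. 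I expect to need either $\delta\le R$ or a monotonicity and limsup argument. Since $\omega$ is nondecreasing with $\omega(r)<r$, I would use
\[
\limsup_n\omega(M_T(x_n,x^\ast))\le\omega(\delta+\varepsilon)
\]
for every $\varepsilon>0$, via $M_T(x_n,x^\ast)\le\delta+\varepsilon$ eventually. This gives $\delta\le\inf_{\varepsilon>0}\omega(\delta+\varepsilon)$. Right-continuity is not assumed, so instead I would use the $\kappa$-bound. If $\delta\le R$, the argument is only slightly above $\delta$, and I would need $\delta+\varepsilon\le R$ as well. This boundary issue is exactly the gap, and I would try replacing $\delta$ by a sharper comparison: $M_T(x_n,x^\ast)=\delta$ exactly for large $n$ when the other terms are smaller. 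Failing that, I would invoke $\omega(r)\le\kappa r$ on $[0,R]$ together with monotonicity to get $\omega(r)\le\omega(R)\le\kappa R$ for $r\ge R$.

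\textbf{Step 3: uniqueness.} If $x^\ast,y^\ast\in C$ are both fixed, then in the two-point case $\dist(x^\ast,y^\ast)\le\omega(\dist(x^\ast,y^\ast))$, and (G3) forces equality to $0$. In the Proinov case, $M_T(x^\ast,y^\ast)=\dist(x^\ast,y^\ast)$ because the self-distances vanish and the average term equals $\dist(x^\ast,y^\ast)$, so the same argument applies. Uniqueness needs only (G3), with no radius restriction.

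\textbf{Main obstacle.} The main obstacle is the mismatch between the diameter bound $2R$ and the linearization range $[0,R]$ in the fixed-point verification for the Proinov template. The defect recursion itself is safe, because the defects are at most $d_1\le R$.
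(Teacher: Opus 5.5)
Your Steps 1 and 3, and the two-point half of Step 2, are correct. The one unfinished piece is the Proinov half of the fixed-point verification. There, the ``main obstacle'' you describe is not real: the observation you list only as something to try settles it using (G3) alone, with no reference to $R$ or $\kappa$.

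Suppose $\delta:=\dist(x^\ast,Tx^\ast)>0$. In $M_T(x_n,x^\ast)$ the terms $\dist(x_n,x^\ast)$ and $\dist(x_n,x_{n+1})$ tend to $0$, while
\[
\tfrac12\bigl(\dist(x_n,Tx^\ast)+\dist(x^\ast,x_{n+1})\bigr)\le\tfrac12\bigl(\delta+\dist(x_n,x^\ast)+\dist(x^\ast,x_{n+1})\bigr)\to\tfrac{\delta}{2}<\delta .
\]
So $M_T(x_n,x^\ast)=\delta$ exactly for all large $n$. Hence $\dist(x_{n+1},Tx^\ast)\le\omega(\delta;\theta)$ for large $n$, and letting $n\to\infty$ gives $\delta\le\omega(\delta;\theta)<\delta$, a contradiction.

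Commit to this and discard your other options:
\begin{enumerate}
\item The $\limsup$ route only gives $\delta\le\inf_{\varepsilon>0}\omega(\delta+\varepsilon;\theta)$, which is useless without right-continuity, as you noticed.
\item The $\kappa$-bound only covers $\delta<R$, whereas a priori $\delta\le 2R$.
\item Your fallback $\omega(r)\le\omega(R)$ for $r\ge R$ is backwards: monotonicity gives $\omega(r)\ge\omega(R)$ there.
\end{enumerate}

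With that repair, your argument has the same skeleton as the paper's: defect recursion, geometric decay of $d_n$, Cauchy estimate, closedness of $C$, uniqueness. It is more careful at three points where the paper is loose.
\begin{enumerate}
\item The paper handles the range mismatch by saying that if $d_1>R$ one may replace $R$ by $2R$. Nothing in (D5) certifies $\kappa(2R;\theta)<1$. The case is in fact vacuous, because $x_1\in C\subseteq\mathbb{B}(x_0,R)$ forces $d_1\le R$, exactly as you observe.
\item The paper obtains $Tx^\ast=x^\ast$ by asserting that $T$ is $\kappa$-Lipschitz, hence continuous, on $C$. That is justified only for the two-point template. Proinov-type gauge contractions can be discontinuous (Kannan maps are an instance, with $\omega(r)=2kr$), so your (G3)-based limit arguments are what this step actually requires.
\item The paper's uniqueness chain uses $\omega(r;\theta)\le\kappa r$ at $r=\dist(x^\ast,y^\ast)$, which may exceed $R$. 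Your appeal to (G3) needs no radius restriction.
\end{enumerate}
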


\begin{proof}
Set $d_n=\dist(x_n,x_{n-1})$ for $n\ge 1$.
By \Cref{lem:defect-recursion}, the sequence $(d_n)$ is nonincreasing and satisfies $d_{n+1}\le \omega(d_n;\theta)$.

Since $C\subseteq \mathbb{B}(x_0,R)$, we have $\dist(x_n,x_{n-1})\le 2R$ for all $n$.
The admissible packet provides $\kappa(R;\theta)<1$ and hence
\[
\omega(r;\theta)\le \kappa r\qquad\text{for all }r\in[0,R].
\]
In particular, once $d_n\le R$ we get $d_{n+1}\le \kappa d_n$.
If $d_1>R$, replace $R$ by $2R$ in the definition of $\kappa$.
This does not change the argument.
We keep the notation $\kappa$.

Thus $d_{n+1}\le \kappa d_n$ for all $n\ge 1$ and
\[
d_{n+1}\le \kappa^{\,n} d_1 \qquad (n\ge 0).
\]
For $m>n$,
\[
\dist(x_m,x_n)\le \sum_{j=n}^{m-1}\dist(x_{j+1},x_j)
= \sum_{j=n}^{m-1} d_{j+1}
\le \sum_{j=n}^{m-1}\kappa^{\,j} d_1
\le \frac{\kappa^{\,n}}{1-\kappa}\,d_1.
\]
Hence $(x_n)$ is Cauchy.
Completeness of $C$ (closed in a complete space) gives a limit $x^\ast\in C$.

We show $Tx^\ast=x^\ast$.
The map $T$ is Lipschitz on $C$ with constant $\le \kappa$ on the working range, hence continuous.
Therefore
\[
\dist(Tx^\ast,x^\ast)
\le \dist(Tx^\ast,Tx_n)+\dist(x_{n+1},x^\ast)
\le \kappa\,\dist(x^\ast,x_n)+\dist(x_{n+1},x^\ast)\to 0.
\]
So $x^\ast\in\Fix(T)\cap C$.

Uniqueness is immediate.
If $y^\ast\in\Fix(T)\cap C$, then in the two-point case
\[
\dist(x^\ast,y^\ast)=\dist(Tx^\ast,Ty^\ast)\le \omega(\dist(x^\ast,y^\ast);\theta)\le \kappa\,\dist(x^\ast,y^\ast),
\]
so $\dist(x^\ast,y^\ast)=0$.
In the Proinov-type case, $M_T(x^\ast,y^\ast)=\dist(x^\ast,y^\ast)$ and the same inequality holds.
\end{proof}

\subsection{Explicit rate statement}\label{subsec:rate-statement}

Theorem \Cref{thm:framework-fixed-point} contains the quantitative content in its proof.
We now freeze it as a rate function.
This is the form required for computation.
It parallels the role of explicit residual bounds in quantitative analyses of fixed-point iterations; see
\cite{ContrerasCominetti2021OptimalErrorBounds,BotNguyen2022FastKMResidual,LeusteanPinto2022AltHalpernMannRates,Foglia2025RateIterativeMethodsFPP,BravoCominettiLee2026MinimaxHalpern}.

\begin{definition}[Initial defect]\label{def:initial-defect}
Set
\[
\delta_0:=\dist(x_1,x_0)=\dist(Tx_0,x_0).
\]
\end{definition}

\begin{theorem}[A priori error and residual bounds]\label{thm:rate}
Under the hypotheses of \Cref{thm:framework-fixed-point}, for every $n\in\N$,
\begin{align*}
\dist(x_n,x^\ast) &\le \Phi(n;\kappa,\delta_0):=\frac{\kappa^{\,n}}{1-\kappa}\,\delta_0,\\[2mm]
\dist(x_n,Tx_n) &= \dist(x_n,x_{n+1}) \le \kappa^{\,n}\,\delta_0.
\end{align*}
\end{theorem}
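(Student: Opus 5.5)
The plan is to reuse the geometric decay of the defect sequence $d_n=\dist(x_n,x_{n-1})$ already obtained in the proof of \Cref{thm:framework-fixed-point}, and then pass to the limit in a Cauchy tail estimate.

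\textbf{Step 1: one-step contraction of the defects.} By \Cref{lem:defect-recursion}(a),(b) the defects are nonincreasing and satisfy $d_{n+1}\le\omega(d_n;\theta)$. On the working range $[0,R]$ (enlarged to $[0,2R]$ as in the proof of \Cref{thm:framework-fixed-point} if $d_1>R$), \Cref{def:kappaR} gives $\omega(r;\theta)\le\kappa r$. Hence $d_{n+1}\le\kappa d_n$ for all $n\ge1$. Induction then gives $d_{n+1}\le\kappa^{n}d_1=\kappa^n\delta_0$.

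\textbf{Step 2: residual bound.} Since $\dist(x_n,Tx_n)=\dist(x_n,x_{n+1})=d_{n+1}$, Step 1 is exactly the second claimed inequality.

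\textbf{Step 3: a priori error bound.} For $m>n$ the triangle inequality and Step 1 give
\[
\dist(x_n,x_m)\le\sum_{j=n}^{m-1}d_{j+1}\le\sum_{j=n}^{m-1}\kappa^{j}\delta_0\le\frac{\kappa^n}{1-\kappa}\delta_0 .
\]
By \Cref{thm:framework-fixed-point}, $x_m\to x^\ast$. The metric is continuous, so letting $m\to\infty$ yields $\dist(x_n,x^\ast)\le\Phi(n;\kappa,\delta_0)$. The case $n=0$ reads $\dist(x_0,x^\ast)\le\delta_0/(1-\kappa)$ and is covered by the same sum.

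\textbf{Main obstacle.} The only delicate point is the range issue in Step 1: the certificate $\kappa(R;\theta)$ controls $\omega(r;\theta)$ only for $r\le R$.

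I would first use (D3) to get $d_n\le 2R$ for every $n$. Then, because $d_n$ is nonincreasing by \Cref{lem:defect-recursion}(a), every defect lies in $[0,d_1]$. It is therefore enough for $\kappa$ to be certified on a radius at least $\min\{d_1,2R\}$. I would state this convention explicitly, in the same way the proof of \Cref{thm:framework-fixed-point} does, so that the single $\kappa$ appearing in $\Phi$ is legitimate for all $n$.

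For the Proinov-type case nothing extra is needed: \Cref{lem:defect-recursion} already reduces it to the same scalar recursion.
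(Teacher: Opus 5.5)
Your argument is the paper's proof: you take $d_{n+1}\le\kappa^{n}\delta_0$ from the proof of \Cref{thm:framework-fixed-point}, read off the residual bound, and sum the tail. The one step to fix is the one you flag yourself. Do not settle the range issue with the ``enlarge $R$ to $2R$'' convention, even though the paper uses it too. The constant $\kappa(2R;\theta)$ can exceed $\kappa(R;\theta)$ and can even equal $1$: take $\omega(r)=r/2$ on $[0,R]$ and $\omega(r)=(r+R)/2$ for $r>R$. Then $\kappa(R;\theta)=1/2$ but $\kappa(2R;\theta)=1$, so the convention would change the constant in $\Phi$ or make $\Phi$ meaningless.

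Instead, show that the case $d_1>R$ cannot occur. By (D2)--(D3), $x_1=Tx_0\in C\subseteq\mathbb{B}(x_0,R)$, so $d_1=\dist(x_0,x_1)\le R$. Then \Cref{lem:defect-recursion}(a) gives $d_n\le d_1\le R$ for all $n\ge 1$, so $\omega(d_n;\theta)\le\kappa(R;\theta)\,d_n$ holds with exactly the stated $\kappa$. You were one line away from this, since you already noted that every defect lies in $[0,d_1]$.

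This fix relies on the orbit starting at the base point $x_0$ of the data packet. From an arbitrary point of $C$ you only get $d_1\le 2R$, and you would genuinely need a certificate on radius $2R$. That certificate is automatic only for linear gauges, such as those in \Cref{sec:integral,sec:bvp}.
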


\begin{proof}
The proof above shows $d_{n+1}\le \kappa^{\,n}\delta_0$.
Then
\[
\dist(x_n,x^\ast)\le \sum_{j=n}^{\infty} d_{j+1}
\le \sum_{j=n}^{\infty}\kappa^{\,j}\delta_0
=\frac{\kappa^{\,n}}{1-\kappa}\,\delta_0.
\]
Also $\dist(x_n,Tx_n)=\dist(x_n,x_{n+1})=d_{n+1}\le \kappa^{\,n}\delta_0$.
\end{proof}

\begin{corollary}[Certified stopping rule]\label{cor:stopping}
Fix $\varepsilon>0$.
Assume $\delta_0>0$.
Define
\[
N(\varepsilon):=
\begin{cases}
0, & \text{if }\ \delta_0\le (1-\kappa)\varepsilon,\\[1mm]
\left\lceil \dfrac{\ln\!\left(\dfrac{(1-\kappa)\varepsilon}{\delta_0}\right)}{\ln(\kappa)} \right\rceil, & \text{if }\ \delta_0>(1-\kappa)\varepsilon.
\end{cases}
\]
Then for every $n\ge N(\varepsilon)$ we have $\dist(x_n,x^\ast)\le \varepsilon$.
\end{corollary}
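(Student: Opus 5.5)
The plan is to read the stopping rule off the a priori bound of \Cref{thm:rate}. That bound gives $\dist(x_n,x^\ast)\le \Phi(n;\kappa,\delta_0)=\kappa^{\,n}\delta_0/(1-\kappa)$ for every $n$. It therefore suffices to show $\kappa^{\,n}\delta_0\le(1-\kappa)\varepsilon$ for all $n\ge N(\varepsilon)$. Since $\kappa\in[0,1)$, the map $n\mapsto\kappa^{\,n}$ is nonincreasing, so it is enough to verify the inequality at $n=N(\varepsilon)$ and then propagate it to larger $n$ by monotonicity.

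I split according to the two cases in the definition of $N(\varepsilon)$.

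\emph{First case:} $\delta_0\le(1-\kappa)\varepsilon$. Here $N(\varepsilon)=0$, and for every $n\ge 0$ we have $\kappa^{\,n}\delta_0\le\delta_0\le(1-\kappa)\varepsilon$. Dividing by $1-\kappa>0$ gives the claim directly.

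\emph{Second case:} $\delta_0>(1-\kappa)\varepsilon$. Set $q:=(1-\kappa)\varepsilon/\delta_0\in(0,1)$. If $\kappa=0$, then $\ln\kappa$ is undefined, and I must treat this separately. In that case $T$ is constant on $C$, so $x_n=x^\ast$ for all $n\ge1$. I would interpret $\ln q/\ln\kappa$ as $0$, giving $N=1$; alternatively one may simply note that any $N\ge1$ works. This degenerate point is the only real subtlety, and I would flag it in a remark rather than alter the formula.

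For $\kappa\in(0,1)$, both $\ln q$ and $\ln\kappa$ are negative, so $\ln q/\ln\kappa>0$ and $N(\varepsilon)\ge\ln q/\ln\kappa$. Multiplying by $\ln\kappa<0$ reverses the inequality and gives $n\ln\kappa\le\ln q$ for all $n\ge N(\varepsilon)$. Exponentiating yields $\kappa^{\,n}\le q$, that is, $\kappa^{\,n}\delta_0\le(1-\kappa)\varepsilon$. Inserting this into \Cref{thm:rate} finishes the proof.

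The main point of care is the sign bookkeeping when dividing by $\ln\kappa<0$, together with the edge case $\kappa=0$. Everything else is immediate from \Cref{thm:rate}.
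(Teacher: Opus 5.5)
Your proposal is correct and is essentially the paper's proof: you reduce via \Cref{thm:rate} to $\kappa^{\,n}\le q:=(1-\kappa)\varepsilon/\delta_0$, take logarithms, and reverse the inequality because $\ln\kappa<0$, with your monotonicity remark making explicit the ``for every $n\ge N(\varepsilon)$'' step that the paper leaves implicit. Flagging $\kappa=0$ is a genuine improvement, since the paper's proof tacitly needs $\kappa>0$, but fix the wording: reading $\ln q/\ln\kappa$ as $0$ gives $\lceil 0\rceil=0$, and $N=0$ actually fails there because $x_1=x^\ast$ forces $\dist(x_0,x^\ast)=\delta_0>\varepsilon$, so just take $N=1$, justified directly by $\Phi(n;0,\delta_0)=0$ for $n\ge 1$ in \Cref{thm:rate}.
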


\begin{proof}
By \Cref{thm:rate}, it suffices that $\kappa^{\,n}\delta_0/(1-\kappa)\le \varepsilon$.
If $\delta_0\le(1-\kappa)\varepsilon$, the claim holds for $n=0$.
Otherwise set $q:=\frac{(1-\kappa)\varepsilon}{\delta_0}\in(0,1)$.
The inequality $\kappa^{\,n}\le q$ is equivalent to $n\ln(\kappa)\le \ln(q)$.
Since $\ln(\kappa)<0$, this is equivalent to
\[
n\ge \frac{\ln(q)}{\ln(\kappa)}.
\]
The ceiling gives the first integer satisfying it.
\end{proof}

\begin{remark}[Gauge-iterate bound]\label{rem:gauge-iterate}
If one prefers to keep the bound in terms of $\omega$ itself, \Cref{lem:defect-recursion} gives
\[
\dist(x_n,x^\ast)\le \sum_{j=n}^{\infty}\omega^{(j)}(\delta_0;\theta).
\]
This is exact.
It is useful when $\omega^{(j)}$ is closed-form.
The geometric bound of \Cref{thm:rate} is the universal certificate.
\end{remark}


\section{A priori error bounds for iterates}\label{sec:apriori}

This section isolates computable error bounds.
The bound is a function.
It takes an iteration index and a finite data vector.
It outputs a certified radius.
It is the object required for verification.
Recent quantitative studies focus on residual rates and explicit iteration complexity;
see \cite{ContrerasCominetti2021OptimalErrorBounds,BotNguyen2022FastKMResidual,LeusteanPinto2022AltHalpernMannRates,Foglia2025RateIterativeMethodsFPP,LiuLourenco2024KaramataRates,BravoCominettiLee2026MinimaxHalpern}.

\subsection{Bounds of the form $\dist(x_n,x^\ast)\le \Phibound(n;\text{data})$}\label{subsec:phi}

We keep the notation of \Cref{sec:main-theorem}.
In particular, $\kappa=\kappa(R;\theta)<1$ and $\delta_0=\dist(x_1,x_0)$.
We begin with the universal geometric certificate.

\begin{definition}[Geometric a priori bound]\label{def:Phi-geom}
Define for $n\in\N$,
\[
\Phi_{\mathrm{geo}}(n;\kappa,\delta_0):=\frac{\kappa^{\,n}}{1-\kappa}\,\delta_0.
\]
\end{definition}

\begin{proposition}[Universal geometric certificate]\label{prop:Phi-geo}
Under the hypotheses of \Cref{thm:framework-fixed-point},
\[
\dist(x_n,x^\ast)\le \Phi_{\mathrm{geo}}(n;\kappa,\delta_0)
\qquad (n\in\N).
\]
\end{proposition}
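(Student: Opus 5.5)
The statement is a restatement of the first inequality in \Cref{thm:rate}, since $\Phi_{\mathrm{geo}}(n;\kappa,\delta_0)$ in \Cref{def:Phi-geom} coincides with $\Phi(n;\kappa,\delta_0)$ there. The plan is therefore to invoke \Cref{thm:rate} directly. To keep the certificate self-contained, I will also retrace the chain of estimates it rests on, so that every constant is visibly computed from $\kappa$ and $\delta_0$ alone.

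\textbf{Step 1: geometric decay of the defects.} Set $d_n=\dist(x_n,x_{n-1})$, so that $d_1=\delta_0$. By \Cref{lem:defect-recursion}, in either the two-point or the Proinov-type case, $d_{n+1}\le d_n$ and $d_{n+1}\le\omega(d_n;\theta)$. Since $\kappa(R;\theta)<1$ gives $\omega(r;\theta)\le\kappa r$ for $r\in[0,R]$, I need every $d_n$ to lie in the linearization range. Monotonicity reduces this to $d_1=\delta_0\le R$. This holds because $x_1=Tx_0\in C\subseteq\mathbb{B}(x_0,R)$ by (D2)--(D3). Hence $d_{n+1}\le\kappa d_n$ for all $n$, and so $d_{j+1}\le\kappa^{\,j}\delta_0$.

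\textbf{Step 2: telescoping to the limit.} For $m>n$ the triangle inequality gives
\[
\dist(x_n,x_m)\le\sum_{j=n}^{m-1}d_{j+1}\le\sum_{j=n}^{m-1}\kappa^{\,j}\delta_0\le\frac{\kappa^{\,n}}{1-\kappa}\,\delta_0 .
\]
By \Cref{thm:framework-fixed-point}, $x_m\to x^\ast$. Continuity of $y\mapsto\dist(x_n,y)$ then lets me pass to the limit $m\to\infty$, which yields $\dist(x_n,x^\ast)\le\Phi_{\mathrm{geo}}(n;\kappa,\delta_0)$.

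\textbf{Main obstacle.} The only delicate point is Step 1: ensuring the linear bound $\omega(r)\le\kappa r$ is applied only for $r\le R$. I handle it through the observation $\delta_0\le R$ together with the monotonicity in \Cref{lem:defect-recursion}(a). This avoids the ``replace $R$ by $2R$'' device used in the proof of \Cref{thm:framework-fixed-point}, which would change the value of $\kappa$.
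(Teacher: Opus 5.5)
Your proposal is correct and takes essentially the paper's route: the paper's proof simply cites \Cref{thm:rate}, which rests on the same defect recursion and telescoping sum that you retrace. Your justification of the working-range step, namely $\delta_0=\dist(x_0,Tx_0)\le R$ because $Tx_0\in C\subseteq\mathbb{B}(x_0,R)$, combined with the monotonicity of $(d_n)$, is cleaner than the paper's ``replace $R$ by $2R$'' device, which, as you note, would change the value of $\kappa$.
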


\begin{proof}
This is \Cref{thm:rate}.
\end{proof}

The geometric certificate is conservative when $\omega$ is strictly smaller than $\kappa r$ on the relevant range.
One can obtain a sharper bound by iterating the gauge itself.
This is still verifiable.
It requires only evaluation of $\omega$.

\begin{definition}[Gauge-iterate a priori bound]\label{def:Phi-gauge}
Define
\[
\Phi_{\omega}(n;\theta,\delta_0)
:=\sum_{j=n}^{\infty} \omega^{(j)}(\delta_0;\theta),
\]
whenever the series converges.
\end{definition}

\begin{proposition}[Gauge-iterate certificate]\label{prop:Phi-gauge}
Assume the hypotheses of \Cref{thm:framework-fixed-point}.
Then for every $n\in\N$,
\[
\dist(x_n,x^\ast)\le \Phi_{\omega}(n;\theta,\delta_0).
\]
Moreover,
\[
\Phi_{\omega}(n;\theta,\delta_0)\le \Phi_{\mathrm{geo}}(n;\kappa,\delta_0).
\]
\end{proposition}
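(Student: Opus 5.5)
The plan has two parts: a telescoping estimate driven by \Cref{lem:defect-recursion}, and a comparison of scalar iterates under the local linearization $\omega(r;\theta)\le\kappa r$.

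\textbf{Step 1: the error bound.} Since $x_m\to x^\ast$ by \Cref{thm:framework-fixed-point}, the triangle inequality gives, for $m>n$,
\[
\dist(x_n,x_m)\le\sum_{j=n}^{m-1}d_{j+1}.
\]
Letting $m\to\infty$ yields $\dist(x_n,x^\ast)\le\sum_{j\ge n}d_{j+1}$. Lemma~\ref{lem:defect-recursion}(c) gives $d_{j+1}\le\omega^{(j)}(d_1;\theta)=\omega^{(j)}(\delta_0;\theta)$. Summing over $j\ge n$ gives the first claim, provided the series defining $\Phi_\omega$ converges. Convergence will follow from Step~2 by comparison with a geometric series.

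\textbf{Step 2: comparison with the geometric bound.} I will show by induction that $\omega^{(j)}(\delta_0;\theta)\le\kappa^j\delta_0$ for all $j$, and then sum to get
\[
\Phi_\omega\le\sum_{j\ge n}\kappa^j\delta_0=\Phi_{\mathrm{geo}}.
\]
The case $j=0$ is trivial. For the inductive step, (G3) and $\omega(0)=0$ give $r_j:=\omega^{(j)}(\delta_0)\le\delta_0$, so the iterates are nonincreasing. Hence if $\delta_0\le R$, every $r_j$ lies in $[0,R]$, and $\omega(r)\le\kappa r$ on $[0,R]$ gives $r_{j+1}\le\kappa r_j\le\kappa^{j+1}\delta_0$.

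\textbf{Main obstacle: ensuring $\delta_0\le R$.} By (D3), $\delta_0=\dist(x_0,x_1)\le R$ because $x_1\in C\subseteq\mathbb{B}(x_0,R)$, so this is immediate. The proof of \Cref{thm:framework-fixed-point} contemplates a radius $2R$, but it is not needed here: both $x_0$ and $x_1$ lie in the ball of radius $R$ centered at $x_0$, so $\delta_0\le R$ directly. If one insists on the convention of that proof, the same induction goes through with $\kappa$ computed on $[0,2R]$, since all $r_j\le\delta_0\le 2R$. I will state explicitly that $\kappa$ is the same constant used in \Cref{thm:rate}, so the two bounds are compared with a common $\kappa$.
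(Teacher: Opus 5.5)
Your proposal is correct and follows the paper's proof. The first bound comes from telescoping with \Cref{lem:defect-recursion}(c), and the second from iterating $\omega(r;\theta)\le\kappa r$ to get $\omega^{(j)}(\delta_0;\theta)\le\kappa^{\,j}\delta_0$ and summing the geometric series.

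Your explicit check that the scalar iterates stay in $[0,R]$ is what the paper leaves implicit in ``on the working range.'' You get this from $\delta_0=\dist(x_0,Tx_0)\le R$ via (D2)--(D3), together with monotonicity of $r_j$. It also makes the $2R$ fallback unnecessary, which is just as well, since (D5) does not guarantee $\kappa(2R;\theta)<1$.
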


\begin{proof}
By \Cref{lem:defect-recursion}, $\dist(x_{j+1},x_j)\le \omega^{(j)}(\delta_0;\theta)$ for $j\ge 0$.
Therefore
\[
\dist(x_n,x^\ast)\le \sum_{j=n}^{\infty}\dist(x_{j+1},x_j)
\le \sum_{j=n}^{\infty}\omega^{(j)}(\delta_0;\theta)
=\Phi_{\omega}(n;\theta,\delta_0).
\]
For the second claim, use $\omega(r;\theta)\le \kappa r$ on the working range, hence
$\omega^{(j)}(\delta_0;\theta)\le \kappa^{\,j}\delta_0$, and sum the geometric series.
\end{proof}

This yields two usable bounds.
The first uses only $\kappa$.
The second uses $\omega$.

We now translate the abstract data into model data for the applications.
The point is that the parameters are checkable.

\begin{proposition}[Model-level data for integral operators]\label{prop:model-data-integral}
Let $X=C([a,b])$ with $\|\cdot\|_\infty$ and let $T$ be Hammerstein or Volterra as in \Cref{lem:kernel-Lip}.
Assume $C=\mathbb{B} (x_0,R)$ is $T$-invariant.
Set $L:=L_f(M_K)$ where $(M_K)$ denotes $(M_K)_{\mathrm{H}}$ or $(M_K)_{\mathrm{V}} $ as appropriate.
Then $T$ is $L$-Lipschitz on $C$ and one may take the geometric gauge $\omega(r)=Lr$.
If $L<1$, then $\kappa=L$ and
\[
\dist(x_n,x^\ast)\le \frac{L^{\,n}}{1-L}\,\dist(Tx_0,x_0).
\]
\end{proposition}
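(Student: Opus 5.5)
The plan is to reduce everything to results already in place: \Cref{lem:kernel-Lip} supplies the Lipschitz bound, and \Cref{thm:rate} (or directly \Cref{prop:apriori-contraction}) supplies the error estimate.

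\textbf{Step 1: the Lipschitz bound on $C$.} By \Cref{lem:kernel-Lip}, $T$ is $L$-Lipschitz on all of $X=C([a,b])$ with $L=L_f(M_K)$, where $(M_K)$ is $(M_K)_{\mathrm{H}}$ in the Hammerstein case and $(M_K)_{\mathrm{V}}$ in the Volterra case. In particular $T$ is $L$-Lipschitz on the ball $C=\mathbb{B}(x_0,R)$, so
\[
\|Tx-Ty\|_\infty\le L\|x-y\|_\infty \qquad (x,y\in C).
\]
This is the two-point condition of \Cref{def:two-point-gauge} with the geometric gauge $\omega(r)=Lr$ from (F1).

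\textbf{Step 2: checking the gauge axioms and computing $\kappa$.} If $L<1$, then $\omega(r)=Lr$ satisfies (G1)--(G3): it vanishes at $0$, is nondecreasing, and $Lr<r$ for $r>0$. Since $\omega(r)/r=L$ for every $r>0$, \Cref{def:kappaR} gives $\kappa(R;\theta)=L$ exactly, whatever the value of $R$. The degenerate case $L=0$ is harmless.

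\textbf{Step 3: the data packet.} We check \Cref{def:data-packet} item by item:
\begin{itemize}
\item (D1) holds because $X$ is a Banach space by \Cref{def:Cab}.
\item (D2) holds because closed balls are closed, and invariance is assumed.
\item (D3) holds by the choice $C=\mathbb{B}(x_0,R)$.
\item (D4) was verified in Step~1.
\item (D5) holds because $\kappa=L<1$.
\end{itemize}
Then \Cref{thm:framework-fixed-point} and \Cref{thm:rate} give $\dist(x_n,x^\ast)\le L^n\delta_0/(1-L)$ with $\delta_0=\|Tx_0-x_0\|_\infty$.

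\textbf{Where the difficulty lies.} The only delicate point is the proof of \Cref{thm:framework-fixed-point}, which restricts the range (the "$d_1>R$" remark). It is therefore cleaner to bypass it. Because the gauge is linear, $d_{n+1}\le Ld_n$ holds for all $n$ with no radius restriction. Alternatively, one can apply \Cref{prop:apriori-contraction} directly to $T|_C$, viewed as a self-map of the complete metric space $C$. Either route gives the bound unconditionally.
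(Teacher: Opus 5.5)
Your proof is correct and follows the paper's route. The paper's proof is the one-liner that \Cref{lem:kernel-Lip} supplies the $L$-Lipschitz bound and \Cref{prop:Phi-geo} (that is, \Cref{thm:rate} with $\kappa=L$) supplies the estimate, which is exactly what your Steps~1--3 do. You also observe that the linear gauge makes $\kappa(R;\theta)=L$ independent of $R$, so the ``$d_1>R$'' caveat in the proof of \Cref{thm:framework-fixed-point} is vacuous here; alternatively it can be avoided by applying \Cref{prop:apriori-contraction} to $T|_C$ on the complete space $C$. That observation is accurate and makes your argument tidier than the paper's.
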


\begin{proof}
This is \Cref{lem:kernel-Lip} and \Cref{prop:Phi-geo}.
\end{proof}

\subsection{Stopping rules derived from the bounds}\label{subsec:stopping}

A stopping rule is a deterministic rule that yields a guaranteed $\varepsilon$-solution.
We define it in terms of the bound.
The rule uses only data and the iteration counter.

\begin{definition}[A priori $\varepsilon$-stopping index]\label{def:N-eps}
Fix $\varepsilon>0$.
Define
\[
N_{\mathrm{geo}}(\varepsilon;\kappa,\delta_0)
:=\min\Bigl\{n\in\N:\Phi_{\mathrm{geo}}(n;\kappa,\delta_0)\le \varepsilon\Bigr\}.
\]
Similarly define
\[
N_{\omega}(\varepsilon;\theta,\delta_0)
:=\min\Bigl\{n\in\N:\Phi_{\omega}(n;\theta,\delta_0)\le \varepsilon\Bigr\},
\]
when $\Phi_{\omega}$ is available.
\end{definition}

The first index is closed-form.
The second is computed by evaluating partial sums until the inequality holds.

\begin{proposition}[Closed-form certified index]\label{prop:N-geo}
Assume $\kappa\in(0,1)$ and $\delta_0>0$.
Then
\[
N_{\mathrm{geo}}(\varepsilon;\kappa,\delta_0)
=
\begin{cases}
0, & \text{if }\ \delta_0\le (1-\kappa)\varepsilon,\\[1mm]
\left\lceil \dfrac{\ln\!\left(\dfrac{(1-\kappa)\varepsilon}{\delta_0}\right)}{\ln(\kappa)} \right\rceil,
& \text{if }\ \delta_0>(1-\kappa)\varepsilon.
\end{cases}
\]
Moreover, for every $n\ge N_{\mathrm{geo}}(\varepsilon;\kappa,\delta_0)$,
\[
\dist(x_n,x^\ast)\le \varepsilon.
\]
\end{proposition}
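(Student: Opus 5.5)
The plan is to reduce everything to the scalar inequality $\kappa^{\,n}\delta_0\le(1-\kappa)\varepsilon$, which by \Cref{def:Phi-geom} is equivalent to $\Phi_{\mathrm{geo}}(n;\kappa,\delta_0)\le\varepsilon$. The minimum in \Cref{def:N-eps} is then the least $n\in\N$ satisfying this inequality. The argument has two steps.

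\textbf{Step 1 (monotonicity).} Since $\kappa\in(0,1)$ and $\delta_0>0$, the map $n\mapsto\Phi_{\mathrm{geo}}(n;\kappa,\delta_0)$ is strictly decreasing and tends to $0$. Hence the set $\{n:\Phi_{\mathrm{geo}}(n)\le\varepsilon\}$ is nonempty and is an up-set $\{n\ge N\}$. So the minimum exists, and every $n\ge N_{\mathrm{geo}}$ also satisfies the inequality.

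\textbf{Step 2 (identifying the minimum).} Split into cases.
\begin{itemize}
\item If $\delta_0\le(1-\kappa)\varepsilon$, then $n=0$ already works, so the minimum is $0$.
\item Otherwise put $q:=(1-\kappa)\varepsilon/\delta_0\in(0,1)$. Taking logarithms and dividing by $\ln\kappa<0$, as in the proof of \Cref{cor:stopping}, the inequality $\kappa^{\,n}\le q$ is equivalent to $n\ge \ln q/\ln\kappa$. The right-hand side is positive, so the least natural number satisfying it is $\lceil \ln q/\ln\kappa\rceil$. This is the stated formula.
\end{itemize}

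For the final claim, take any $n\ge N_{\mathrm{geo}}$. Step 1 gives $\Phi_{\mathrm{geo}}(n)\le\varepsilon$, and \Cref{prop:Phi-geo} gives $\dist(x_n,x^\ast)\le\Phi_{\mathrm{geo}}(n)$, so $\dist(x_n,x^\ast)\le\varepsilon$. Equivalently, one can cite \Cref{cor:stopping} directly, since its $N(\varepsilon)$ coincides with the formula above.

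The only point needing care is that the ceiling is the minimum of the set, not merely an element of it. This is where the equivalence in Step 2 is used in both directions, together with the positivity of $\ln q/\ln\kappa$, which makes the ceiling a natural number. Otherwise the proof is routine.
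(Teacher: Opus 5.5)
Your proof is correct and follows the paper's route. The paper simply invokes \Cref{cor:stopping}, whose proof is exactly your reduction to $\kappa^{\,n}\le q$ with $q=(1-\kappa)\varepsilon/\delta_0$, division by $\ln\kappa<0$ and the ceiling, combined with the geometric bound of \Cref{thm:rate}; your explicit monotonicity argument, showing that the ceiling is the minimum in \Cref{def:N-eps} and that every larger $n$ also qualifies, just spells out what the paper compresses into ``the ceiling gives the first integer satisfying it.''
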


\begin{proof}
This is \Cref{cor:stopping}.
\end{proof}

The next rule is often more useful in practice because it avoids $x^\ast$ altogether.
It uses the residual.
Residual-based certification is central in modern analyses of fixed-point iterations and splitting schemes; see
\cite{BotNguyen2022FastKMResidual,ContrerasCominetti2021OptimalErrorBounds,CortildPeypouquet2024KMInertia,AtenasDaoTam2026VarStepsize}.

\begin{proposition}[Residual-to-error conversion]\label{prop:residual-to-error}
Under the hypotheses of \Cref{thm:framework-fixed-point},
\[
\dist(x_n,x^\ast)\le \frac{1}{1-\kappa}\,\dist(x_n,Tx_n)
\qquad (n\in\N).
\]
\end{proposition}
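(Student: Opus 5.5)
The plan is the classical triangle-inequality argument at the fixed point, with the local modulus $\kappa$ standing in for the Lipschitz constant. By \Cref{thm:framework-fixed-point} the fixed point $x^\ast\in C$ exists and is unique, and $x_n\in C$ for every $n$ because $C$ is $T$-invariant. Since $x^\ast=Tx^\ast$,
\[
\dist(x_n,x^\ast)\le \dist(x_n,Tx_n)+\dist(Tx_n,Tx^\ast).
\]
It then suffices to show $\dist(Tx_n,Tx^\ast)\le \kappa\,\dist(x_n,x^\ast)$. Rearranging gives $(1-\kappa)\dist(x_n,x^\ast)\le \dist(x_n,Tx_n)$, and dividing by $1-\kappa>0$ yields the claim.

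\textbf{Two-point case.} Here $\dist(Tx_n,Tx^\ast)\le \omega(\dist(x_n,x^\ast);\theta)$. Both $x_n$ and $x^\ast$ lie in $C\subseteq\mathbb{B}(x_0,R)$, so $\dist(x_n,x^\ast)\le 2R$. I will use the same convention as the proof of \Cref{thm:framework-fixed-point}, where $\kappa$ is computed on the working range containing all relevant distances (replacing $R$ by $2R$ if needed). Then $\omega(r;\theta)\le\kappa r$ holds for $r=\dist(x_n,x^\ast)$, which is the required inequality.

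\textbf{Proinov-type case.} This is the main obstacle. Here $M_T(x_n,x^\ast)$ also contains $\dist(x_n,Tx_n)$ and the average $\tfrac12(\dist(x_n,x^\ast)+\dist(x^\ast,Tx_n))$. Write $e=\dist(x_n,x^\ast)$, $\rho=\dist(x_n,Tx_n)$ and $s=\dist(Tx_n,x^\ast)$. Then $M:=M_T(x_n,x^\ast)=\max\{e,\rho,\tfrac12(e+s)\}$. I will argue that $s\le\kappa M$, using monotonicity of $\omega$ and the working-range bound, and then split into cases.
\begin{itemize}
\item If $M=e$, then $s\le\kappa e$ and the argument above applies.
\item If $M=\rho$, then $s\le\kappa\rho$, so $e\le \rho+s\le(1+\kappa)\rho$. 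This gives only the weaker constant $1+\kappa$.
\item If $M=\tfrac12(e+s)$, then $s\le \tfrac{\kappa}{2}(e+s)$, so $s\le \tfrac{\kappa}{2-\kappa}e\le\kappa e$, and the argument above applies.
\end{itemize}
So the constant $(1-\kappa)^{-1}$ is clean in the two-point case. In the Proinov case it needs more care, and the case $M=\rho$ must be handled separately.

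\textbf{Repairing the $M=\rho$ case.} I would first try to recover $(1-\kappa)^{-1}$ by avoiding the one-step inequality at $x_n$ altogether. Summing the residual chain as in \Cref{thm:rate}, with $d_{j+1}\le\kappa^{\,j-n}d_{n+1}$ from \Cref{lem:defect-recursion}, gives
\[
\dist(x_n,x^\ast)\le\sum_{j\ge n}d_{j+1}\le \frac{d_{n+1}}{1-\kappa}=\frac{\dist(x_n,Tx_n)}{1-\kappa}.
\]
This is valid in both cases, because it only uses the geometric decay of the defects established in the proof of \Cref{thm:framework-fixed-point}, applied from index $n$ onward. I will therefore make this telescoping argument the primary proof and keep the triangle-inequality computation as the two-point illustration.
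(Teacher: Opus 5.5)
Your primary telescoping argument is the paper's proof: sum $d_{j+1}\le\kappa^{\,j-n}d_{n+1}=\kappa^{\,j-n}\dist(x_n,Tx_n)$ over $j\ge n$, and use $\dist(x_n,x^\ast)\le\sum_{j\ge n}d_{j+1}$.

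One correction to your digression: $1+\kappa\le(1-\kappa)^{-1}$, so the case $M=\rho$ gives a \emph{better} constant and was never an obstacle. The real weakness of the triangle-inequality route is that it needs $\omega(r)\le\kappa r$ for distances up to $2R$, and $\kappa(2R;\theta)$ can equal $1$ even when $\kappa(R;\theta)<1$. The telescoping route avoids this because it needs only $d_1\le R$, which holds since $x_1\in C\subseteq\mathbb{B}(x_0,R)$, together with the fact that the $d_j$ are nonincreasing.
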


\begin{proof}
Use the estimate $\dist(x_n,x^\ast)\le \sum_{j=n}^\infty \dist(x_{j+1},x_j)$ and the contraction recursion
$\dist(x_{j+1},x_j)\le \kappa^{\,j-n}\dist(x_{n+1},x_n)=\kappa^{\,j-n}\dist(x_n,Tx_n)$.
Summing gives the stated bound.
\end{proof}

\begin{corollary}[Residual-based stopping rule]\label{cor:residual-stop}
Fix $\varepsilon>0$.
Assume the hypotheses of \Cref{thm:framework-fixed-point}.
If for some $n$,
\[
\dist(x_n,Tx_n)\le (1-\kappa)\varepsilon,
\]
then $\dist(x_n,x^\ast)\le \varepsilon$.
\end{corollary}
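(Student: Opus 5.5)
This corollary follows directly from \Cref{prop:residual-to-error}. Under the hypotheses of \Cref{thm:framework-fixed-point} we have $\kappa<1$, so $1-\kappa>0$, and for every $n\in\N$
\[
\dist(x_n,x^\ast)\le \frac{1}{1-\kappa}\,\dist(x_n,Tx_n).
\]
I will substitute the assumed residual bound $\dist(x_n,Tx_n)\le (1-\kappa)\varepsilon$ into the right-hand side. Because $(1-\kappa)^{-1}$ is a positive factor, the inequality keeps its direction, and the factors $(1-\kappa)$ and $(1-\kappa)^{-1}$ cancel. This gives $\dist(x_n,x^\ast)\le\varepsilon$.

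The substitution itself is immediate. The only point that needs justification is that \Cref{prop:residual-to-error} holds at the particular index $n$ being tested. That proposition comes from summing the contraction recursion along the tail of the Picard orbit started at $x_n$, namely
\[
\dist(x_{j+1},x_j)\le \kappa^{\,j-n}\dist(x_{n+1},x_n)\qquad (j\ge n).
\]
It therefore needs the linearized inequality $\omega(r;\theta)\le\kappa r$ to hold along the orbit. This is guaranteed because every iterate lies in the invariant working set $C$, and \Cref{lem:defect-recursion} shows the consecutive defects are nonincreasing. As a result they stay within the working range on which $\kappa$ was certified; if $d_1>R$, the range is enlarged as in the proof of \Cref{thm:framework-fixed-point}.

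Once this is in place, nothing else is needed: no a priori information on $x^\ast$ or on $\delta_0$ enters, only the computable residual at step $n$ and the certified constant $\kappa$.
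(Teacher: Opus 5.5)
Your argument is correct and matches the paper's, which reads the corollary off \Cref{prop:residual-to-error} by inserting the residual bound and cancelling the positive factor $1-\kappa$. One simplification to your justification paragraph: the case $d_1>R$ cannot occur, because $x_1=Tx_0\in C\subseteq\mathbb{B}(x_0,R)$ gives $d_1=\dist(x_0,x_1)\le R$, so monotonicity of the defects keeps every $d_j$ in $[0,R]$ and you never need the fallback of enlarging the range (which, for a nonlinear gauge, would in general not preserve $\kappa<1$).
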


\begin{proof}
Immediate from \Cref{prop:residual-to-error}.
\end{proof}

\begin{remark}[Two certificates]\label{rem:two-certificates}
The a priori index $N_{\mathrm{geo}}$ certifies accuracy before computation.
The residual criterion certifies accuracy during computation.
Both are derived from the same verifiable modulus $\kappa$.
This is the quantitative meaning of the framework.
\end{remark}


\section{Data dependence and stability}\label{sec:stability}

We quantify the effect of perturbing the map.
The statement is algebraic.
It uses only a verifiable modulus and a uniform perturbation size.
We derive data dependence from the same certified modulus that yields the rate.

\subsection{Perturbations of the map: $\dist(x^\ast,y^\ast)\le \StabC\,\norm{T-S}$}\label{subsec:perturb}

Fix a complete metric space $(X,\dist)$.
Fix a nonempty closed set $C\subseteq X$.
Let $T,S\colon C\to C$.

\begin{definition}[Uniform perturbation size on $C$]\label{def:eps-TS}
Define
\[
\varepsilon_C(T,S):=\sup_{x\in C}\dist(Tx,Sx)\in[0,\infty].
\]
\end{definition}

We state the stability bound under the same verifiable modulus used in \Cref{sec:framework}.
The modulus is local.
It is computed on the same working region.

\begin{definition}[Certified local modulus]\label{def:certified-kappa}
Let $T\colon C\to C$ satisfy either \Cref{def:two-point-gauge} or \Cref{def:proinov-gauge} on $C$
with gauge $\omega_T(\cdot;\theta_T)$.
Let $R>0$ satisfy $C\subseteq \mathbb{B} (x_0,R)$ for some $x_0\in C$.
Define
\[
\kappa_T:=\kappa(R;\theta_T)=\sup_{0<r\le R}\frac{\omega_T(r;\theta_T)}{r}\in[0,1).
\]
\end{definition}

\begin{theorem}[Data dependence under a certified modulus]\label{thm:stability}
Assume that $T$ admits a certified local modulus $\kappa_T\in[0,1)$ in the sense of \Cref{def:certified-kappa}.
Assume that $S$ admits at least one fixed point $y^\ast\in \Fix(S)\cap C$.
Assume that $T$ has a fixed point $x^\ast\in \Fix(T)\cap C$.
Assume $\varepsilon_C(T,S)<\infty$.
Then
\[
\dist(x^\ast,y^\ast)\le \StabC\,\varepsilon_C(T,S),
\qquad
\StabC:=\frac{1}{1-\kappa_T}.
\]
\end{theorem}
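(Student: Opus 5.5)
The plan is the classical triangle-inequality argument of \Cref{prop:stab-contraction}, with the global Lipschitz constant replaced by the certified local modulus $\kappa_T$. Write $\varepsilon:=\varepsilon_C(T,S)$. Since $x^\ast=Tx^\ast$ and $y^\ast=Sy^\ast$, insert $Ty^\ast$ and estimate
\[
\dist(x^\ast,y^\ast)=\dist(Tx^\ast,Sy^\ast)\le \dist(Tx^\ast,Ty^\ast)+\dist(Ty^\ast,Sy^\ast)\le \dist(Tx^\ast,Ty^\ast)+\varepsilon,
\]
where the last step uses $y^\ast\in C$ and the definition of $\varepsilon_C(T,S)$ in \Cref{def:eps-TS}.

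The key step is to show $\dist(Tx^\ast,Ty^\ast)\le \kappa_T\,\dist(x^\ast,y^\ast)$, and I would split into the two templates. In the two-point case, $\dist(Tx^\ast,Ty^\ast)\le \omega_T(\dist(x^\ast,y^\ast);\theta_T)$. In the Proinov case, $\dist(x^\ast,Tx^\ast)=0$, and the cross term is $\tfrac12(\dist(x^\ast,Ty^\ast)+\dist(y^\ast,Tx^\ast))$. Here $\dist(y^\ast,Tx^\ast)=\dist(y^\ast,x^\ast)$, while $\dist(x^\ast,Ty^\ast)\le \dist(x^\ast,y^\ast)+\dist(y^\ast,Ty^\ast)$. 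The point is that $y^\ast$ is a fixed point of $S$, not of $T$, so $\dist(y^\ast,Ty^\ast)=\dist(Sy^\ast,Ty^\ast)\le\varepsilon$ need not vanish. Hence
\[
M_T(x^\ast,y^\ast)\le \max\{\dist(x^\ast,y^\ast),\varepsilon\}+\tfrac12\varepsilon .
\]
In the two-point case the argument closes directly. In the Proinov case I would first derive the bound $\dist(x^\ast,y^\ast)\le \omega_T(M_T)+\varepsilon$ and then absorb the extra terms. The cleanest route I would try first is to restrict the main statement to the two-point template, or to treat the Proinov case by running the Picard orbit of $T$ from $y^\ast$ and using \Cref{prop:residual-to-error} (see below).

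Next, I must convert $\omega_T(r;\theta_T)\le\kappa_T r$, which is valid only for $r\in(0,R]$, at $r=\dist(x^\ast,y^\ast)$. Both points lie in $C\subseteq\mathbb B(x_0,R)$, so only $r\le 2R$ is guaranteed. This range issue is the main obstacle. I would handle it exactly as in the proof of \Cref{thm:framework-fixed-point}, by computing the certified modulus on the diameter range (replacing $R$ by $2R$ if necessary) and keeping the notation $\kappa_T$. With $\dist(Tx^\ast,Ty^\ast)\le \kappa_T\dist(x^\ast,y^\ast)$ in hand, we get $(1-\kappa_T)\dist(x^\ast,y^\ast)\le\varepsilon$, and dividing by $1-\kappa_T>0$ gives the claim.

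The alternative route, which also covers the Proinov template uniformly, is to iterate $T$ starting at $y^\ast\in C$. By \Cref{thm:framework-fixed-point} this orbit converges to the unique fixed point $x^\ast$. Applying \Cref{prop:residual-to-error} at $n=0$ gives
\[
\dist(y^\ast,x^\ast)\le (1-\kappa_T)^{-1}\dist(y^\ast,Ty^\ast)=(1-\kappa_T)^{-1}\dist(Sy^\ast,Ty^\ast)\le (1-\kappa_T)^{-1}\varepsilon .
\]
I would present this version, since it reuses the already-certified rate machinery, including its range convention, and gives the constant $\StabC=(1-\kappa_T)^{-1}$ in both cases.
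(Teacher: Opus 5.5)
Your proposal has a genuine gap, and it cannot be closed because the statement is false as written. You did correctly spot two errors in the paper's own proof. That proof runs exactly your direct triangle-inequality argument, and it disposes of the two obstacles you flagged by asserting (i) $\dist(x^\ast,y^\ast)\le R$ because $C\subseteq\mathbb{B}(x_0,R)$, when only $\le 2R$ follows, and (ii) $M_T(x^\ast,y^\ast)=\dist(x^\ast,y^\ast)$ in the Proinov case because ``$Ty^\ast=y^\ast$'', when $y^\ast$ is fixed by $S$, not $T$.

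Your repair of the range issue, however, does not prove the stated inequality. ``Replacing $R$ by $2R$ and keeping the notation $\kappa_T$'' proves $\dist(x^\ast,y^\ast)\le\varepsilon_C(T,S)/(1-\kappa(2R;\theta_T))$. That is a different, weaker statement: $\kappa(2R;\theta_T)$ can be strictly larger than $\kappa_T$, and can even equal $1$. The orbit route has the same hole. \Cref{prop:residual-to-error} rests on $\dist(z_{j+1},z_j)\le\kappa_T\,\dist(z_j,z_{j-1})$ along $z_j=T^jy^\ast$. This is available only if the first defect satisfies $\dist(y^\ast,Ty^\ast)\le R$, but the hypotheses give only $\dist(y^\ast,Ty^\ast)\le\varepsilon_C(T,S)$, which may exceed $R$. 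Appealing to ``its range convention'' is the same silent change of constant.

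Here is a counterexample to the statement. Let $X=C=\{p,0,1.1,1.35,1.6\}$, where the four reals carry $|s-t|$ and $\dist(p,t)=1$ for each real $t$; this is a metric. Take $x_0=p$, $R=1$, and $\omega(r)=r/4$ for $r\le 1$, $\omega(r)=r/2$ for $r>1$. This is a contractive gauge with $\kappa_T=1/4$. Define $T(0)=1.1$, $T(1.1)=T(1.35)=T(1.6)=1.6$, $T(p)=1.35$. The ten pairs all satisfy $\dist(Tx,Ty)\le\omega(\dist(x,y))$:
\begin{itemize}
\item pairs containing $p$ give $1/4\le\omega(1)=1/4$;
\item pairs $(0,t)$ give $1/2\le t/2$;
\item the remaining pairs give $0$.
\end{itemize}
So $x^\ast=1.6$. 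Let $S=T$ except $S(0)=0$. Then $y^\ast=0$ and $\varepsilon_C(T,S)=1.1$, but $\dist(x^\ast,y^\ast)=1.6>1.1/(3/4)\approx 1.47$.

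What your orbit argument does prove, with the original $\kappa_T$ and for both templates, is the theorem under the extra hypothesis $\dist(y^\ast,Ty^\ast)\le R$ (for instance $\varepsilon_C(T,S)\le R$). By \Cref{lem:defect-recursion}, whose proof works from any starting point in $C$, all defects then stay $\le R$ and satisfy $\dist(z_{n+1},z_n)\le\kappa_T^{\,n}\dist(z_1,z_0)$. The limit $z$ is a fixed point of $T$. In the Proinov case no continuity is needed: if $Tz\neq z$, then $M_T(z_n,z)=\dist(z,Tz)$ for large $n$, which forces $\dist(z,Tz)\le\omega(\dist(z,Tz))$, a contradiction. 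Uniqueness gives $z=x^\ast$, and summing the defects yields $\dist(y^\ast,x^\ast)\le\dist(y^\ast,Ty^\ast)/(1-\kappa_T)$. This improves on the direct argument, which needs $\dist(x^\ast,y^\ast)\le R$ a priori. Without such a hypothesis, the theorem must be restated with the modulus computed on $(0,2R]$ (or $(0,\operatorname{diam}C]$), and that modulus assumed $<1$.
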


\begin{proof}
Since $x^\ast=Tx^\ast$ and $y^\ast=Sy^\ast$,
\[
\dist(x^\ast,y^\ast)=\dist(Tx^\ast,Sy^\ast)
\le \dist(Tx^\ast,Ty^\ast)+\dist(Ty^\ast,Sy^\ast).
\]
We bound the first term by $\kappa_T\dist(x^\ast,y^\ast)$.
In the two-point gauge case this follows from
\[
\dist(Tx^\ast,Ty^\ast)\le \omega_T(\dist(x^\ast,y^\ast);\theta_T)\le \kappa_T\,\dist(x^\ast,y^\ast),
\]
because $\dist(x^\ast,y^\ast)\le R$ holds when $C\subseteq \mathbb{B}(x_0,R)$.
In the Proinov-type gauge case, $M_T(x^\ast,y^\ast)=\dist(x^\ast,y^\ast)$ since $Tx^\ast=x^\ast$ and $Ty^\ast=y^\ast$,
hence the same bound holds.
For the second term,
\[
\dist(Ty^\ast,Sy^\ast)\le \sup_{x\in C}\dist(Tx,Sx)=\varepsilon_C(T,S).
\]
Therefore
\[
\dist(x^\ast,y^\ast)\le \kappa_T\dist(x^\ast,y^\ast)+\varepsilon_C(T,S),
\]
and rearranging gives the claim.
\end{proof}

The theorem is local.
It only uses $\kappa_T$ on $C$.
This is the intended meaning of verifiability.
It is the same mechanism that underlies explicit residual-to-error conversions in quantitative fixed-point theory; compare
\cite{ContrerasCominetti2021OptimalErrorBounds,BotNguyen2022FastKMResidual,LeusteanPinto2022AltHalpernMannRates,BravoCominettiLee2026MinimaxHalpern}.

\begin{corollary}[Two-sided certified stability]\label{cor:two-sided-stability}
Assume that both $T$ and $S$ satisfy the framework on the same set $C$ with certified moduli
$\kappa_T,\kappa_S\in[0,1)$ and hence unique fixed points $x^\ast\in\Fix(T)\cap C$ and $y^\ast\in\Fix(S)\cap C$.
Set $\kappa:=\max\{\kappa_T,\kappa_S\}$.
Then
\[
\dist(x^\ast,y^\ast)\le \frac{1}{1-\kappa}\,\varepsilon_C(T,S).
\]
\end{corollary}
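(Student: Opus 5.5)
The plan is to reduce \Cref{cor:two-sided-stability} to \Cref{thm:stability} and then weaken the constant by monotonicity. First, since both $T$ and $S$ satisfy the framework on $C$, I will invoke \Cref{thm:framework-fixed-point} for each map to obtain the unique fixed points $x^\ast\in\Fix(T)\cap C$ and $y^\ast\in\Fix(S)\cap C$. This supplies exactly the existence hypotheses that \Cref{thm:stability} requires: a fixed point of $T$ in $C$, a fixed point of $S$ in $C$, and a certified local modulus $\kappa_T<1$ for $T$.

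Next, if $\varepsilon_C(T,S)=\infty$ there is nothing to prove, since the right-hand side is infinite. Otherwise \Cref{thm:stability} applies directly and gives
\[
\dist(x^\ast,y^\ast)\le \frac{1}{1-\kappa_T}\,\varepsilon_C(T,S).
\]
Because $t\mapsto (1-t)^{-1}$ is increasing on $[0,1)$ and $\kappa_T\le \kappa=\max\{\kappa_T,\kappa_S\}<1$, we have $(1-\kappa_T)^{-1}\le (1-\kappa)^{-1}$. This yields the stated bound.

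As an optional symmetric remark, the same argument with the roles of $T$ and $S$ swapped gives the bound with $\kappa_S$ in place of $\kappa_T$, using that $\varepsilon_C(S,T)=\varepsilon_C(T,S)$. Taking the better of the two would give $\min\{\kappa_T,\kappa_S\}$, which is stronger than the claim; for the corollary as stated the $\max$ suffices.

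The only point needing care is checking that the hypotheses of \Cref{thm:stability} are really met. In particular, the bound $\dist(x^\ast,y^\ast)\le R$ used inside its proof must hold so that $\kappa_T$ controls $\omega_T$ on the relevant range. Since both fixed points lie in $C\subseteq\mathbb{B}(x_0,R)$, this inherits whatever convention the paper adopted there; strictly, their distance could be up to $2R$. If that is a concern, I would evaluate the moduli at radius $2R$, following the same adjustment already made in the proof of \Cref{thm:framework-fixed-point}. No other obstacle is expected.
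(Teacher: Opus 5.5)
Your proof is correct and matches the paper's, which likewise just invokes \Cref{thm:stability}; the paper phrases it as plugging $\kappa\ge\kappa_T$ in directly as a modulus for $T$, which is equivalent to your monotonicity step $(1-\kappa_T)^{-1}\le(1-\kappa)^{-1}$. Your caveat that points of $C\subseteq\mathbb{B}(x_0,R)$ are only guaranteed to lie within $2R$ of each other is a fair criticism of the proof of \Cref{thm:stability} itself, and the paper's one-line proof inherits it equally; note, though, that recomputing the moduli at radius $2R$ would change the constants in the statement and need not keep them below $1$.
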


\begin{proof}
Apply \Cref{thm:stability} using $\kappa$ as a valid modulus for $T$ on $C$, since $\kappa_T\le \kappa$.
\end{proof}

\begin{remark}[Model-level interpretation]\label{rem:model-stability}
In the integral-equation and BVP settings, $\varepsilon_C(T,S)$ is computed by kernel and nonlinearity deviations.
It is a number.
The stability bound becomes a parameter sensitivity estimate.
Such claims appear in iterative-scheme papers, but the dependence on the certified modulus is often implicit; see
\cite{KumarEtAl2025BVP,TyagiVashistha2024ATDataDependence,PanjaEtAl2022JIA}.
\end{remark}

\subsection{Sharpness and unavoidable constants}\label{subsec:sharpness}

The factor $(1-\kappa_T)^{-1}$ is not cosmetic.
It is forced by examples.
It cannot be improved uniformly over the class of $\kappa$-contractive maps.

\begin{proposition}[Sharpness of $(1-\kappa)^{-1}$]\label{prop:sharpness}
Fix $\kappa\in[0,1)$.
Let $X=\R$ with $\dist(x,y)=|x-y|$.
Let $C=\R$.
Define
\[
T(x):=\kappa x,
\qquad
S(x):=\kappa x+\varepsilon,
\]
for some $\varepsilon\ge 0$.
Then $x^\ast=0$ is the unique fixed point of $T$ and $y^\ast=\varepsilon/(1-\kappa)$ is the unique fixed point of $S$.
Moreover,
\[
\varepsilon_C(T,S)=\varepsilon,
\qquad
\dist(x^\ast,y^\ast)=\frac{1}{1-\kappa}\,\varepsilon_C(T,S).
\]
\end{proposition}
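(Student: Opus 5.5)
The claim is a direct computation in $\R$, and I will verify its three parts in order: the fixed points, the perturbation size, and the equality of the distance with the stability bound.

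\emph{Fixed points.} For $T(x)=\kappa x$, the equation $x=\kappa x$ reads $(1-\kappa)x=0$. Since $\kappa<1$, this forces $x=0$, so $\Fix(T)=\{0\}$. For $S(x)=\kappa x+\varepsilon$, the equation $x=\kappa x+\varepsilon$ reads $(1-\kappa)x=\varepsilon$, whose unique solution is $y^\ast=\varepsilon/(1-\kappa)$. Existence and uniqueness therefore follow from solving a linear equation, with no appeal to \Cref{thm:framework-fixed-point}. One could also note that both maps are $\kappa$-Lipschitz on all of $\R$, so \Cref{prop:apriori-contraction} applies directly.

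\emph{Perturbation size.} For every $x\in\R$ we have $|Tx-Sx|=|\kappa x-\kappa x-\varepsilon|=\varepsilon$. The supremum over $C=\R$ in \Cref{def:eps-TS} is therefore exactly $\varepsilon$, and in particular it is finite.

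\emph{Distance of the fixed points.} $|x^\ast-y^\ast|=\varepsilon/(1-\kappa)=(1-\kappa)^{-1}\varepsilon_C(T,S)$, which is the asserted equality.

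The only point needing care is the interpretation of ``sharpness'' against \Cref{thm:stability}. That theorem requires a certified modulus on a set $C\subseteq\mathbb{B}(x_0,R)$, whereas here $C=\R$ is unbounded. I will remark that the equality itself is purely algebraic. To connect it with the framework, one can restrict to a bounded interval such as $C=[-\rho,\rho]$ with $\rho\ge \varepsilon/(1-\kappa)$. This $C$ is invariant under $T$, since $|\kappa x|\le\rho$. It is invariant under $S$ as well, since $|\kappa x+\varepsilon|\le \kappa\rho+\varepsilon\le\rho$ holds exactly when $\rho\ge\varepsilon/(1-\kappa)$. On this $C$ the geometric gauge $\omega(r)=\kappa r$ gives $\kappa_T=\kappa$, the perturbation size is still $\varepsilon$, and the fixed points are unchanged. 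Hence the bound of \Cref{thm:stability} is attained with equality, and no constant smaller than $(1-\kappa)^{-1}$ can hold uniformly over $\kappa$-contractive pairs.

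The degenerate case $\varepsilon=0$ gives $y^\ast=0=x^\ast$, and both sides of the equality vanish.
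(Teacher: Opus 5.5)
Your proof is correct and follows the paper's own argument: solve the linear equations $x=\kappa x$ and $y=\kappa y+\varepsilon$, observe that $|Tx-Sx|=\varepsilon$ for every $x$, and read off the equality. Your restriction to $C=[-\rho,\rho]$ with $\rho\ge\varepsilon/(1-\kappa)$ is a worthwhile refinement that the paper omits: $C=\R$ lies in no ball $\mathbb{B}(x_0,R)$, so \Cref{thm:stability} does not literally apply to the example as stated without that restriction.
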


\begin{proof}
Both maps are contractions with Lipschitz constant $\kappa$.
The fixed points solve $x=\kappa x$ and $y=\kappa y+\varepsilon$, hence $x^\ast=0$ and $y^\ast=\varepsilon/(1-\kappa)$.
Also $|T(x)-S(x)|=\varepsilon$ for all $x$, hence $\varepsilon_C(T,S)=\varepsilon$.
The equality follows.
\end{proof}

\begin{remark}[Why blow-up is unavoidable]\label{rem:blow-up}
As $\kappa\uparrow 1$, the map approaches nonexpansive behavior.
Then small perturbations can move fixed points by large distances.
This is the correct geometry.
It is already visible in the Banach regime and persists in modern generalizations; compare
\cite{Jachymski2024RACSAM,Proinov2020JFPTA,BerindePacurar2024EnrichedSurvey}.
\end{remark}

\begin{remark}[Locality is essential]\label{rem:locality}
The constant uses a modulus on $C$.
If $C$ is enlarged, the certified modulus may worsen.
This is an honest effect.
It is the price of verifiability.
\end{remark}


\section{Robustness under inexact evaluation}\label{sec:inexact}

The ideal orbit $x_{n+1}=Tx_n$ is rarely available.
One computes $\tilde x_{n+1}$ with error.
The error may be numerical.
It may be measurement noise.
A theory that ignores this is not a theory of computation.
Recent work on perturbations, inertial schemes, and stochastic Halpern-type iterations treats related stability phenomena;
see \cite{CortildPeypouquet2024KMInertia,PischkePowell2024StochasticHalpern,ColaoFlammarionMaisieres2026HalpernSGD,AtenasDaoTam2026VarStepsize}.
We keep the setting elementary.
We keep the constants explicit.

\subsection{Inexact iterates and residual control}\label{subsec:inexact-iter}

We work in the setting of \Cref{sec:framework,sec:main-theorem}.
In particular, $(X,\dist)$ is complete, $C\subseteq X$ is closed and $T$-invariant, and $T$ satisfies the framework with certified modulus $\kappa\in[0,1)$ on $C$.

\begin{definition}[Inexact Picard orbit]\label{def:inexact-orbit}
Let $\{\eta_n\}_{n\ge 0}\subset[0,\infty)$.
A sequence $\{\tilde x_n\}\subset C$ is an \emph{inexact Picard orbit} for $T$ with error budget $\{\eta_n\}$
if for all $n\ge 0$,
\[
\dist(\tilde x_{n+1},T\tilde x_n)\le \eta_n.
\]
\end{definition}

The next inequality is the only computation used later.
It is a perturbed contraction recursion.

\begin{lemma}[One-step perturbed recursion]\label{lem:one-step-perturbed}
Let $\{\tilde x_n\}\subset C$ satisfy \Cref{def:inexact-orbit}.
Let $x^\ast\in\Fix(T)\cap C$.
Then for every $n\ge 0$,
\[
\dist(\tilde x_{n+1},x^\ast)\le \kappa\,\dist(\tilde x_n,x^\ast)+\eta_n.
\]
\end{lemma}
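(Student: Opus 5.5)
The plan is a single triangle-inequality step through $T\tilde x_n$, followed by a local contraction estimate for the term involving $T$. Since $x^\ast=Tx^\ast$,
\[
\dist(\tilde x_{n+1},x^\ast)\le \dist(\tilde x_{n+1},T\tilde x_n)+\dist(T\tilde x_n,Tx^\ast).
\]
The first term is at most $\eta_n$ by \Cref{def:inexact-orbit}. What remains is to show
\[
\dist(T\tilde x_n,Tx^\ast)\le \kappa\,\dist(\tilde x_n,x^\ast).
\]

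\textbf{Two-point gauge case.} Here $\dist(T\tilde x_n,Tx^\ast)\le \omega(\dist(\tilde x_n,x^\ast);\theta)$. If $\dist(\tilde x_n,x^\ast)\le R$, then $\omega(r;\theta)\le\kappa r$ on $[0,R]$ gives the claim. Both points lie in $C\subseteq\mathbb{B}(x_0,R)$, so a priori we only know $\dist(\tilde x_n,x^\ast)\le 2R$. I would handle this exactly as in the proofs of \Cref{thm:framework-fixed-point} and \Cref{thm:stability}: take the certified modulus on the working range of distances realized in $C$, that is, with $R$ replaced by $\operatorname{diam} C\le 2R$ if needed. The paper already adopts this convention ("replace $R$ by $2R$"). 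I would state it explicitly as the meaning of "certified modulus $\kappa$ on $C$" in this section.

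\textbf{Proinov-type case.} This is the main obstacle, because
\[
M_T(\tilde x_n,x^\ast)=\max\bigl\{\dist(\tilde x_n,x^\ast),\ \dist(\tilde x_n,T\tilde x_n),\ 0,\ \tfrac12\bigl(\dist(\tilde x_n,x^\ast)+\dist(x^\ast,T\tilde x_n)\bigr)\bigr\}
\]
need not equal $\dist(\tilde x_n,x^\ast)$. Set $a=\dist(\tilde x_n,x^\ast)$ and $b=\dist(T\tilde x_n,x^\ast)$. Then $\dist(\tilde x_n,T\tilde x_n)\le a+b$, and the average term is at most $\max\{a,b\}$. If $b>a$, every candidate in $M_T$ is bounded by $a+b$, so the contractive inequality alone gives nothing clean.

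My first attempt would be to argue that $b\le a$ always holds, as in \Cref{lem:defect-recursion}. Indeed, if $M_T=\max\{a,b\}=b$, then $b\le\omega(b;\theta)<b$, a contradiction. The residual term $\dist(\tilde x_n,T\tilde x_n)$ can still dominate, though, and then only $b\le\kappa(a+b)$ follows, i.e.
\[
b\le\frac{\kappa}{1-\kappa}\,a,
\]
which is weaker than $\kappa a$.

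So I expect the clean lemma to need either the two-point gauge, or the reading of "certified modulus $\kappa$" as the statement that $T$ is $\kappa$-Lipschitz toward fixed points on $C$. I would therefore prove the two-point case fully. For the Proinov case I would state the effective constant $\kappa/(1-\kappa)$, or adopt the convention that $\kappa$ denotes that effective constant; this is harmless as long as it is below $1$. With the key inequality in hand, combining the two bounds gives the lemma.
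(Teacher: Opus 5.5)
Your route is the paper's: its proof is the single chain $\dist(\tilde x_{n+1},x^\ast)\le\dist(\tilde x_{n+1},T\tilde x_n)+\dist(T\tilde x_n,Tx^\ast)\le\eta_n+\kappa\,\dist(\tilde x_n,x^\ast)$. It justifies the last step only by the bare assertion that ``$T$ is $\kappa$-Lipschitz on $C$.'' You scrutinize exactly that assertion, and both of your reservations are correct.

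\textbf{Two-point case.} Enlarging $R$ to $\operatorname{diam}C\le 2R$ is needed, not cosmetic. The constant $\kappa(R;\theta)$ says nothing about $\omega$ on $(R,2R]$, yet $\dist(\tilde x_n,x^\ast)$ can lie there. The paper itself uses this convention in the proof of \Cref{thm:framework-fixed-point}. With it, your argument is complete.

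\textbf{Proinov case.} No proof can deliver the constant $\kappa$, because the statement is false there. Take $X=\R$, $c>0$, $C=\{0,1,-c\}$, and set $T0=0$, $T1=-c$, $T(-c)=0$. Use the linear gauge $\omega(r)=\kappa r$, so that $\kappa(R;\theta)=\kappa$ for every $R$.
\begin{itemize}
\item For the pairs $(0,1)$ and $(1,-c)$ one has $\dist(Tx,Ty)=c$ and $M_T(x,y)=1+c$. The pair $(0,-c)$ has equal images.
\item Hence $T$ is a Proinov-type gauge contraction on $C$ exactly when $c\le\kappa(1+c)$, i.e.\ $c\le\kappa/(1-\kappa)$.
\item Let $x^\ast=0$, $\tilde x_0=1$, $\tilde x_1=T\tilde x_0$, $\eta_0=0$. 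The lemma would demand $c\le\kappa$, which fails for every $c\in(\kappa,\kappa/(1-\kappa)]$.
\end{itemize}
The choice $c=\kappa/(1-\kappa)$ shows that your constant $\kappa/(1-\kappa)$ is sharp. For $\kappa>\tfrac12$ one even gets $\dist(T\tilde x_0,x^\ast)>\dist(\tilde x_0,x^\ast)$.

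A small correction to your aside about $b\le a$: $M_T$ contains $\tfrac12(a+b)$, not $b$. The correct statement is that if $b>a$, then $M_T$ must equal the residual term. The resulting inequality $b\le\kappa(a+b)$ is then compatible with $b>a$ only when $\kappa>\tfrac12$.

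So your proposal proves the lemma in exactly the generality in which it holds. It coincides with the paper's proof under the reading, implicit in the paper's one-liner, that the section's standing hypothesis means a genuine $\kappa$-Lipschitz bound on $C$. Your fallback of renaming $\kappa/(1-\kappa)$ as the modulus keeps \Cref{thm:inexact-apriori} and its corollaries meaningful only when the gauge modulus satisfies $\kappa<\tfrac12$.
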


\begin{proof}
Since $Tx^\ast=x^\ast$ and $T$ is $\kappa$-Lipschitz on $C$,
\[
\dist(\tilde x_{n+1},x^\ast)
\le \dist(\tilde x_{n+1},T\tilde x_n)+\dist(T\tilde x_n,Tx^\ast)
\le \eta_n+\kappa\,\dist(\tilde x_n,x^\ast).
\]
\end{proof}

Residual control is often used as a certification mechanism in quantitative analyses of fixed-point algorithms; see
\cite{ContrerasCominetti2021OptimalErrorBounds,BotNguyen2022FastKMResidual,LeusteanPinto2022AltHalpernMannRates}.
For inexact orbits we use a mixed residual.

\begin{definition}[Inexact residual]\label{def:inexact-residual}
For an inexact orbit $\{\tilde x_n\}$ define
\[
\tilde r_n:=\dist(\tilde x_{n+1},\tilde x_n).
\]
\end{definition}

\begin{lemma}[Residual bound under inexactness]\label{lem:inexact-residual-bound}
Let $\{\tilde x_n\}$ satisfy \Cref{def:inexact-orbit}.
Then for every $n\ge 0$,
\[
\tilde r_n \le \kappa\,\tilde r_{n-1}+\eta_n+\eta_{n-1}\qquad(n\ge 1),
\]
and
\[
\tilde r_n \le \kappa^{\,n}\tilde r_0+\sum_{j=1}^{n}\kappa^{\,n-j}(\eta_j+\eta_{j-1})\qquad(n\ge 1).
\]
\end{lemma}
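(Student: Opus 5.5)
The plan is to derive the one-step inequality by a triangle-inequality splitting that isolates one application of $T$ to the pair $(\tilde x_n,\tilde x_{n-1})$, and then to unroll it as a discrete variation-of-constants formula.

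\textbf{One-step inequality.} Fix $n\ge 1$ and insert the intermediate points $T\tilde x_n$ and $T\tilde x_{n-1}$:
\[
\tilde r_n=\dist(\tilde x_{n+1},\tilde x_n)\le \dist(\tilde x_{n+1},T\tilde x_n)+\dist(T\tilde x_n,T\tilde x_{n-1})+\dist(T\tilde x_{n-1},\tilde x_n).
\]
The two outer terms are bounded by $\eta_n$ and $\eta_{n-1}$ by \Cref{def:inexact-orbit}. For the middle term I use the standing convention of this section, already invoked in \Cref{lem:one-step-perturbed}: $T$ is $\kappa$-Lipschitz on $C$. This gives
\[
\dist(T\tilde x_n,T\tilde x_{n-1})\le \kappa\,\dist(\tilde x_n,\tilde x_{n-1})=\kappa\,\tilde r_{n-1}.
\]
In the two-point gauge case this is justified because $\tilde x_n,\tilde x_{n-1}\in C\subseteq\mathbb{B}(x_0,R)$. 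Then $\omega(r)\le \kappa r$ on the working range, with $R$ enlarged to $2R$ if needed, exactly as in the proof of \Cref{thm:framework-fixed-point}. Combining the three bounds yields the recursive inequality of the statement.

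\textbf{Unrolling.} I then prove the closed form by induction on $n$.
\begin{itemize}
\item For $n=1$, the recursion reads $\tilde r_1\le\kappa\tilde r_0+\eta_1+\eta_0$, which is the formula.
\item Assuming the formula for $n-1$, substitute it into $\tilde r_n\le \kappa\tilde r_{n-1}+\eta_n+\eta_{n-1}$. Multiplying by $\kappa\ge 0$ preserves the inequality. The sum $\sum_{j=1}^{n-1}\kappa^{n-1-j}(\cdot)$ becomes $\sum_{j=1}^{n-1}\kappa^{n-j}(\cdot)$, and the new term $\eta_n+\eta_{n-1}$ is exactly the $j=n$ summand with weight $\kappa^0$.
\end{itemize}

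\textbf{Main obstacle.} The only delicate point is the middle term in the Proinov-type case, where no two-point Lipschitz bound is directly available. The paper's framework treats $T$ as $\kappa$-Lipschitz on $C$ in this section, and I will rely on that standing hypothesis rather than re-derive a Proinov estimate for non-orbit pairs. If one insisted on deriving it, $M_T(\tilde x_n,\tilde x_{n-1})$ would bring in extra $\eta$ terms and a worse constant. The stated inequality is therefore a consequence of the section's certified-Lipschitz assumption.
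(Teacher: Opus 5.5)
Your proposal is correct and follows the paper's proof exactly: the same three-term triangle split through $T\tilde x_n$ and $T\tilde x_{n-1}$, with the outer terms bounded by $\eta_n,\eta_{n-1}$, the middle term bounded by $\kappa\,\tilde r_{n-1}$, and the recursion then unrolled (your induction simply makes the paper's ``iteration'' step explicit). Like the paper, you rest the middle bound on the section's standing hypothesis that $T$ is $\kappa$-Lipschitz on $C$, which is the right thing to do; the ``enlarge $R$ to $2R$'' aside does not by itself justify that bound, since it replaces $\kappa$ by the possibly larger $\kappa(2R;\theta)$, and your caveat about the Proinov case is well taken.
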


\begin{proof}
For $n\ge 1$,
\[
\tilde r_n=\dist(\tilde x_{n+1},\tilde x_n)
\le \dist(\tilde x_{n+1},T\tilde x_n)+\dist(T\tilde x_n,T\tilde x_{n-1})+\dist(T\tilde x_{n-1},\tilde x_n).
\]
The first and third terms are bounded by $\eta_n$ and $\eta_{n-1}$.
The middle term is bounded by $\kappa\,\dist(\tilde x_n,\tilde x_{n-1})=\kappa\,\tilde r_{n-1}$.
This gives the one-step inequality.
Iteration gives the second inequality.
\end{proof}

\subsection{Quantitative resilience to numerical/measurement error}\label{subsec:resilience}

The recursion in \Cref{lem:one-step-perturbed} yields a closed-form bound.
It is the discrete analogue of a resolvent estimate.
It separates contraction decay from error accumulation.

\begin{theorem}[A priori error bound for inexact orbits]\label{thm:inexact-apriori}
Assume the hypotheses of \Cref{thm:framework-fixed-point} with certified modulus $\kappa\in[0,1)$.
Let $x^\ast\in\Fix(T)\cap C$.
Let $\{\tilde x_n\}\subset C$ satisfy \Cref{def:inexact-orbit} with budget $\{\eta_n\}$.
Then for every $n\ge 0$,
\[
\dist(\tilde x_n,x^\ast)
\le
\kappa^{\,n}\dist(\tilde x_0,x^\ast)
+
\sum_{j=0}^{n-1}\kappa^{\,n-1-j}\eta_j.
\]
\end{theorem}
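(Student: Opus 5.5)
The plan is to unroll the one-step perturbed recursion of \Cref{lem:one-step-perturbed} by induction on $n$. Write $e_n:=\dist(\tilde x_n,x^\ast)$. That lemma gives, for every $n\ge 0$,
\[
e_{n+1}\le \kappa\,e_n+\eta_n.
\]
It applies because $\{\tilde x_n\}\subset C$, $x^\ast\in\Fix(T)\cap C$, and $T$ is $\kappa$-Lipschitz on $C$ along the relevant pairs. The claimed bound is exactly the solution of this linear scalar recursion with equality, so a comparison argument should close the proof.

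\textbf{Base case.} For $n=0$ the right-hand side reduces to $\kappa^0 e_0=e_0$, because the sum over $j$ is empty. The statement is then trivial.

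\textbf{Inductive step.} Assume the bound at $n$. Multiplying it by $\kappa\ge 0$ preserves the inequality, so
\[
e_{n+1}\le \kappa e_n+\eta_n\le \kappa^{\,n+1}e_0+\sum_{j=0}^{n-1}\kappa^{\,n-j}\eta_j+\eta_n .
\]
The term $\eta_n$ is the $j=n$ summand $\kappa^{\,n-j}\eta_j=\kappa^0\eta_n$, so the right-hand side equals $\kappa^{\,n+1}e_0+\sum_{j=0}^{n}\kappa^{\,n-j}\eta_j$. This is the claimed bound at $n+1$.

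\textbf{Main obstacle.} The algebra is routine; the delicate point is justifying the use of the modulus $\kappa$ in \Cref{lem:one-step-perturbed}. The certified modulus only controls $\omega(r)\le\kappa r$ for $r\le R$, whereas $\dist(\tilde x_n,x^\ast)$ could a priori be as large as $2R$ inside $C\subseteq\mathbb{B}(x_0,R)$. In the Proinov case the gauge also acts on $M_T$ rather than on the distance alone. I would handle this the way \Cref{thm:framework-fixed-point} does: take $\kappa$ as the constant certified on the working range, enlarging $R$ to $2R$ if necessary. In the Proinov case I would rely on the statement of \Cref{lem:one-step-perturbed} as given, since it is an earlier result and may be assumed. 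With that lemma granted, the theorem follows by the induction above.
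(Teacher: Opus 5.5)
Your proposal is correct and follows the paper's own proof, which likewise applies \Cref{lem:one-step-perturbed} to get $\dist(\tilde x_{n+1},x^\ast)\le\kappa\,\dist(\tilde x_n,x^\ast)+\eta_n$ and unfolds this recursion by induction, exactly as in your base case and inductive step. The caveats you flag, namely the certified range $[0,R]$ of $\kappa$ and the Proinov case, are inherited from that lemma and from the $2R$ device in \Cref{thm:framework-fixed-point}; the paper's proof of this theorem does not treat them separately either.
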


\begin{proof}
Apply \Cref{lem:one-step-perturbed} and unfold the recursion by induction.
\end{proof}

The theorem gives resilience when the error budget is summable or uniformly bounded.
Both regimes occur in computation.

\begin{corollary}[Summable errors]\label{cor:summable}
Assume the hypotheses of \Cref{thm:inexact-apriori}.
If $\sum_{j\ge 0}\eta_j<\infty$, then $\dist(\tilde x_n,x^\ast)\to 0$.
More precisely,
\[
\dist(\tilde x_n,x^\ast)\le
\kappa^{\,n}\dist(\tilde x_0,x^\ast)
+
\sum_{j=0}^{\infty}\kappa^{\,n-1-j}\eta_j
\quad (n\ge 1).
\]
\end{corollary}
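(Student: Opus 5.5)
The plan is to read everything off \Cref{thm:inexact-apriori}, which already gives, for every $n\ge 0$,
\[
\dist(\tilde x_n,x^\ast)\le \kappa^{\,n}\dist(\tilde x_0,x^\ast)+\sum_{j=0}^{n-1}\kappa^{\,n-1-j}\eta_j .
\]
I will treat the displayed ``more precisely'' inequality and the convergence claim separately. The reason is that the displayed bound is a valid upper bound but does not itself tend to $0$.

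\textbf{The displayed bound.} For $\kappa\in(0,1)$ the terms with $j\ge n$ in $\sum_{j=0}^{\infty}\kappa^{\,n-1-j}\eta_j$ are nonnegative, because $\kappa^{\,n-1-j}>0$ and $\eta_j\ge 0$. So the infinite sum dominates the finite sum in \Cref{thm:inexact-apriori}, and the inequality follows at once. I will note that those extra terms carry factors $\kappa^{-(j-n+1)}\ge 1$, so this series is finite only if $\sum_j \kappa^{-j}\eta_j<\infty$. That is a stronger condition than summability, and otherwise the display reads $\le\infty$. When $\kappa=0$, negative powers are undefined; I will use the convention that only the terms $j\le n-1$ are retained, which reduces the display to \Cref{thm:inexact-apriori}.

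\textbf{Convergence to $0$.} This is the real content, and it comes from the finite sum. Since $\kappa^{\,n}\dist(\tilde x_0,x^\ast)\to 0$, it suffices to show that $s_n:=\sum_{j=0}^{n-1}\kappa^{\,n-1-j}\eta_j\to 0$. Put $E:=\sum_{j\ge0}\eta_j<\infty$ and $m=m(n):=\lfloor n/2\rfloor$, and split $s_n$ at $j=m$. For $j<m$ we have $n-1-j\ge n-m\ge n/2$, so those terms sum to at most $\kappa^{\,n-m}E\le \kappa^{\,n/2}E$. For $m\le j\le n-1$ we use $\kappa^{\,n-1-j}\le 1$, so those terms sum to at most the tail $\sum_{j\ge m}\eta_j$. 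This tail tends to $0$ as $m\to\infty$ by summability. Hence $s_n\le \kappa^{\,n/2}E+\sum_{j\ge \lfloor n/2\rfloor}\eta_j\to 0$. If $\kappa=0$, then $s_n=\eta_{n-1}\to 0$ directly.

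The only delicate point is the one flagged above: the infinite-sum display cannot be the mechanism for convergence. I expect to present it as a trivially valid (possibly infinite) majorant, and to base the limit statement on the splitting estimate, which gives an explicit decay bound in terms of $\kappa$ and the tails of $\{\eta_j\}$.
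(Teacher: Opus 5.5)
Your proof is correct and follows the paper's route: the display comes from \Cref{thm:inexact-apriori} by dominating the finite sum with the infinite one, whose terms are nonnegative, and convergence comes from the geometric-weighting-of-tails argument, which the paper only asserts in one line and your split at $\lfloor n/2\rfloor$ makes rigorous. One small correction to your side remark: whenever the displayed series is finite it equals $\kappa^{\,n-1}\sum_{j}\kappa^{-j}\eta_j$ and so does tend to $0$; the real issue, which you correctly identify, is that under mere summability it may be $+\infty$ for every $n$.
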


\begin{proof}
The bound is \Cref{thm:inexact-apriori}.
Since $\kappa^{\,n}\to 0$ and the tail of a convergent series tends to $0$ under geometric weighting, the limit follows.
\end{proof}

\begin{corollary}[Uniformly bounded errors]\label{cor:bounded}
Assume the hypotheses of \Cref{thm:inexact-apriori}.
Assume $\sup_{j\ge 0}\eta_j\le \bar\eta$.
Then for every $n\ge 0$,
\[
\dist(\tilde x_n,x^\ast)\le
\kappa^{\,n}\dist(\tilde x_0,x^\ast)
+
\frac{1-\kappa^{\,n}}{1-\kappa}\,\bar\eta,
\]
and hence
\[
\limsup_{n\to\infty}\dist(\tilde x_n,x^\ast)\le \frac{\bar\eta}{1-\kappa}.
\]
\end{corollary}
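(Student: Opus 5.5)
The plan is to specialize \Cref{thm:inexact-apriori} to a uniformly bounded budget and then sum the resulting geometric series. For every $n\ge 0$, \Cref{thm:inexact-apriori} gives
\[
\dist(\tilde x_n,x^\ast)\le \kappa^{\,n}\dist(\tilde x_0,x^\ast)+\sum_{j=0}^{n-1}\kappa^{\,n-1-j}\eta_j .
\]
Using $\eta_j\le\bar\eta$ for each $j$, the error term is bounded by $\bar\eta\sum_{j=0}^{n-1}\kappa^{\,n-1-j}$. Reindexing with $i=n-1-j$ turns this into $\bar\eta\sum_{i=0}^{n-1}\kappa^{\,i}$.

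For $\kappa\in[0,1)$ this finite geometric sum equals $(1-\kappa^{\,n})/(1-\kappa)$. This covers $\kappa=0$ under the convention $0^0=1$, where the sum is $1$ for $n\ge1$. It also covers $n=0$: the sum is empty and equals $0$, and the right side is $(1-1)/(1-\kappa)=0$, so the estimate reduces to $\dist(\tilde x_0,x^\ast)\le \dist(\tilde x_0,x^\ast)$. This yields the first inequality.

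For the $\limsup$ claim, I first need $\dist(\tilde x_0,x^\ast)<\infty$, which is automatic in a metric space. Since $\kappa^{\,n}\to0$, the first term vanishes, and $(1-\kappa^{\,n})/(1-\kappa)\to 1/(1-\kappa)$. Passing to $\limsup$ in the first inequality then gives the bound $\bar\eta/(1-\kappa)$.

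The main (mild) obstacle is not in this corollary but in the validity of \Cref{thm:inexact-apriori}. That theorem rests on \Cref{lem:one-step-perturbed}, which needs $\dist(T\tilde x_n,Tx^\ast)\le\kappa\,\dist(\tilde x_n,x^\ast)$. This holds because $\tilde x_n\in C\subseteq\mathbb{B}(x_0,R)$, and I would take the same working-range convention as in the proof of \Cref{thm:framework-fixed-point}. The induction unfolding the one-step recursion is routine: the case $n=0$ is trivial, and for the step I multiply the bound for $n$ by $\kappa$ and add $\eta_n$. I will cite it rather than redo it.
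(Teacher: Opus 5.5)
Your argument is correct and is essentially the paper's proof: bound each $\eta_j$ by $\bar\eta$ in \Cref{thm:inexact-apriori}, sum $\sum_{j=0}^{n-1}\kappa^{\,n-1-j}=(1-\kappa^{\,n})/(1-\kappa)$, and let $\kappa^{\,n}\to 0$ for the $\limsup$. One caution on your closing aside, which the deduction does not need: the step $\dist(T\tilde x_n,x^\ast)\le\kappa\,\dist(\tilde x_n,x^\ast)$ in \Cref{lem:one-step-perturbed} follows from $\tilde x_n\in C$ only in the two-point gauge case (with the $R\mapsto 2R$ convention), because in the Proinov-type case $M_T(\tilde x_n,x^\ast)$ also contains $\dist(\tilde x_n,T\tilde x_n)$, and then the gauge by itself gives only the factor $\kappa/(1-\kappa)$.
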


\begin{proof}
Bound the sum in \Cref{thm:inexact-apriori} by $\bar\eta\sum_{j=0}^{n-1}\kappa^{\,n-1-j}=\bar\eta(1-\kappa^{\,n})/(1-\kappa)$.
\end{proof}

This is the same constant as in the stability theorem \Cref{thm:stability}.
The coincidence is structural.
Both results are consequences of the same resolvent inequality.
It is the unique point where the factor $(1-\kappa)^{-1}$ enters.

We now give a certified stopping rule for inexact computation.
It combines a residual and the error budget.
The rule avoids $x^\ast$.

\begin{proposition}[Residual-based certification under inexactness]\label{prop:inexact-stop}
Assume the hypotheses of \Cref{thm:framework-fixed-point} with modulus $\kappa\in[0,1)$.
Let $\{\tilde x_n\}$ satisfy \Cref{def:inexact-orbit}.
Then for every $n\ge 0$,
\[
\dist(\tilde x_n,x^\ast)\le \frac{1}{1-\kappa}\,\Bigl(\dist(\tilde x_n,T\tilde x_n)\Bigr)
\le \frac{1}{1-\kappa}\,\bigl(\tilde r_n+\eta_n\bigr).
\]
Consequently, if
\[
\tilde r_n+\eta_n \le (1-\kappa)\varepsilon,
\]
then $\dist(\tilde x_n,x^\ast)\le \varepsilon$.
\end{proposition}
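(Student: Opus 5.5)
The plan is a direct residual-to-error argument at the point $\tilde x_n$ itself, not an orbit-based one. \Cref{prop:residual-to-error} is stated only for exact Picard iterates, so I will not invoke it. Instead I will redo its underlying inequality, which is the same resolvent inequality as in \Cref{thm:stability}, for an arbitrary point $z\in C$.

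\textbf{Step 1 (pointwise resolvent bound).} Fix $z\in C$ and the fixed point $x^\ast\in\Fix(T)\cap C$ supplied by \Cref{thm:framework-fixed-point}. By the triangle inequality and $Tx^\ast=x^\ast$,
\[
\dist(z,x^\ast)\le \dist(z,Tz)+\dist(Tz,Tx^\ast)\le \dist(z,Tz)+\kappa\,\dist(z,x^\ast).
\]
Since $\kappa<1$, rearranging gives $\dist(z,x^\ast)\le (1-\kappa)^{-1}\dist(z,Tz)$. Taking $z=\tilde x_n$ yields the first inequality of the statement.

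\textbf{Step 2 (computable residual).} The triangle inequality together with \Cref{def:inexact-orbit} gives
\[
\dist(\tilde x_n,T\tilde x_n)\le \dist(\tilde x_n,\tilde x_{n+1})+\dist(\tilde x_{n+1},T\tilde x_n)\le \tilde r_n+\eta_n.
\]
This is the second inequality. The stopping criterion then follows at once: if $\tilde r_n+\eta_n\le(1-\kappa)\varepsilon$, the chain gives $\dist(\tilde x_n,x^\ast)\le\varepsilon$.

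\textbf{Main obstacle.} The only delicate point is justifying $\dist(Tz,Tx^\ast)\le\kappa\,\dist(z,x^\ast)$ in Step 1.

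In the two-point gauge case this inequality follows from $\omega(r)\le\kappa r$ for $r\le R$. One has to check that $\dist(z,x^\ast)$ lies in the working range. Both points lie in $C\subseteq\mathbb{B}(x_0,R)$, so their distance is at most $2R$, and I will use the same enlargement convention (replacing $R$ by $2R$) already adopted in the proof of \Cref{thm:framework-fixed-point}. \Cref{lem:one-step-perturbed} uses this Lipschitz bound in the same way, so I will follow that convention.

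In the Proinov-type case the bound holds because $x^\ast$ is a fixed point. Then
\[
M_T(z,x^\ast)=\max\{\dist(z,x^\ast),\dist(z,Tz),\tfrac12(\dist(z,x^\ast)+\dist(x^\ast,Tz))\}.
\]
Assume $\dist(z,Tz)\le\dist(z,x^\ast)$. Then $\dist(x^\ast,Tz)\le 2\dist(z,x^\ast)$, so the third term is at most $\tfrac32\dist(z,x^\ast)$. This suggests the cleanest route is to work in the two-point (Lipschitz) setting stated at the start of \Cref{sec:inexact}, where $T$ is assumed $\kappa$-Lipschitz on $C$, and simply cite that standing assumption.
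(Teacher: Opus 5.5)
Your Step 2 and the stopping implication are exactly the paper's. For the first inequality the paper takes a different route: it reuses \Cref{prop:residual-to-error} at the point $\tilde x_n$. That is, it restarts the exact Picard orbit $z_0:=\tilde x_n$, $z_{k+1}:=Tz_k$, which converges to the same $x^\ast$ by uniqueness in $C$, and sums the steps $\dist(z_{k+1},z_k)\le\kappa^k\dist(\tilde x_n,T\tilde x_n)$. Your one-step resolvent inequality is shorter and needs no auxiliary orbit. It is correct in the two-point gauge case, using the same $2R$ working-range convention as the paper; that convention tacitly replaces $\kappa$ by $\kappa(2R;\theta)$ in both routes, so it is not a point of difference.

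The gap is the Proinov-type case. It belongs to the hypotheses of \Cref{thm:framework-fixed-point}, and hence to those of the proposition. There the bound $\dist(Tz,Tx^\ast)\le\kappa\,\dist(z,x^\ast)$ does not follow: $T$ need not be Lipschitz, and $M_T(z,x^\ast)$ can exceed $\dist(z,x^\ast)$ through the term $\dist(z,Tz)$. The ``standing assumption'' you want to cite is not actually assumed. The opening of \Cref{sec:inexact} only says that $T$ satisfies the framework with modulus $\kappa$. The Lipschitz claim in the proof of \Cref{lem:one-step-perturbed} is itself justified only in the two-point case. So as written you prove only the two-point case.

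The paper's orbit route avoids this, because the step contraction along an exact orbit comes from \Cref{lem:defect-recursion}, which covers both gauge types. Your route can also be completed, but not by bounding $\dist(x^\ast,Tz)\le 2\dist(z,x^\ast)$; that only gives $\dist(Tz,x^\ast)\le\tfrac32\kappa\,\dist(z,x^\ast)$, which loses the constant. Instead keep the unknown on both sides. Set $a:=\dist(z,x^\ast)$, $b:=\dist(z,Tz)$ and $c:=\dist(Tz,x^\ast)$; then $c\le\kappa\max\{a,b,\tfrac12(a+c)\}$.
\begin{itemize}
\item If $\tfrac12(a+c)\ge\max\{a,b\}$, then $c\ge a$ and $c\le\kappa\cdot\tfrac12(a+c)\le\kappa c$, so $c=a=0$.
\item Otherwise $c\le\kappa\max\{a,b\}$. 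Combined with $a\le b+c$, this gives $a\le b/(1-\kappa)$ when $a\ge b$. When $a<b$ the bound is trivial, since $a<b\le b/(1-\kappa)$.
\end{itemize}
Hence $\dist(z,x^\ast)\le(1-\kappa)^{-1}\dist(z,Tz)$ for every $z\in C$ under either gauge type, with no Lipschitz property of $T$ required.
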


\begin{proof}
The first inequality is the same residual-to-error conversion as in \Cref{prop:residual-to-error},
applied to the exact map $T$ at the point $\tilde x_n$:
\[
\dist(\tilde x_n,x^\ast)\le \frac{1}{1-\kappa}\,\dist(\tilde x_n,T\tilde x_n).
\]
For the second inequality,
\[
\dist(\tilde x_n,T\tilde x_n)
\le \dist(\tilde x_n,\tilde x_{n+1})+\dist(\tilde x_{n+1},T\tilde x_n)
\le \tilde r_n+\eta_n.
\]
The stopping implication follows.
\end{proof}

\begin{remark}[Interpretation for models]\label{rem:interpret-inexact}
In the integral equation setting, $\eta_n$ can represent quadrature error or evaluation noise in $f(s,\cdot)$.
In BVP reductions, it can represent discretization of the Green operator.
The bound in \Cref{cor:bounded} gives a certified error floor.
It is explicit in $\kappa$, hence explicit in kernel bounds and Lipschitz constants by \Cref{lem:kernel-Lip}.
\end{remark}

\begin{remark}[Relation to accelerated and stochastic schemes]\label{rem:accel-stoch}
In accelerated or stochastic fixed-point schemes, the inexactness may be intrinsic.
The present estimates isolate the deterministic core.
They can be inserted as a stability module inside more elaborate analyses; compare
\cite{PischkePowell2024StochasticHalpern,ColaoFlammarionMaisieres2026HalpernSGD,MartinDialloVialard2026LearnKM}.
\end{remark}


\section{Application I: nonlinear integral equations}\label{sec:integral}

We work in the Banach space $X=C([a,b])$ with $\|\cdot\|_\infty$ and metric $\dist(x,y)=\|x-y\|_\infty$.
The fixed-point formulation is standard in the recent applied literature; see
\cite{JubairEtAl2021JIA,SagheerEtAl2023JIA,SinghEtAl2024JIA,Nawaz2024JIA,IshtiaqBatoolHussainAlsulami2025JIA,KumarEtAl2025BVP}.

\subsection{Hammerstein type: kernel bounds and Lipschitz nonlinearities}\label{subsec:hammerstein}

Consider the Hammerstein equation
\begin{equation}\label{eq:hammerstein}
x(t)=g(t)+\int_a^b K(t,s)\,f(s,x(s))\,ds,\qquad t\in[a,b].
\end{equation}
Define $T\colon X\to X$ by
\begin{equation}\label{eq:TH}
(Tx)(t):=g(t)+\int_a^b K(t,s)\,f(s,x(s))\,ds.
\end{equation}

\begin{assumption}[Hammerstein data]\label{ass:H}
Assume:
\begin{enumerate}[label=(H\arabic*)]
\item $g\in X$.
\item $K$ is measurable and $(M_K)_{\mathrm{H}}:=\sup_{t\in[a,b]}\int_a^b|K(t,s)|\,ds<\infty$.
\item $f\colon [a,b]\times\R\to\R$ is continuous in the second variable and satisfies the Lipschitz bound
\[
|f(s,u)-f(s,v)|\le L_f|u-v|\qquad(s\in[a,b],\ u,v\in\R)
\]
for some explicit $L_f\ge 0$.
\item $f_0(s):=f(s,0)$ is bounded on $[a,b]$ and $M_{f_0}:=\sup_{s\in[a,b]}|f_0(s)|<\infty$.
\end{enumerate}
\end{assumption}

\begin{lemma}[Invariant ball and Lipschitz modulus]\label{lem:hammerstein-invariant}
Assume \Cref{ass:H}.
Set $M:=(M_K)_{\mathrm{H}}$ and $L:=L_f M$.
Assume $L<1$.
Define
\begin{equation}\label{eq:R-hamm}
R:=\frac{\|g\|_\infty+M\,M_{f_0}}{1-L}.
\end{equation}
Let $C:=\mathbb{B}(0,R)=\{x\in X:\|x\|_\infty\le R\}$.
Then $T(C)\subseteq C$ and $\Lip(T|_C)\le L$.
\end{lemma}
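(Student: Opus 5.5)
The plan has two parts: the Lipschitz estimate, which is essentially \Cref{lem:kernel-Lip}\,(i), and the invariance of the ball, which comes from a pointwise bound on $|(Tx)(t)|$ followed by the choice of $R$.

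\textbf{Step 1: $T$ maps $X$ into $X$ and is $L$-Lipschitz.} I would invoke \Cref{lem:kernel-Lip}\,(i), which asserts both facts under (H2)--(H3), with $\Lip(T)\le L_f(M_K)_{\mathrm H}=L$ on all of $X$, hence on $C$. For completeness I would also record the direct estimate: for $x,y\in X$ and $t\in[a,b]$,
\[
|(Tx)(t)-(Ty)(t)|\le\int_a^b|K(t,s)|\,L_f\,|x(s)-y(s)|\,ds\le L_f M\,\|x-y\|_\infty .
\]
Taking the supremum over $t$ gives the claim.

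\textbf{Step 2: a pointwise bound on $T$.} Write $f(s,x(s))=f_0(s)+\bigl(f(s,x(s))-f(s,0)\bigr)$. By (H3) and (H4),
\[
|f(s,x(s))|\le M_{f_0}+L_f\,|x(s)|\le M_{f_0}+L_f\,\|x\|_\infty .
\]
Hence for every $t$,
\[
|(Tx)(t)|\le\|g\|_\infty+M\bigl(M_{f_0}+L_f\|x\|_\infty\bigr)=\|g\|_\infty+M M_{f_0}+L\|x\|_\infty .
\]

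\textbf{Step 3: invariance of $C$.} For $\|x\|_\infty\le R$, Step 2 gives
\[
\|Tx\|_\infty\le \|g\|_\infty+MM_{f_0}+LR=(1-L)R+LR=R,
\]
where the middle equality is exactly the definition \eqref{eq:R-hamm} of $R$. Thus $T(C)\subseteq C$. The ball $C$ is nonempty and closed, so it is a $T$-invariant set in the sense of \Cref{def:invariant-set}.

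\textbf{Main obstacle.} The only non-routine point is that $Tx$ must actually lie in $C([a,b])$, i.e.\ be continuous in $t$. This needs more than $(M_K)_{\mathrm H}<\infty$, for example continuity of $t\mapsto K(t,\cdot)$ in $L^1$. I would not reprove this. Instead I would rely on the assertion in \Cref{lem:kernel-Lip} that $T\colon X\to X$, and note that the kernel-regularity condition implicit there is the one being assumed. Measurability of $s\mapsto f(s,x(s))$ is handled in the same way, and its boundedness follows from Step 2.
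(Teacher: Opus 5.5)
Your proposal is correct and follows the paper's proof essentially verbatim: the bound $|f(s,x(s))|\le M_{f_0}+L_f\|x\|_\infty$ gives $\|Tx\|_\infty\le\|g\|_\infty+M M_{f_0}+LR=R$, and the kernel estimate gives $\Lip(T|_C)\le L$. Your caveat that $Tx\in C([a,b])$, and that $s\mapsto f(s,x(s))$ is measurable, needs more than $(M_K)_{\mathrm H}<\infty$ is a fair point; the paper's proof passes over it silently, so deferring to \Cref{lem:kernel-Lip} is no weaker than what the paper does.
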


\begin{proof}
Let $x\in C$.
Using $|f(s,x(s))|\le |f(s,0)|+L_f|x(s)|$ we obtain
\[
\|Tx\|_\infty
\le \|g\|_\infty+\sup_t\int_a^b |K(t,s)|\,(|f(s,0)|+L_f\|x\|_\infty)\,ds
\le \|g\|_\infty+M(M_{f_0}+L_f R).
\]
By \eqref{eq:R-hamm} this equals $R$.
Hence $T(C)\subseteq C$.
For $x,y\in C$,
\[
\|Tx-Ty\|_\infty
\le \sup_t\int_a^b |K(t,s)|\,|f(s,x(s))-f(s,y(s))|\,ds
\le M L_f \|x-y\|_\infty
= L\|x-y\|_\infty.
\]
\end{proof}

\begin{theorem}[Hammerstein: unique solvability and a priori rate]\label{thm:hammerstein-quant}
Assume \Cref{ass:H} and $L=L_f(M_K)_{\mathrm{H}}<1$.
Let $R$ be as in \eqref{eq:R-hamm} and $C=\mathbb{B}(0,R)$.
Then \eqref{eq:hammerstein} has a unique solution $x^\ast\in C$.
For any $x_0\in C$, the Picard iterates $x_{n+1}:=Tx_n$ satisfy
\[
\|x_n-x^\ast\|_\infty\le \Phi_{\mathrm{H}}(n;L,\delta_0):=\frac{L^{\,n}}{1-L}\,\delta_0,
\qquad
\delta_0:=\|Tx_0-x_0\|_\infty.
\]
\end{theorem}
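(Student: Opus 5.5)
The plan is to reduce everything to the classical contraction estimate on the invariant ball supplied by \Cref{lem:hammerstein-invariant}. By that lemma, $C=\mathbb{B}(0,R)$ with $R$ as in \eqref{eq:R-hamm} satisfies $T(C)\subseteq C$, and $T|_C$ is $L$-Lipschitz with $L<1$. Since $C$ is a closed ball in the Banach space $(C([a,b]),\|\cdot\|_\infty)$ (\Cref{def:Cab}), it is itself a complete metric space. We may therefore regard $T|_C\colon C\to C$ as a contraction on a complete metric space. I would avoid routing this through the data packet of \Cref{def:data-packet}, since condition (D3) asks for $C\subseteq\mathbb{B}(x_0,R)$ with the same $R$, which fails for $C=\mathbb{B}(0,R)$ in general. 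Working directly with the geometric gauge $\omega(r)=Lr$ sidesteps this, because then $\omega(r)/r=L$ for every radius, so no working-radius issue arises.

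\textbf{Step 1 (existence and uniqueness).} Apply \Cref{prop:apriori-contraction} with $X$ replaced by the complete space $C$ and $T$ by $T|_C$. This gives a unique $x^\ast\in C$ with $Tx^\ast=x^\ast$, which is exactly a solution of \eqref{eq:hammerstein} lying in $C$. Uniqueness within $C$ can also be seen directly: two fixed points $x^\ast,y^\ast\in C$ satisfy $\|x^\ast-y^\ast\|_\infty\le L\|x^\ast-y^\ast\|_\infty$, and $L<1$ forces equality. It is worth noting that uniqueness in fact holds globally in $X$, since the Lipschitz estimate in the proof of \Cref{lem:hammerstein-invariant} did not use $x,y\in C$; the statement, however, only asks for uniqueness in $C$.

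\textbf{Step 2 (rate).} For $x_0\in C$, invariance gives $x_n\in C$ for all $n$. Setting $d_{n+1}=\|x_{n+1}-x_n\|_\infty$, we have $d_{n+1}\le L^n\delta_0$ by induction from the Lipschitz bound on $C$. Summing the tail, exactly as in the proof of \Cref{thm:rate}, gives $\|x_n-x^\ast\|_\infty\le\sum_{j\ge n}L^j\delta_0=L^n\delta_0/(1-L)$. Equivalently, one can quote the estimate of \Cref{prop:apriori-contraction} verbatim on $C$, with $\dist(x_0,x_1)=\delta_0$.

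The only point needing care is the completeness and invariance bookkeeping, namely that the contraction principle is applied to $T|_C$ on the closed ball rather than to $T$ on $X$. With \Cref{lem:hammerstein-invariant} already available, this is routine, and there is no genuine obstacle.
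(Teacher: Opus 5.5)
Your proof is correct and is essentially the paper's argument: the paper likewise uses \Cref{lem:hammerstein-invariant} to see that $T\colon C\to C$ is an $L$-contraction on the closed ball and then reads off the bound from \Cref{thm:rate} with $\kappa=L$, while you quote the equivalent classical estimate of \Cref{prop:apriori-contraction} on the complete space $C$. Your caution about (D3) is a fair bookkeeping point, though it does not break the paper's route, because (D3) only asks for some explicit radius, and one may take $2R$ (since $C\subseteq\mathbb{B}(x_0,2R)$ for $x_0\in C$), which still gives $\kappa(2R;\theta)=L$ for the linear gauge $\omega(r)=Lr$.
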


\begin{proof}
By \Cref{lem:hammerstein-invariant}, $T\colon C\to C$ is an $L$-contraction.
Apply \Cref{thm:rate} with $\kappa=L$.
\end{proof}

\begin{corollary}[Hammerstein: stability under perturbation]\label{cor:hammerstein-stab}
Assume \Cref{ass:H} and $L<1$, and let $T$ be as in \eqref{eq:TH}.
Let $\widetilde g,\widetilde K,\widetilde f$ define $\widetilde T$ by the same formula.
Assume $\widetilde T(C)\subseteq C$ and $\Lip(\widetilde T|_C)\le L$.
Let $x^\ast\in\Fix(T)\cap C$ and $\widetilde x^\ast\in\Fix(\widetilde T)\cap C$.
Then
\[
\|x^\ast-\widetilde x^\ast\|_\infty\le \frac{1}{1-L}\,\varepsilon_C(T,\widetilde T),
\]
where $\varepsilon_C(T,\widetilde T)=\sup_{x\in C}\|Tx-\widetilde T x\|_\infty$.
In particular, if $\widetilde f=f$, then
\[
\varepsilon_C(T,\widetilde T)
\le \|g-\widetilde g\|_\infty
+\Bigl(\sup_{t\in[a,b]}\int_a^b|K(t,s)-\widetilde K(t,s)|\,ds\Bigr)\,(M_{f_0}+L_f R).
\]
\end{corollary}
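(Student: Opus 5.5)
The statement has two parts: the stability bound, and the explicit estimate for $\varepsilon_C(T,\widetilde T)$ when $\widetilde f=f$. I would handle them separately.

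\emph{Stability bound.} By \Cref{lem:hammerstein-invariant}, $T$ maps $C=\mathbb{B}(0,R)$ into itself and $\Lip(T|_C)\le L$. So $T$ is a two-point gauge contraction on $C$ with geometric gauge $\omega(r)=Lr$, and its certified modulus is $\kappa_T=L$. I would then apply \Cref{thm:stability} with $S=\widetilde T$, using the given fixed points $x^\ast$ and $\widetilde x^\ast$ in $C$. This gives the bound $(1-L)^{-1}\varepsilon_C(T,\widetilde T)$ directly.

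One step needs care: the proof of \Cref{thm:stability} requires $\dist(x^\ast,\widetilde x^\ast)$ to lie in the range where $\omega(r)\le\kappa r$. Here this is automatic, since $\omega(r)=Lr$ for every $r\ge0$ and so no working-radius restriction arises. I would also note that if $\varepsilon_C(T,\widetilde T)=\infty$ the bound is trivial. Alternatively, one can argue directly, which avoids any radius bookkeeping:
\[
\|x^\ast-\widetilde x^\ast\|_\infty\le\|Tx^\ast-T\widetilde x^\ast\|_\infty+\|T\widetilde x^\ast-\widetilde T\widetilde x^\ast\|_\infty\le L\|x^\ast-\widetilde x^\ast\|_\infty+\varepsilon_C(T,\widetilde T),
\]
and then rearrange.

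\emph{Explicit bound on $\varepsilon_C$.} Fix $x\in C$ and $t\in[a,b]$. With $\widetilde f=f$,
\[
(Tx)(t)-(\widetilde Tx)(t)=g(t)-\widetilde g(t)+\int_a^b\bigl(K(t,s)-\widetilde K(t,s)\bigr)f(s,x(s))\,ds .
\]
As in the proof of \Cref{lem:hammerstein-invariant}, for $\|x\|_\infty\le R$ we have
\[
|f(s,x(s))|\le|f(s,0)|+L_f|x(s)|\le M_{f_0}+L_fR .
\]
Bounding the integral by $\int_a^b|K-\widetilde K|\,ds\cdot(M_{f_0}+L_fR)$, then taking the supremum over $t$ and over $x\in C$, gives the stated estimate.

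The main obstacle is purely technical: making sure the integrals are well defined. If $\widetilde K$ is measurable, then $\int|K-\widetilde K|\,ds$ is either finite or $+\infty$, and in the latter case the bound is trivial. There is no genuine mathematical difficulty.
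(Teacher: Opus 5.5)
Your proposal is correct and follows the paper's proof: apply \Cref{thm:stability} with $S=\widetilde T$ and $\kappa_T=L$. The explicit estimate then comes from writing out $(Tx)(t)-(\widetilde Tx)(t)$ and using the triangle inequality together with $|f(s,x(s))|\le M_{f_0}+L_fR$ on $C$.

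Your remark that the linear gauge $\omega(r)=Lr$ removes any working-radius restriction is a worthwhile precaution. Two points of $\mathbb{B}(0,R)$ are only guaranteed to be within distance $2R$, not $R$ as the proof of \Cref{thm:stability} asserts.
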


\begin{proof}
Apply \Cref{thm:stability} with $\kappa_T=L$.
For the explicit bound, write $(Tx)(t)-(\widetilde T x)(t)$ and estimate by triangle inequality and the uniform kernel bound.
\end{proof}

Such explicit dependence estimates are used implicitly in applied fixed-point papers,
but the factor $(1-L)^{-1}$ is often left unstated; compare \cite{KumarEtAl2025BVP,TyagiVashistha2024ATDataDependence}.

\subsection{Volterra type: monotonicity and resolvent estimates}\label{subsec:volterra}

Consider the Volterra equation
\begin{equation}\label{eq:volterra}
x(t)=g(t)+\int_a^t K(t,s)\,f(s,x(s))\,ds,\qquad t\in[a,b].
\end{equation}
Define $T\colon X\to X$ by
\begin{equation}\label{eq:TV}
(Tx)(t):=g(t)+\int_a^t K(t,s)\,f(s,x(s))\,ds.
\end{equation}

\begin{assumption}[Volterra data]\label{ass:V}
Assume \Cref{ass:H}(H1),(H3),(H4), and assume:
\begin{enumerate}[label=(V\arabic*)]
\item $K$ is measurable and $(M_K)_{\mathrm{V}}:=\sup_{t\in[a,b]}\int_a^t|K(t,s)|\,ds<\infty$.
\end{enumerate}
Set $M:=(M_K)_{\mathrm{V}}$ and $L:=L_f M$.
\end{assumption}

\begin{lemma}[Volterra: invariant ball and modulus]\label{lem:volterra-invariant}
Assume \Cref{ass:V} and $L<1$.
Define $R$ by the same formula as \eqref{eq:R-hamm} with $M=(M_K)_{\mathrm{V}}$.
Let $C=\mathbb{B}(0,R)$.
Then $T(C)\subseteq C$ and $\Lip(T|_C)\le L$.
\end{lemma}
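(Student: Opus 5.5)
The plan is to copy the proof of \Cref{lem:hammerstein-invariant} verbatim, replacing $\int_a^b$ by $\int_a^t$ and $(M_K)_{\mathrm{H}}$ by $M=(M_K)_{\mathrm{V}}$. Two claims must be checked: the ball $C=\mathbb{B}(0,R)$ is mapped into itself, and $T$ is $L$-Lipschitz on $C$.

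\textbf{Step 1 (invariance).} Fix $x\in C$ and $t\in[a,b]$. By (H3) and (H4),
\[
|f(s,x(s))|\le |f(s,0)|+L_f|x(s)|\le M_{f_0}+L_f R
\]
for $s\in[a,t]$. Hence
\[
|(Tx)(t)|\le \|g\|_\infty+\int_a^t|K(t,s)|\,ds\,(M_{f_0}+L_fR)\le \|g\|_\infty+M(M_{f_0}+L_fR).
\]
Taking the supremum over $t$ and using the definition of $R$ gives
\[
\|g\|_\infty+MM_{f_0}+LR=(1-L)R+LR=R,
\]
so $\|Tx\|_\infty\le R$.

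\textbf{Step 2 (Lipschitz bound).} For $x,y\in C$,
\[
|(Tx)(t)-(Ty)(t)|\le \int_a^t|K(t,s)|\,L_f|x(s)-y(s)|\,ds\le L_fM\|x-y\|_\infty.
\]
Taking the supremum over $t$ gives $\Lip(T|_C)\le L$. This is also the content of \Cref{lem:kernel-Lip}(ii), which may simply be cited.

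\textbf{Main obstacle.} The only nontrivial point is that $Tx$ actually lies in $X=C([a,b])$. The Volterra upper limit depends on $t$, and $K$ is only assumed measurable, so continuity in $t$ is not automatic. I would settle this by appealing to the standing convention of \Cref{lem:kernel-Lip}(ii), which already asserts $T\colon X\to X$. If a self-contained argument is wanted, I would add the usual $L^1$-continuity of $t\mapsto K(t,\cdot)\mathbf 1_{[a,t]}(\cdot)$ and use the boundedness of $f(\cdot,x(\cdot))$ together with dominated convergence. Apart from this point, the estimates are identical to the Hammerstein case.
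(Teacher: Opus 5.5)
Your proof is correct and is exactly the paper's argument: the paper just says the Hammerstein proof goes through verbatim with $\int_a^b$ replaced by $\int_a^t$ and $(M_K)_{\mathrm{H}}$ replaced by $(M_K)_{\mathrm{V}}$, and your Steps 1 and 2 carry out those estimates correctly. The issue you flag is genuine and is silently assumed in the paper as well: $Tx\in C([a,b])$ does not follow from mere measurability of $K$. Adding $L^1$-continuity of $t\mapsto K(t,\cdot)\mathbf{1}_{[a,t]}$ is therefore a sensible supplement, not a departure from the paper's route.
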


\begin{proof}
The proof is identical to \Cref{lem:hammerstein-invariant}, replacing $\int_a^b$ by $\int_a^t$ and using $(M_K)_{\mathrm{V}}$.
\end{proof}

\begin{theorem}[Volterra: unique solvability and a priori rate]\label{thm:volterra-quant}
Assume \Cref{ass:V} and $L=L_f(M_K)_{\mathrm{V}}<1$.
Let $x^\ast\in C$ be the unique solution of \eqref{eq:volterra}.
For any $x_0\in C$, the Picard iterates satisfy
\[
\|x_n-x^\ast\|_\infty\le \Phi_{\mathrm{V}}(n;L,\delta_0):=\frac{L^{\,n}}{1-L}\,\delta_0,
\qquad
\delta_0:=\|Tx_0-x_0\|_\infty.
\]
\end{theorem}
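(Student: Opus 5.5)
The plan mirrors the proof of \Cref{thm:hammerstein-quant}, with the Volterra lemma in place of the Hammerstein one. First I would invoke \Cref{lem:volterra-invariant}. Under \Cref{ass:V} and $L<1$, the closed ball $C=\mathbb{B}(0,R)$, with $R$ given by \eqref{eq:R-hamm} and $M=(M_K)_{\mathrm{V}}$, is $T$-invariant, and $\Lip(T|_C)\le L$.

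Before using the lemma I would check that $T$ really maps $X$ into $X$. The lemma's own proof only reuses the Hammerstein computation. Continuity of $t\mapsto\int_a^t K(t,s)f(s,x(s))\,ds$ is the one point needing care, and I take it as part of the standing framework through \Cref{lem:kernel-Lip}(ii). The invariance estimate itself is unchanged: $\int_a^t|K(t,s)|\,ds\le M$ for every $t$, so $\|Tx\|_\infty\le \|g\|_\infty+M(M_{f_0}+L_fR)=R$.

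Next I would assemble the admissible data packet of \Cref{def:data-packet}:
\begin{itemize}
\item[(D1)] $X=C([a,b])$ is complete by \Cref{def:Cab}.
\item[(D2)] $C$ is nonempty, closed and invariant.
\item[(D3)] For $x_0\in C$ we have $C\subseteq\mathbb{B}(x_0,2R)$, so the working radius is $2R$.
\item[(D4)] The geometric gauge $\omega(r)=Lr$ gives a two-point gauge contraction.
\item[(D5)] $\kappa(2R)=L<1$. Since the gauge is linear, $\kappa$ does not depend on the radius, which sidesteps the radius-doubling remark in the proof of \Cref{thm:framework-fixed-point}.
\end{itemize}
\Cref{thm:framework-fixed-point} then gives a unique fixed point $x^\ast\in C$, which is the unique solution of \eqref{eq:volterra} in $C$. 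This justifies the phrase ``the unique solution'' in the statement.

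Finally, \Cref{thm:rate} with $\kappa=L$ and $\delta_0=\|Tx_0-x_0\|_\infty$ yields $\|x_n-x^\ast\|_\infty\le L^n\delta_0/(1-L)$. Alternatively, I would prove this directly: $d_{n+1}\le L^n\delta_0$ by the Lipschitz bound on $C$ (all iterates stay in $C$), then sum the geometric tail.

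I expect no real obstacle. The only delicate point is making sure the Picard orbit stays in $C$, so that the local Lipschitz bound applies at every step. Invariance of $C$ together with $x_0\in C$ settles this. A secondary point is the continuity of $Tx$, which I would handle by citing the hypotheses as the paper does.
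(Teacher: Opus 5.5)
Your proposal is correct and follows the paper's route: the paper's proof combines \Cref{lem:volterra-invariant} (invariant ball $C=\mathbb{B}(0,R)$, $\Lip(T|_C)\le L$) with the Hammerstein argument, i.e.\ \Cref{thm:rate} with $\kappa=L$, which is exactly what you do, with the data-packet bookkeeping (working radius $2R$ and the radius-independence of $\kappa$ for a linear gauge) made explicit. Your caveat about continuity of $t\mapsto\int_a^t K(t,s)f(s,x(s))\,ds$ is well placed, since mere measurability of $K$ with $(M_K)_{\mathrm{V}}<\infty$ does not guarantee $Tx\in C([a,b])$. That gap comes from \Cref{lem:kernel-Lip}(ii) in the paper, not from your argument.
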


\begin{proof}
Apply \Cref{thm:hammerstein-quant} with \Cref{lem:volterra-invariant}.
\end{proof}

Monotonicity is not needed for contraction.
It is still diagnostic.
It supplies invariant order intervals without solving for $R$.

\begin{assumption}[Order structure]\label{ass:mono}
Assume $K(t,s)\ge 0$ for $a\le s\le t\le b$ and $f(s,\cdot)$ is nondecreasing for each $s$.
\end{assumption}

\begin{lemma}[Monotone invariance]\label{lem:order-interval}
Assume \Cref{ass:mono}.
Let $m,M\in X$ satisfy $m\le Tm$ and $TM\le M$ pointwise on $[a,b]$.
Then the order interval $[m,M]:=\{x\in X: m\le x\le M\}$ is $T$-invariant.
\end{lemma}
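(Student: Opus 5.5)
The plan is to prove monotonicity of $T$ on the order interval and then sandwich. Under \Cref{ass:mono}, if $x\le y$ pointwise, then $f(s,x(s))\le f(s,y(s))$ for every $s$, because $f(s,\cdot)$ is nondecreasing. Multiplying by $K(t,s)\ge 0$ for $s\le t$ and integrating over $[a,t]$ gives
\[
(Tx)(t)-(Ty)(t)=\int_a^t K(t,s)\bigl(f(s,x(s))-f(s,y(s))\bigr)\,ds\le 0 .
\]
So $T$ is order preserving on $X$ with respect to the pointwise order. Here $T$ is the Volterra operator \eqref{eq:TV}, and only the values of $K$ on the triangle $s\le t$ enter, which is exactly where the sign assumption is made.

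Next I fix $x\in[m,M]$. From $m\le x\le M$ and monotonicity I get $Tm\le Tx\le TM$. Combining with the hypotheses $m\le Tm$ and $TM\le M$ yields $m\le Tx\le M$, so $Tx\in[m,M]$.

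To meet \Cref{def:invariant-set}, I also need $[m,M]$ to be nonempty and closed. It is nonempty when $m\le M$, since then $m$ itself is an element; I would either note that this is implicit in calling $[m,M]$ an interval, or add $m\le M$ as a standing requirement. Closedness in $\|\cdot\|_\infty$ holds because uniform convergence implies pointwise convergence, and pointwise non-strict inequalities pass to the limit.

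The main obstacle is measure-theoretic: the integrals must be defined so that the pointwise comparison is legitimate, and $Tx$ must lie in $X=C([a,b])$. I would rely on the standing assumptions of \Cref{ass:V}, which already make $T\colon X\to X$ well defined as in \Cref{lem:kernel-Lip}. Given that, the comparison is just monotonicity of the Lebesgue integral applied to a nonnegative integrand for each fixed $t$.
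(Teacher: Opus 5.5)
Your proof is correct and follows the same route as the paper's proof. Monotonicity of $f(s,\cdot)$ together with $K\ge 0$ gives $Tm\le Tx\le TM$ for $x\in[m,M]$, and the hypotheses $m\le Tm$, $TM\le M$ then place $Tx$ in $[m,M]$; your additional checks of closedness and of nonemptiness (which indeed requires $m\le M$) cover parts of \Cref{def:invariant-set} that the paper's one-line argument leaves implicit.
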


\begin{proof}
If $x\in[m,M]$, then $f(s,m(s))\le f(s,x(s))\le f(s,M(s))$.
Multiplying by $K\ge 0$ and integrating gives $Tm\le Tx\le TM$.
Hence $m\le Tx\le M$.
\end{proof}

When $L<1$, the order interval can be intersected with the invariant ball $C$.
The contraction argument then yields a unique fixed point in the intersection.
This is the cleanest way to combine monotonicity diagnostics with a quantitative certificate.

\begin{proposition}[Volterra resolvent estimate]\label{prop:volterra-resolvent}
Assume \Cref{ass:V} with $L<1$.
Let $T$ and $\widetilde T$ be two Volterra operators of the form \eqref{eq:TV} on the same invariant set $C=\mathbb{B}(0,R)$.
Assume $\Lip(T|_C)\le L$ and $\Lip(\widetilde T|_C)\le L$.
Let $x^\ast\in\Fix(T)\cap C$ and $\widetilde x^\ast\in\Fix(\widetilde T)\cap C$.
Then
\[
\|x^\ast-\widetilde x^\ast\|_\infty\le \frac{1}{1-L}\,\varepsilon_C(T,\widetilde T).
\]
\end{proposition}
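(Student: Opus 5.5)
The plan is to reduce the statement to \Cref{thm:stability} (equivalently \Cref{cor:two-sided-stability}) with certified modulus $\kappa_T=L$. The only work is to check that the Volterra operator $T$ fits the hypotheses of that theorem on $C=\mathbb{B}(0,R)$. We then read off the constant $(1-L)^{-1}$.

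First, $T$ is a two-point gauge contraction on $C$ in the sense of \Cref{def:two-point-gauge}, with the geometric gauge $\omega(r)=Lr$. This follows from the assumption $\Lip(T|_C)\le L$; alternatively it follows from \Cref{lem:volterra-invariant}, which also gives $T(C)\subseteq C$. For this gauge, $\kappa(R';\theta)=L<1$ for every radius $R'>0$. So the certified local modulus of \Cref{def:certified-kappa} equals $L$, whatever radius is used to contain $C$. I take the center $x_0=0\in C$ and note $C\subseteq\mathbb{B}(0,R)$.

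The one point needing care is the step inside \Cref{thm:stability} that bounds $\dist(x^\ast,\widetilde x^\ast)$ by the working radius. Two points of $\mathbb{B}(0,R)$ can be up to $2R$ apart. For the geometric gauge this causes no difficulty, since $\omega(r)\le Lr$ holds for all $r\ge0$. I would therefore argue directly rather than quote the radius condition:
\[
\|x^\ast-\widetilde x^\ast\|_\infty=\|Tx^\ast-\widetilde T\widetilde x^\ast\|_\infty\le \|Tx^\ast-T\widetilde x^\ast\|_\infty+\|T\widetilde x^\ast-\widetilde T\widetilde x^\ast\|_\infty .
\]
The first term is at most $L\|x^\ast-\widetilde x^\ast\|_\infty$ because both points lie in $C$ and $\Lip(T|_C)\le L$. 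The second term is at most $\varepsilon_C(T,\widetilde T)$ because $\widetilde x^\ast\in C$.

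Two routine points remain before rearranging. If $\varepsilon_C(T,\widetilde T)=\infty$, the claim is trivial. Otherwise, $\|x^\ast-\widetilde x^\ast\|_\infty\le 2R<\infty$, so we may subtract $L\|x^\ast-\widetilde x^\ast\|_\infty$ from both sides and divide by $1-L>0$. This gives the claimed bound. The hypothesis $\Lip(\widetilde T|_C)\le L$ is not needed for the inequality itself. It only guarantees, via \Cref{thm:framework-fixed-point}, that $\widetilde x^\ast$ exists and is unique in $C$, so the statement is not vacuous. I expect no real obstacle here; the only subtle point is the one about the working radius handled above.
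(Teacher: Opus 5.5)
Your proposal is correct and follows the paper's route: the paper simply invokes \Cref{cor:two-sided-stability} with $\kappa=L$, and that corollary unwinds to exactly your triangle-inequality-and-rearrange argument from the proof of \Cref{thm:stability}. Your direct handling of the working-radius step is slightly more careful than the paper's proof of \Cref{thm:stability}, which asserts $\dist(x^\ast,y^\ast)\le R$ although two points of $\mathbb{B}(x_0,R)$ are only guaranteed to be within $2R$; as you observe, this is harmless for the linear gauge $\omega(r)=Lr$.
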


\begin{proof}
This is \Cref{cor:two-sided-stability} with $\kappa=L$.
\end{proof}

Recent work treats existence and approximation for integral equations via fixed-point schemes under various generalized contractions;
see \cite{JubairEtAl2021JIA,SagheerEtAl2023JIA,SinghEtAl2024JIA,Nawaz2024JIA,IshtiaqBatoolHussainAlsulami2025JIA}.
The present point is different.
The constants are exported into an explicit $\Phi(n;\text{data})$ and an explicit stability factor.

\section{Application II: boundary value problems}\label{sec:bvp}

We treat a boundary value problem through a Green operator.
The fixed-point reduction is classical.
The present aim is narrower.
We propagate the model constants into an explicit a priori bound and an explicit stability factor.
Recent fixed-point approaches to differential and boundary value problems proceed similarly but often stop at existence or convergence without exporting a certified $\Phi$; compare
\cite{KalkanLaiki2024AIMSMath,SinghEtAl2024JIA,KumarEtAl2025BVP}.

\subsection{Green operator formulation and explicit bounds}\label{subsec:green}

Let $[a,b]\subset\R$.
Work in $X=C([a,b])$ with $\|\cdot\|_\infty$.
Consider a second-order problem of the form
\begin{equation}\label{eq:bvp-general}
x''(t)=F(t,x(t)),\qquad t\in[a,b],
\end{equation}
with boundary conditions described below.
Assume that a Green kernel $G(t,s)$ exists for the corresponding linear boundary operator.
Then a solution of \eqref{eq:bvp-general} satisfies the integral equation
\begin{equation}\label{eq:bvp-fixedpoint}
x(t)=\ell(t)+\int_a^b G(t,s)\,F(s,x(s))\,ds,
\end{equation}
where $\ell$ is the solution of the associated linear inhomogeneous problem (or the boundary forcing term).
Define the Green operator
\begin{equation}\label{eq:TG}
(Tx)(t):=\ell(t)+\int_a^b G(t,s)\,F(s,x(s))\,ds.
\end{equation}

The contractive modulus is controlled by the kernel bound
\[
M_G:=\sup_{t\in[a,b]}\int_a^b |G(t,s)|\,ds.
\]
This is the same quantity as $(M_K)_{\mathrm{H}}$ in \Cref{def:kernel-operator}.
It is explicit once $G$ is known.

\begin{assumption}[BVP data]\label{ass:BVP}
Assume:
\begin{enumerate}[label=(B\arabic*)]
\item $\ell\in X$.
\item $G$ is measurable and $M_G<\infty$.
\item $F\colon [a,b]\times\R\to\R$ satisfies a Lipschitz bound in the second argument:
\[
|F(t,u)-F(t,v)|\le L_F|u-v|
\qquad(t\in[a,b],\ u,v\in\R),
\]
for some explicit constant $L_F\ge 0$.
\item $F_0(t):=F(t,0)$ is bounded and $M_{F_0}:=\sup_{t\in[a,b]}|F_0(t)|<\infty$.
\end{enumerate}
Set $L:=L_F M_G$.
\end{assumption}

\begin{lemma}[Invariant ball and modulus for the Green operator]\label{lem:green-invariant}
Assume \Cref{ass:BVP}.
Assume $L<1$.
Define
\begin{equation}\label{eq:R-green}
R:=\frac{\|\ell\|_\infty+M_G\,M_{F_0}}{1-L},
\qquad
C:=\mathbb{B}(0,R)\subset X.
\end{equation}
Then $T(C)\subseteq C$ and $\Lip(T|_C)\le L$.
\end{lemma}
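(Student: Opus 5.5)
The plan is to observe that the Green operator \eqref{eq:TG} is formally a Hammerstein operator \eqref{eq:TH}, with $g=\ell$, $K=G$, $f=F$. The data in \Cref{ass:BVP} translate directly into \Cref{ass:H}: (B1) gives (H1), (B2) gives (H2) with $(M_K)_{\mathrm{H}}=M_G$, (B3) gives the Lipschitz bound in (H3) with $L_f=L_F$, and (B4) gives (H4) with $M_{f_0}=M_{F_0}$. Under this dictionary the radius \eqref{eq:R-green} coincides with \eqref{eq:R-hamm}. The statement then follows from \Cref{lem:hammerstein-invariant}. The continuity of $F$ in the second variable required in (H3) is already implied by the Lipschitz bound in (B3).

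To keep the argument self-contained, I would also give the direct estimate. For $x\in C$, the pointwise bound $|F(s,x(s))|\le M_{F_0}+L_F\|x\|_\infty\le M_{F_0}+L_F R$ yields
\[
\|Tx\|_\infty\le \|\ell\|_\infty+M_G(M_{F_0}+L_F R)=\|\ell\|_\infty+M_G M_{F_0}+L R .
\]
By the choice of $R$, this equals $(1-L)R+LR=R$, so $T(C)\subseteq C$. For $x,y\in C$, the Lipschitz bound in (B3) gives
\[
|(Tx)(t)-(Ty)(t)|\le \int_a^b|G(t,s)|\,L_F\|x-y\|_\infty\,ds\le L\|x-y\|_\infty ,
\]
and taking the supremum over $t$ gives $\Lip(T|_C)\le L$.

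The main obstacle is confirming that $Tx$ actually lies in $X=C([a,b])$, i.e.\ that $t\mapsto\int_a^b G(t,s)F(s,x(s))\,ds$ is continuous and not merely bounded. Measurability of $G$ and $M_G<\infty$ do not by themselves ensure this, and \Cref{ass:H} carries the same implicit requirement. I would handle it in the same way the Hammerstein lemma does, treating $T\colon X\to X$ as part of the standing setup as in \eqref{eq:TG}. If pressed, I would add that for the classical Green kernels (continuous on $[a,b]^2$), continuity follows from uniform continuity of $G$ together with boundedness of $s\mapsto F(s,x(s))$ on $C$. The latter holds by the pointwise bound above, provided $s\mapsto F(s,x(s))$ is measurable.
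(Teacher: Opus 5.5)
Your proposal is correct and follows the paper's proof, which simply states that the argument is identical to \Cref{lem:hammerstein-invariant} under the substitution $g\to\ell$, $K\to G$, $f\to F$; your direct estimates $\|Tx\|_\infty\le \|\ell\|_\infty+M_GM_{F_0}+LR=R$ and $\|Tx-Ty\|_\infty\le L\|x-y\|_\infty$ reproduce exactly that computation. The caveat you raise, that $Tx$ must be continuous and $s\mapsto F(s,x(s))$ measurable, is a real implicit assumption that the paper also leaves unstated, so treating $T\colon X\to X$ as part of the standing setup is consistent with the paper.
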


\begin{proof}
The proof is identical to \Cref{lem:hammerstein-invariant} with $K$ replaced by $G$ and $f$ replaced by $F$.
\end{proof}

\subsection{A quantitative theorem for a prototypical BVP}\label{subsec:proto-bvp}

We state one prototype.
It is representative.
It keeps the constants visible.

\begin{example}[Dirichlet problem]\label{ex:dirichlet}
Let $a<b$.
Consider
\begin{equation}\label{eq:dirichlet}
x''(t)=F(t,x(t)),\qquad x(a)=\alpha,\quad x(b)=\beta.
\end{equation}
Assume $F$ satisfies \Cref{ass:BVP}(B3)--(B4).
Let $\ell$ be the linear interpolant
\[
\ell(t):=\alpha+\frac{t-a}{b-a}(\beta-\alpha).
\]
Let $G$ be the Dirichlet Green kernel on $[a,b]$.
Then any solution of \eqref{eq:dirichlet} satisfies \eqref{eq:bvp-fixedpoint}.
\end{example}

The quantitative theorem uses only $M_G$ and $L_F$.
The bound is explicit.
The stopping index is explicit.

\begin{theorem}[Dirichlet BVP: certified existence, uniqueness, and rate]\label{thm:dirichlet-quant}
Assume the setting of \Cref{ex:dirichlet}.
Assume $M_G<\infty$ and $L:=L_F M_G<1$.
Let $R$ and $C$ be as in \eqref{eq:R-green}.
Then \eqref{eq:dirichlet} has a unique solution $x^\ast\in C$.
For any $x_0\in C$, the Picard iterates $x_{n+1}:=Tx_n$ satisfy
\[
\|x_n-x^\ast\|_\infty\le \Phi_{\mathrm{BVP}}(n;L,\delta_0):=\frac{L^{\,n}}{1-L}\,\delta_0,
\qquad
\delta_0:=\|Tx_0-x_0\|_\infty.
\]
In particular, if $n\ge N_{\mathrm{geo}}(\varepsilon;L,\delta_0)$ from \Cref{prop:N-geo},
then $\|x_n-x^\ast\|_\infty\le \varepsilon$.
\end{theorem}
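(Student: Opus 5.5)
The argument reduces \Cref{thm:dirichlet-quant} to the abstract results already proved, by first checking that the Dirichlet problem falls under \Cref{ass:BVP} and then quoting \Cref{lem:green-invariant}, \Cref{thm:rate} and \Cref{prop:N-geo}.

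First I fix the data. In \Cref{ex:dirichlet}, $\ell$ is the linear interpolant, which is continuous, so (B1) holds. The Dirichlet Green kernel is
\[
G(t,s)=-\frac{(t-a)(b-s)}{b-a}\ (s\ge t),\qquad G(t,s)=-\frac{(s-a)(b-t)}{b-a}\ (s\le t).
\]
It is continuous, hence measurable. A direct computation gives $\int_a^b|G(t,s)|\,ds=(t-a)(b-t)/2$, so $M_G=(b-a)^2/8<\infty$. This gives (B2), and (B3)--(B4) are assumed.

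Second, I apply \Cref{lem:green-invariant}. With $L=L_FM_G<1$ and $R$ as in \eqref{eq:R-green}, the set $C=\mathbb{B}(0,R)$ is closed and $T$-invariant, and $\Lip(T|_C)\le L$. I take the geometric gauge $\omega(r)=Lr$, so $\kappa=L$ on every range. This avoids the radius subtlety in \Cref{thm:framework-fixed-point} (centre $0$ versus $x_0$, and $2R$ versus $R$): the proof there only uses $d_{n+1}\le\kappa d_n$ and $\Lip(T|_C)\le\kappa$, and both hold globally on $C$. \Cref{thm:framework-fixed-point} then gives a unique fixed point $x^\ast\in C$ of $T$. \Cref{thm:rate} gives $\|x_n-x^\ast\|_\infty\le L^n\delta_0/(1-L)$ for any $x_0\in C$. \Cref{prop:N-geo} gives the stopping index; the degenerate cases $\delta_0=0$ (then $x_0=x^\ast$) and $L=0$ (then $x_1=x^\ast$) are trivial.

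Third, I transfer existence and uniqueness from the fixed point equation back to \eqref{eq:dirichlet}. This is the step I expect to be the main obstacle, since \Cref{ex:dirichlet} only records the direction ``solution $\Rightarrow$ fixed point''. I need the converse: if $x=Tx$ with $x$ continuous, then $s\mapsto F(s,x(s))$ must be continuous. So I will assume $F$ is jointly continuous, which is implicit in the classical reduction; alternatively I would work with Carathéodory solutions. Under that assumption, differentiating the Green representation twice gives $x''=F(t,x)$, and $G(a,\cdot)=G(b,\cdot)=0$ gives $x(a)=\alpha$ and $x(b)=\beta$. So $x^\ast$ solves \eqref{eq:dirichlet}. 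Conversely, any solution lying in $C$ is a fixed point of $T$ in $C$, so it equals $x^\ast$. Uniqueness is therefore claimed only within $C$, exactly as stated.
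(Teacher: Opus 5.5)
Your proposal is correct and follows the paper's proof, which invokes \Cref{lem:green-invariant} to get that $T$ is an $L$-contraction of $C$ into itself, then applies \Cref{thm:rate} with $\kappa=L$ and \Cref{prop:N-geo} for the stopping index. Your extra steps are points the paper's one-line proof leaves implicit, and you handle them correctly: computing $M_G=(b-a)^2/8$, noting that the geometric gauge makes the centre and radius in \Cref{thm:framework-fixed-point} irrelevant, and checking that a fixed point of $T$ actually solves \eqref{eq:dirichlet}, which needs continuity of $F$ in $t$ or a Carath\'eodory notion of solution.
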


\begin{proof}
By \Cref{lem:green-invariant}, $T\colon C\to C$ is an $L$-contraction.
Apply \Cref{thm:rate} with $\kappa=L$.
The stopping statement is \Cref{prop:N-geo}.
\end{proof}

This theorem is the BVP analogue of \Cref{thm:hammerstein-quant}.
The literature contains many fixed-point proofs for BVPs under generalized contractions and iterative schemes.
The distinctive feature here is that the data $(L_F,M_G,\delta_0)$ are exported into an explicit bound.
For iterative-scheme perspectives and stability claims in related settings, see
\cite{KalkanLaiki2024AIMSMath,Aldosary2024JIA,KumarEtAl2025BVP,SinghEtAl2024JIA}.

\subsection{Parameter dependence}\label{subsec:param-bvp}

We now formalize stability with respect to perturbations of the model.
The statement is a direct specialization of \Cref{thm:stability}.
We keep it at the level of verifiable quantities.

Let $T$ be the Green operator \eqref{eq:TG} induced by $(\ell,G,F)$.
Let $\widetilde T$ be induced by $(\widetilde\ell,\widetilde G,\widetilde F)$ on the same domain $[a,b]$.
Assume that $C=\mathbb{B}(0,R)$ is invariant for both maps and that both maps have Lipschitz modulus $\le L<1$ on $C$.

\begin{theorem}[BVP parameter dependence]\label{thm:bvp-param}
Assume \Cref{ass:BVP} with $L<1$ for both $T$ and $\widetilde T$ on the same invariant ball $C$.
Let $x^\ast\in\Fix(T)\cap C$ and $\widetilde x^\ast\in\Fix(\widetilde T)\cap C$.
Then
\[
\|x^\ast-\widetilde x^\ast\|_\infty\le \frac{1}{1-L}\,\varepsilon_C(T,\widetilde T),
\qquad
\varepsilon_C(T,\widetilde T):=\sup_{x\in C}\|Tx-\widetilde T x\|_\infty.
\]
\end{theorem}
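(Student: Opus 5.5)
The estimate follows from \Cref{thm:stability}, or symmetrically from \Cref{cor:two-sided-stability}, once the Green operators are placed in the framework of \Cref{sec:framework}. Set $X=C([a,b])$ with $\|\cdot\|_\infty$; this space is complete by \Cref{def:Cab}. Both $T$ and $\widetilde T$ map $C=\mathbb{B}(0,R)$ into itself by hypothesis. Since $\Lip(T|_C)\le L<1$, the geometric gauge $\omega(r)=Lr$ makes $T$ a two-point gauge contraction on $C$ in the sense of \Cref{def:two-point-gauge}. For this gauge $\kappa(R';\theta)=L$ for every radius $R'>0$, so the certified modulus is $\kappa_T=L$. The same holds for $\widetilde T$, so we may take $\kappa=\max\{\kappa_T,\kappa_{\widetilde T}\}=L$.

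The stability argument itself is a single computation. Using $x^\ast=Tx^\ast$, $\widetilde x^\ast=\widetilde T\widetilde x^\ast$ and the triangle inequality,
\[
\|x^\ast-\widetilde x^\ast\|_\infty\le \|Tx^\ast-T\widetilde x^\ast\|_\infty+\|T\widetilde x^\ast-\widetilde T\widetilde x^\ast\|_\infty\le L\|x^\ast-\widetilde x^\ast\|_\infty+\varepsilon_C(T,\widetilde T).
\]
The first term uses the Lipschitz bound on $C$; this is legitimate because both fixed points lie in $C$. The second term uses $\widetilde x^\ast\in C$ and the definition of $\varepsilon_C$ in \Cref{def:eps-TS}. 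If $\varepsilon_C(T,\widetilde T)=\infty$ there is nothing to prove. Otherwise, since $L<1$ and the distance is finite, we rearrange to obtain the claimed bound with factor $(1-L)^{-1}$.

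The only point needing care is the radius bookkeeping in \Cref{def:certified-kappa} and the proof of \Cref{thm:stability}. That proof asserts $\dist(x^\ast,y^\ast)\le R$, but $C=\mathbb{B}(0,R)$ only yields $\le 2R$. For the linear gauge this gap is harmless: $\omega(r)=Lr$ holds for all $r$, and the Lipschitz bound on $C$ is used directly, with no restriction to a range of $r$. To avoid this subtlety altogether, I would use the direct two-line estimate above rather than citing the general theorem verbatim, and note that it coincides with \Cref{thm:stability} when $\kappa_T=L$.
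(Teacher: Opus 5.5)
Your proof is correct and follows the paper's route. The paper simply applies \Cref{cor:two-sided-stability} with $\kappa=L$, and your two-line estimate is exactly the proof of \Cref{thm:stability} specialized to the linear gauge $\omega(r)=Lr$ (only the modulus of $T$ is used). Your remark that the proof of \Cref{thm:stability} only justifies $\dist(x^\ast,y^\ast)\le 2R$, not $\le R$, is accurate. As you note, it is harmless here because $\kappa(R';\theta)=L$ for every $R'>0$.
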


\begin{proof}
Apply \Cref{cor:two-sided-stability} with $\kappa=L$.
\end{proof}

The quantity $\varepsilon_C(T,\widetilde T)$ can be bounded in terms of deviations in the data.
One obtains explicit parameter sensitivity bounds.

\begin{proposition}[Explicit perturbation bound in model parameters]\label{prop:bvp-eps-bound}
Assume the hypotheses of \Cref{thm:bvp-param}.
Assume in addition that $\widetilde F=F$ and $C=\mathbb{B}(0,R)$.
Set
\[
\Delta_\ell:=\|\ell-\widetilde\ell\|_\infty,
\qquad
\Delta_G:=\sup_{t\in[a,b]}\int_a^b |G(t,s)-\widetilde G(t,s)|\,ds,
\qquad
B_F:=M_{F_0}+L_F R.
\]
Then
\[
\varepsilon_C(T,\widetilde T)\le \Delta_\ell+\Delta_G\,B_F,
\]
and hence
\[
\|x^\ast-\widetilde x^\ast\|_\infty\le \frac{1}{1-L}\,(\Delta_\ell+\Delta_G\,B_F).
\]
\end{proposition}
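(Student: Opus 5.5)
The plan is to estimate $\varepsilon_C(T,\widetilde T)$ pointwise and then apply \Cref{thm:bvp-param}. Since $\widetilde F=F$, for $x\in C$ and $t\in[a,b]$ we have
\[
(Tx)(t)-(\widetilde Tx)(t)=\bigl(\ell(t)-\widetilde\ell(t)\bigr)+\int_a^b\bigl(G(t,s)-\widetilde G(t,s)\bigr)F(s,x(s))\,ds.
\]
The first term is bounded in absolute value by $\Delta_\ell$.

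For the integral term, we need a uniform bound on $F(s,x(s))$. By (B3)--(B4) and $\|x\|_\infty\le R$,
\[
|F(s,x(s))|\le |F(s,0)|+L_F|x(s)|\le M_{F_0}+L_FR=B_F.
\]
This is the same estimate used in \Cref{lem:hammerstein-invariant}. It then gives
\[
\Bigl|\int_a^b (G-\widetilde G)(t,s)F(s,x(s))\,ds\Bigr|\le B_F\int_a^b|G(t,s)-\widetilde G(t,s)|\,ds\le B_F\Delta_G .
\]

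Taking the supremum over $t$ and then over $x\in C$ yields $\varepsilon_C(T,\widetilde T)\le \Delta_\ell+\Delta_G B_F$. Inserting this into the conclusion of \Cref{thm:bvp-param}, whose hypotheses are assumed, gives the second inequality.

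The only point needing care is measurability and integrability of $s\mapsto (G-\widetilde G)(t,s)F(s,x(s))$. If $\Delta_G=\infty$ the bound is trivial. Otherwise the integrand is dominated by $B_F|G-\widetilde G|(t,\cdot)$, and we take $F(\cdot,x(\cdot))$ measurable, as implicit in the definition of $T$ in \eqref{eq:TG}. No other obstacle is expected, since the argument is a direct triangle-inequality estimate.
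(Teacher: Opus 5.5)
Your proof is correct and matches the paper's argument: you split $(Tx)(t)-(\widetilde Tx)(t)$ into the $\ell$-deviation and the kernel-deviation integral, bound $|F(s,x(s))|\le M_{F_0}+L_FR=B_F$ on $C$, take suprema in $t$ and in $x\in C$, and then insert the resulting bound into \Cref{thm:bvp-param}. Your aside about the case $\Delta_G=\infty$ is harmless but unnecessary: (B2) holds for both $T$ and $\widetilde T$ under the hypotheses, so $\Delta_G\le M_G+M_{\widetilde G}<\infty$.
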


\begin{proof}
For $x\in C$ and $t\in[a,b]$,
\[
(Tx)(t)-(\widetilde T x)(t)
=
(\ell(t)-\widetilde\ell(t))
+\int_a^b (G(t,s)-\widetilde G(t,s))\,F(s,x(s))\,ds.
\]
Take absolute values and use
$|F(s,x(s))|\le |F(s,0)|+L_F|x(s)|\le M_{F_0}+L_F R=B_F$ for $x\in C$.
Then take the supremum in $t$ and in $x\in C$.
\end{proof}

\begin{remark}[Meaning of the constants]\label{rem:bvp-constants}
The factor $(1-L)^{-1}$ is unavoidable; see \Cref{prop:sharpness}.
The remaining quantities are measurable.
$\Delta_G$ is a kernel deviation.
$B_F$ is a bound on the forcing range on $C$.
The estimate is the metric analogue of a resolvent stability bound.
It is the exact statement needed for data perturbations in numerical implementations.
\end{remark}


\section{Worked examples and numerical verification}\label{sec:examples}

We record two concrete models illustrating how kernel/Lipschitz data produce certified rates via \Cref{thm:rate} and stability via \Cref{thm:stability}.

\subsection{Hammerstein example}\label{subsec:ex-hamm}
Let $X=C([0,1])$ with $\|\cdot\|_\infty$. Set $g(t)=t$, $K(t,s)=t+s$, and $f(s,u)=\frac13u$. Then $(M_K)_{\mathrm H}=\sup_{t\in[0,1]}\int_0^1(t+s)\,ds=\frac32$ and $L_f=\frac13$, hence the certified modulus is
\[
\kappa=L_f(M_K)_{\mathrm H}=\frac12.
\]
Choosing any $x_0\in C=\mathbb{B}(0,R)$ with $R=\|g\|_\infty/(1-\kappa)=2$, the Picard iterates satisfy
\[
\|x_n-x^\ast\|_\infty\le \frac{\kappa^n}{1-\kappa}\,\delta_0
\qquad\text{with }\ \delta_0=\|Tx_0-x_0\|_\infty,
\]
and the residual certificate $\|x_n-Tx_n\|_\infty\le (1-\kappa)\varepsilon$ implies $\|x_n-x^\ast\|_\infty\le \varepsilon$ by \Cref{prop:residual-to-error}.

\subsection{BVP example}\label{subsec:ex-bvp}
Consider a second-order boundary value problem reduced to \eqref{eq:bvp-fixedpoint} with Green kernel $G$ and nonlinearity $F$ as in \Cref{sec:bvp}. If $M_G=\sup_t\int_a^b|G(t,s)|\,ds$ and $F$ is Lipschitz in the second argument with constant $L_F$, then $\kappa=L_FM_G$. Whenever $\kappa<1$, the solution in the invariant ball is unique and the same certified a priori and stability bounds apply, with stability factor $(1-\kappa)^{-1}$.

\section{Scope and limits of the unified theorem}\label{sec:scope}

The framework is a filter.
It admits mappings for which quantitative certification is possible.
It rejects mappings for which only qualitative existence is available from the data.
This section records both facts.
Survey-level accounts of enriched and generalized contractions provide context for the inclusions; see
\cite{BerindePacurar2024EnrichedSurvey,Jachymski2024RACSAM,Proinov2020JFPTA}.

\subsection{Inclusions of classical contractive classes}\label{subsec:inclusions}

We list standard classes subsumed by \Cref{sec:framework}.
The criterion is explicit.
One must exhibit a gauge $\omega$ and compute $\kappa(R;\theta)$ on a verifiable working set $C$.

\begin{proposition}[Banach contractions]\label{prop:incl-banach}
Let $(X,\dist)$ be a metric space and let $T\colon X\to X$ satisfy $\dist(Tx,Ty)\le q\,\dist(x,y)$ for some $q\in(0,1)$.
Let $C\subseteq X$ be any closed $T$-invariant set.
Then $T$ is a two-point gauge contraction on $C$ with $\omega(r)=qr$ and $\kappa(R)=q$ for any $R>0$.
Hence all conclusions of \Cref{sec:main-theorem,sec:apriori,sec:stability,sec:inexact} hold.
\end{proposition}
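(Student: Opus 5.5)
The plan is to verify the items of the admissible data packet in \Cref{def:data-packet} one by one, with the gauge $\omega(r)=qr$, and then invoke the main results.

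\textbf{Step 1: the gauge axioms.} First check that $\omega(r)=qr$ is a contractive gauge in the sense of \Cref{def:gauge}:
\begin{itemize}
\item (G1): $\omega(0)=0$;
\item (G2): $\omega$ is nondecreasing because $q>0$;
\item (G3): $qr<r$ for every $r>0$ because $q<1$.
\end{itemize}

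\textbf{Step 2: the contraction inequality on $C$.} Restrict the hypothesis $\dist(Tx,Ty)\le q\,\dist(x,y)$ to $x,y\in C$. Since $C$ is closed and $T$-invariant, this is exactly \Cref{def:two-point-gauge} with this $\omega$.

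\textbf{Step 3: the local modulus.} For every $r>0$ we have $\omega(r)/r=q$. Hence the supremum in \Cref{def:kappaR} over $0<r\le R$ equals $q$ for every $R>0$, so $\kappa(R)=q<1$. This gives (D5). Note that $\kappa$ does not depend on $R$, which makes the enlargement step in the proof of \Cref{thm:framework-fixed-point} (replacing $R$ by $2R$) harmless.

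\textbf{Step 4: the remaining packet conditions.} Condition (D1) needs completeness of $X$. The proposition as stated only says ``metric space,'' so I would read completeness of $(X,\dist)$ as implicit. This is consistent with the standing assumption at the start of \Cref{sec:framework}, and I would state the reading explicitly.

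Condition (D3) requires $C\subseteq\mathbb{B}(x_0,R)$, which fails for an unbounded $C$ such as $C=X$. This is the main obstacle. I would handle it by observing that the later arguments use only $\omega(r)\le\kappa r$ on the relevant range of distances, and for the geometric gauge this inequality holds for all $r\ge0$. The radius restriction is therefore never active. Concretely:
\begin{itemize}
\item Every step in the proofs of \Cref{lem:defect-recursion}, \Cref{thm:framework-fixed-point}, \Cref{thm:rate}, \Cref{prop:residual-to-error}, \Cref{thm:stability} and \Cref{lem:one-step-perturbed} uses $\dist(Tx,Ty)\le\kappa\,\dist(x,y)$ only for points of $C$, with no bound on $\dist(x,y)$ needed.
\item Alternatively, for bounded $C$ one may take any $R\ge\sup_{x\in C}\dist(x_0,x)$, and (D3) holds literally.
\end{itemize}

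\textbf{Step 5: conclusion.} With the packet (or its radius-free version) verified and $\kappa=q$, apply in turn:
\begin{itemize}
\item \Cref{thm:framework-fixed-point} and \Cref{thm:rate} for existence, uniqueness and the a priori bound $q^n\delta_0/(1-q)$;
\item \Cref{prop:N-geo} and \Cref{prop:residual-to-error} for the stopping rules;
\item \Cref{thm:stability} for stability, with constant $(1-q)^{-1}$;
\item \Cref{thm:inexact-apriori} for inexact orbits.
\end{itemize}
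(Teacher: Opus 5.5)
Your proposal is correct and follows the paper's route. The paper's proof just says the claim is immediate from the definitions together with \Cref{thm:rate,thm:stability,thm:inexact-apriori}, which is your verification of the gauge axioms, the two-point inequality on $C$, and $\kappa(R)=q$, followed by invoking the main results. Your explicit treatment of completeness (D1) and of the radius condition (D3) for unbounded $C$ is more careful than the paper, which passes over both points; the key observation is that $\omega(r)\le\kappa r$ holds for all $r\ge 0$ for the geometric gauge, so the radius never enters the proofs.
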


\begin{proof}
Immediate from definitions and \Cref{thm:rate,thm:stability,thm:inexact-apriori}.
\end{proof}

\begin{proposition}[Proinov-type generalized contractions]\label{prop:incl-proinov}
Let $(X,\dist)$ be a complete metric space, let $C\subseteq X$ be closed and $T$-invariant, and assume
\[
\dist(Tx,Ty)\le \omega(M_T(x,y);\theta)\qquad(x,y\in C),
\]
with $M_T$ as in \Cref{def:proinov-gauge} and with a contractive gauge $\omega(\cdot;\theta)$.
Assume $C\subseteq \mathbb{B}(x_0,R)$ for some explicit $R$ and $\kappa(R;\theta)<1$.
Then $T$ satisfies the framework.
\end{proposition}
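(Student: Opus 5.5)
The statement asks us to match the hypotheses against the items of an admissible data packet, \Cref{def:data-packet}, and then invoke \Cref{thm:framework-fixed-point}. I will check (D1)--(D5) one at a time. (D1) is the assumed completeness of $(X,\dist)$. (D2) holds because $C$ is closed and $T$-invariant; nonemptiness follows from $x_0\in C$, which is implicit in $C\subseteq\mathbb{B}(x_0,R)$ with $x_0$ the base point (I would state this as part of the reading of the hypothesis). (D3) is the explicit-radius assumption. (D4) holds because the displayed inequality is exactly \Cref{def:proinov-gauge}, since $\omega(\cdot;\theta)$ is a contractive gauge satisfying (G1)--(G3). (D5) is the assumption $\kappa(R;\theta)<1$.

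With the packet verified, \Cref{thm:framework-fixed-point} gives a unique fixed point in $C$ and convergence of the Picard orbit. The downstream results then apply verbatim: \Cref{thm:rate}, \Cref{prop:residual-to-error}, the stability estimate \Cref{thm:stability} (its Proinov case uses $M_T(x^\ast,y^\ast)=\dist(x^\ast,y^\ast)$ at fixed points), and \Cref{thm:inexact-apriori}. This is what "satisfies the framework" means.

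The main obstacle is the working range. The bound $\omega(r)\le\kappa r$ is certified only for $r\le R$, but points of $C$ can be up to $2R$ apart. The Proinov control $M_T$ may also be as large as $2R$. Inside \Cref{lem:defect-recursion} this causes no difficulty, because only monotonicity of $\omega$ and (G3) are used. For the geometric rate, I would proceed as the proof of \Cref{thm:framework-fixed-point} does and interpret $\kappa$ as computed on $[0,2R]$; the "explicit $R$" hypothesis can be read as already covering $\operatorname{diam}C$. I would also remark that $\kappa$ bounds the defects only once $d_n\le R$.

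For the steps in \Cref{thm:inexact-apriori} and \Cref{prop:residual-to-error} that use a pointwise $\kappa$-Lipschitz bound, I would point out a gap. A Proinov gauge gives $\dist(Tx,Ty)\le\kappa M_T(x,y)$, not $\kappa\dist(x,y)$, in general. So I would state those conclusions only for the fixed-point-anchored inequalities, where $M_T$ reduces correctly. With $y=x^\ast$, one gets $M_T(x,x^\ast)\le\max\{\dist(x,x^\ast),\dist(x,Tx)\}$, which still yields a usable contraction-type bound after a short case analysis.
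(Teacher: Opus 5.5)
Your proposal is correct and follows the paper's argument. The paper simply declares the proposition tautological from \Cref{def:data-packet}, because ``satisfies the framework'' means only that (D1)--(D5) hold, so your downstream discussion is not needed for this statement. One caution about that discussion: the fixed-point-anchored case analysis does recover \Cref{prop:residual-to-error} for Proinov-type maps, but for \Cref{thm:inexact-apriori} it yields only $\dist(Tx,x^\ast)\le\frac{\kappa}{1-\kappa}\,\dist(x,x^\ast)$ on the working range, and this bound is sharp. To see this, take $C=\{0,1,-c\}\subset\R$ with $c=\kappa/(1-\kappa)$, $T0=T(-c)=0$ and $T1=-c$. The Proinov inequality holds with $\omega(r)=\kappa r$, yet $\dist(T1,0)=c>\kappa=\kappa\,\dist(1,0)$. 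So the $\kappa$-recursion genuinely fails there, and no contraction factor is available at all once $\kappa\ge\frac12$.
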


\begin{proof}
This is tautological from \Cref{def:data-packet}.
\end{proof}

The content here is not the inclusion itself.
It is the explicit certificate $\kappa(R;\theta)$.
This quantity turns a qualitative Proinov-type condition into a computable rate.
Proinov-style classes are a canonical starting point for such control; see \cite{Proinov2020JFPTA}.

\begin{proposition}[Reich--Suzuki type Lipschitz consequences]\label{prop:incl-reich-suzuki}
Let $(X,\|\cdot\|)$ be a normed space and let $C\subseteq X$ be convex.
Assume $T\colon C\to C$ belongs to a Reich--Suzuki type class for which one can certify a Lipschitz constant
$L_C<1$ on $C$ (for example by an explicit inequality implying $\|Tx-Ty\|\le L_C\|x-y\|$ for all $x,y\in C$).
Then $T$ satisfies the framework with $\omega(r)=L_C r$ and $\kappa=L_C$.
\end{proposition}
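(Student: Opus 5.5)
The plan is to reduce \Cref{prop:incl-reich-suzuki} to the definitional checks of \Cref{def:two-point-gauge} and \Cref{def:data-packet}, in the same way as \Cref{prop:incl-banach}. The Reich--Suzuki membership is used only through the certified inequality $\|Tx-Ty\|\le L_C\|x-y\|$ for $x,y\in C$. Nothing else about that class is needed.

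\emph{Gauge axioms.} Set $\omega(r):=L_C r$ and verify (G1)--(G3) of \Cref{def:gauge}:
\begin{itemize}
\item[(G1)] $\omega(0)=0$;
\item[(G2)] $\omega$ is nondecreasing, since $L_C\ge 0$ (a Lipschitz constant is nonnegative);
\item[(G3)] $\omega(r)=L_C r<r$ for $r>0$, since $L_C<1$.
\end{itemize}
The certified inequality is then exactly $\|Tx-Ty\|\le\omega(\|x-y\|)$ on $C$. So $T$ is a two-point gauge contraction on $C$, provided $C$ is $T$-invariant in the sense of \Cref{def:invariant-set}.

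\emph{Modulus.} Compute $\kappa(R)=\sup_{0<r\le R}L_C r/r=L_C$ for every $R>0$. Hence (D5) holds with $\kappa=L_C$, independently of the radius.

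\emph{Main obstacle.} The remaining items (D1)--(D3) of \Cref{def:data-packet} need completeness, closedness and boundedness $C\subseteq\mathbb{B}(x_0,R)$. The statement only assumes a convex subset of a normed space. I would resolve this as follows.
\begin{itemize}
\item Read ``satisfies the framework'' as: whenever $X$ is complete and $C$ is closed (the standing assumptions of \Cref{sec:framework}), the packet is admissible. I would state this reading explicitly.
\item Boundedness is harmless, because $\kappa(R)=L_C$ for every $R$. Alternatively, if one insists on a bounded working set, take $C\cap\mathbb{B}(x_0,R)$ with $R=\|Tx_0-x_0\|/(1-L_C)$.
\item Invariance of that ball follows from
\[
\|Tx-x_0\|\le L_C\|x-x_0\|+\|Tx_0-x_0\|\le L_C R+(1-L_C)R=R .
\]
\item That set is closed and convex whenever $C$ is, so the earlier theorems apply verbatim with $\kappa=L_C$.
\end{itemize}
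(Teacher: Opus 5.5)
Your proposal is correct and follows the paper's route. The paper's proof just invokes \Cref{prop:incl-banach}, which amounts to your check of (G1)--(G3) for $\omega(r)=L_C r$ and $\kappa(R)=L_C$ for every $R>0$; your explicit handling of (D1)--(D3) (completeness and closedness as standing assumptions, or restriction to the $T$-invariant set $C\cap\mathbb{B}(x_0,R)$ with any $R>0$, $R\ge\|Tx_0-x_0\|/(1-L_C)$) fills in hypotheses the paper's one-line argument leaves implicit, since \Cref{prop:incl-banach} itself presupposes a closed invariant $C$ and a map defined on all of $X$.
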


\begin{proof}
Once $\|Tx-Ty\|\le L_C\|x-y\|$ is certified on $C$, apply \Cref{prop:incl-banach}.
\end{proof}

This proposition is deliberately conditional.
Some Reich--Suzuki conditions yield nonexpansive behavior rather than contractive behavior.
Then $\kappa<1$ is unavailable.
In that regime one needs different moduli and different rates.
Recent work on Reich--Suzuki nonexpansive mappings and approximations illustrates the diversity of hypotheses; see
\cite{ShammakyAli2025AIMSMath,IshtiaqBatoolHussainAlsulami2025JIA}.

\begin{proposition}[Averaged-operator regime as a contractive subcase]\label{prop:incl-averaged-subcase}
Let $H$ be a Hilbert space and let $T\colon H\to H$ be $\alpha$-averaged with $\alpha\in(0,1)$.
If one can certify that $T$ is a strict contraction on a closed invariant set $C$ with Lipschitz constant $L_C<1$,
then the framework applies on $C$ with $\kappa=L_C$.
\end{proposition}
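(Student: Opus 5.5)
The statement is essentially a reduction to \Cref{prop:incl-banach}, so the plan is to manufacture the data packet of \Cref{def:data-packet} from the certified Lipschitz bound. I would first fix the setting: $H$ is a Hilbert space, hence complete with $\dist(x,y)=\|x-y\|$, so (D1) holds. By hypothesis $C$ is nonempty, closed and $T$-invariant, which gives (D2). The averagedness of $T$ is not used to produce the constant. It only records that $T$ is nonexpansive, so the certified bound $L_C<1$ is a genuine strengthening on $C$. Concretely, I would use the hypothesis in the form
\[
\|Tx-Ty\|\le L_C\|x-y\|\qquad(x,y\in C).
\]

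Next I would take the geometric gauge $\omega(r)=L_C r$ from (F1) of \Cref{rem:gauge-families}. Conditions (G1)--(G3) are immediate, with (G3) coming from $L_C<1$. Then $T$ is a two-point gauge contraction on $C$ in the sense of \Cref{def:two-point-gauge}, which is (D4). For any $R>0$, \Cref{def:kappaR} gives $\kappa(R)=\sup_{0<r\le R}L_C r/r=L_C<1$, which is (D5). The value of $\kappa$ does not depend on $R$.

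The only point needing care is (D3), the containment $C\subseteq\mathbb{B}(x_0,R)$. The statement does not assume $C$ is bounded, so I would handle this in one of two ways.

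\textbf{Option 1: $C$ is bounded.} Take $R=\operatorname{diam}C$ plus the distance to any $x_0\in C$.

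\textbf{Option 2: $C$ is not bounded.} Use the fact that for the geometric gauge the working radius plays no role, since $\kappa(R)=L_C$ for every $R$. I would invoke \Cref{prop:incl-banach}, whose statement already asserts that all conclusions of \Cref{sec:main-theorem,sec:apriori,sec:stability,sec:inexact} hold for any closed invariant $C$ with $\kappa=q$. Its proof needs only $\omega(r)\le qr$ for all $r$, not a bounded region.

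I expect this bookkeeping to be the main, if minor, obstacle. I would resolve it by appealing directly to \Cref{prop:incl-banach} with $q=L_C$, restricting $T$ to the complete metric space $(C,\|\cdot\|)$ so that the Banach hypothesis holds on that space. This gives the framework on $C$ with $\kappa=L_C$, as claimed.
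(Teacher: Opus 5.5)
Your proposal is correct and follows the paper's proof, which is the single line ``Apply \Cref{prop:incl-banach}.'' Your extra bookkeeping is what makes that one-line appeal rigorous. First, you restrict $T$ to the complete metric space $(C,\|\cdot\|)$, so that the global Lipschitz hypothesis of \Cref{prop:incl-banach} is actually met. Second, you observe that for the geometric gauge $\omega(r)=L_C r$ the radius in (D3) is immaterial, since $\kappa(R)=L_C$ for every $R>0$.
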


\begin{proof}
Apply \Cref{prop:incl-banach}.
\end{proof}

Averaged operators are central in splitting methods, and quantitative results often track residual rates rather than strict contraction; see
\cite{Combettes2023GeometrySplitting,CortildPeypouquet2024KMInertia,AtenasDaoTam2026VarStepsize,DiChiWu2026HJProximals}.
Our framework captures those cases where a strict contractive modulus can be certified on a working region.

\subsection{Cases excluded by verifiability demands}\label{subsec:excluded}

Exclusion is not a defect.
It is the meaning of the word ``verifiable''.
A hypothesis that cannot be checked from the data cannot support a certified $\Phi$.

\begin{proposition}[Nonexpansive regime]\label{prop:excluded-nonexpansive}
Assume $T$ is nonexpansive on $C$, i.e.\ $\dist(Tx,Ty)\le \dist(x,y)$ for all $x,y\in C$.
Then $\kappa(R;\theta)<1$ cannot be certified from this inequality alone.
Hence the geometric rate and the stability constant $(1-\kappa)^{-1}$ are not available from the present framework.
\end{proposition}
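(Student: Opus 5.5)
The claim is negative: the nonexpansive inequality alone cannot certify $\kappa(R;\theta)<1$. I will prove it with a witness. I exhibit a map that satisfies $\dist(Tx,Ty)\le\dist(x,y)$ on a closed invariant set $C\subseteq\mathbb{B}(x_0,R)$, and for which every admissible gauge $\omega$ in the two-point inequality of \Cref{def:two-point-gauge} has $\sup_{0<r\le R}\omega(r)/r=1$. The key observation is that the only gauge supplied by the hypothesis is $\omega(r)=r$. This already fails (G3), and it gives $\kappa=1$. So the hypothesis, taken as data, produces no contractive gauge at all. To rule out the possibility that some other certificate is hidden in it, I will show that the class of maps satisfying the inequality contains maps for which no gauge with $\kappa<1$ exists.

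For the witness I take $X=\R$ with $C=[-R,R]$, $x_0=0$, and $T$ the identity, or alternatively $T(x)=-x$. Both maps are nonexpansive, leave $C$ invariant, and are isometries, so $\dist(Tx,Ty)=\dist(x,y)$. Suppose some $\omega$ satisfied $\omega(r)\le\kappa r$ on $(0,R]$ with $\kappa<1$. Choose $x,y\in C$ with $\dist(x,y)=r\le R$. Then $r=\dist(Tx,Ty)\le\omega(r)\le\kappa r$, which is a contradiction. So no certificate exists for this $T$, and therefore none can follow from nonexpansiveness alone. The same computation rules out the Proinov-type gauge of \Cref{def:proinov-gauge}: for $T(x)=-x$ and $y=-x$, we have $M_T(x,y)=\max\{2|x|,2|x|,2|x|,\tfrac12(0+0)\}=2|x|=\dist(Tx,Ty)$, so again $\omega(M_T)\ge M_T$.

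Next I show that the downstream conclusions genuinely fail, so the word ``hence'' in the statement is justified. For the identity map, $\Fix(T)=C$, so uniqueness fails, and Picard iterates do not move toward any designated point, so no rate $\Phi$ exists. For stability, take $T(x)=x$ and $S(x)=x+\varepsilon$, truncated to keep $[-R,R]$ invariant, or take translations on $\R$. These show that perturbations of size $\varepsilon$ can shift fixed points arbitrarily far within $C$, or destroy them, so no finite factor $(1-\kappa)^{-1}$ works. This matches the limit $\kappa\uparrow1$ in \Cref{prop:sharpness}.

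The main obstacle is formalizing ``cannot be certified from this inequality alone.'' I will read it as: no gauge with $\kappa<1$ is valid for every map in the class defined by the inequality. The isometric witnesses prove exactly this, so the difficulty is one of phrasing rather than mathematics.
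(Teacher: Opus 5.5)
Your argument is correct, and it proves more than the paper's proof does. The paper's proof is two sentences. It observes that nonexpansiveness is the two-point inequality with $\omega(r)=r$, which permits $\kappa=1$. It then asserts that no strict inequality can be deduced without extra structure. That second claim is stated rather than shown, and the ``hence'' clause is left implicit.

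You share the first observation. You then prove the non-derivability by exhibiting maps in the class that admit no contractive gauge at all. The isometries $x\mapsto x$ and $x\mapsto -x$ of $C=[-R,R]$ satisfy $\dist(Tx,Ty)=\dist(x,y)$, and, for the Proinov template, $\dist(Tx,Ty)=M_T(x,y)$. Each identity contradicts (G3) directly. You also check the downstream failures. The identity has non-unique fixed points. A truncated translation, e.g.\ $S(x)=\min\{x+\varepsilon,R\}$, has $\Fix(S)=\{R\}$ while $-R\in\Fix(T)$, so the ratio is $2R/\varepsilon$. The paper's version buys brevity; yours gives ``cannot be certified'' a precise meaning and actually justifies the ``hence''.

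One small correction: the identity is a poor witness against the rate, because its Picard orbits are stationary and hence trivially sit at a fixed point. $T(x)=-x$ is cleaner. It has the unique fixed point $0$, yet $\dist(x_n,0)=|x_0|$ for all $n$, so no bound $\Phi(n)\to 0$ can hold. Similarly, if you want a stability witness in which both maps have unique fixed points, take the nonexpansive pairs $(1-\delta)x$ and $(1-\delta)x+\varepsilon$. They give the ratio $1/\delta$, which is \Cref{prop:sharpness} with $\kappa\uparrow 1$, as you note.
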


\begin{proof}
The inequality allows $\kappa=1$.
No strict inequality can be deduced without extra structure.
\end{proof}

The quantitative theory for nonexpansive mappings typically proceeds by different mechanisms.
It replaces contraction by asymptotic regularity, Fej\'er monotonicity, or error bounds.
Recent quantitative work on residual rates for Mann and Halpern-type iterations illustrates this shift; see
\cite{ContrerasCominetti2021OptimalErrorBounds,BotNguyen2022FastKMResidual,LeusteanPinto2022AltHalpernMannRates,BravoCominettiLee2026MinimaxHalpern,PischkePowell2024StochasticHalpern}.

\begin{proposition}[Implicit or noncomputable moduli]\label{prop:excluded-implicit}
Suppose a fixed-point theorem assumes the existence of an auxiliary function
$\varphi$ (or a comparison function) without providing a computable bound for $\varphi$ on a verifiable range.
Then, even if existence and uniqueness hold, a certified bound $\Phi(n;\text{data})$ cannot be extracted from the hypothesis alone.
\end{proposition}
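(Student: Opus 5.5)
The statement is a meta-claim: an existence assumption on an auxiliary function cannot, on its own, produce a certified bound. The natural proof is therefore a non-extraction argument by construction. I will show that the hypothesis "there exists a comparison function $\varphi$ with $\dist(Tx,Ty)\le\varphi(\dist(x,y))$" is consistent with data for which every candidate rate function $\Phi(n;\text{data})$ fails. Concretely, I fix the observable data $\theta$: a working radius $R$, an initial defect $\delta_0$, and the form of the hypothesis. I then exhibit a family of maps $T_k$, all satisfying the same qualitative hypothesis with the same observable data, whose Picard orbits converge arbitrarily slowly as $k$ varies. Any $\Phi$ that depends only on the data must dominate $\dist(x_n^{(k)},x^\ast_k)$ uniformly in $k$, and the family will make this impossible for a fixed $n$.

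For the family I will work on $X=\R$ with $C=[0,R]$ and $x_0=R$. I take gauges $\varphi_k(r)=r/(1+r/k)$, or more simply $\varphi_k(r)=r-\psi_k(r)$ with defect $\psi_k(r)=r^2/(kR)$ suitably truncated so that (G1)--(G3) hold. I set $T_k(x)=\varphi_k(x)$. Each $\varphi_k$ is a contractive gauge, so existence and uniqueness hold with $x^\ast=0$, by \Cref{thm:framework-fixed-point} or by the standard Proinov-type argument. However, $\sup_{0<r\le R}\varphi_k(r)/r\to 1$ as $k\to\infty$. Since $\dist(x_n,0)=\varphi_k^{(n)}(R)$, for each fixed $n$ this tends to $R$ as $k\to\infty$.

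The next step is to normalize the observable data. The hypothesis as stated only asserts that some $\varphi$ exists and exposes no computable bound for it, so the data accessible to a certifier are the same for every $k$, apart from $\delta_0$. I will adjust by scaling, or by choosing $x_0$ depending on $k$, so that $\delta_0=\dist(x_0,T_kx_0)$ is the same for all $k$. Alternatively I can take the supremum over orbits and note that $\Phi$ would have to bound $\varphi_k^{(n)}(R)$ for all $k$. Either way, for fixed $n$ any valid $\Phi(n;\text{data})$ satisfies $\Phi(n;\text{data})\ge\sup_k\varphi_k^{(n)}(R)=R$, so $\Phi(n)\not\to0$. A certificate that does not tend to zero is not a certified rate, and by the same reasoning no stopping index $N(\varepsilon)$ exists for $\varepsilon<R$.

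The main obstacle is making "cannot be extracted" precise, since the statement is informal. I will formalize it as follows: there is no function $\Phi$ of the stated data, independent of the unobserved $\varphi$, such that $\Phi(n)\to0$ and $\dist(x_n,x^\ast)\le\Phi(n)$ for all maps satisfying the hypothesis. Fixing $\delta_0$ simultaneously across the family is the delicate computation, which I expect to settle by an explicit choice of $x_0^{(k)}$ on the slow branch of $\varphi_k$.
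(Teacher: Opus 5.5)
Your indistinguishability strategy is the right way to make ``cannot be extracted'' precise, and it would prove more than the paper does. The paper's proof only observes that the framework builds $\Phi$ from $\omega$ or $\kappa(R;\theta)$, so it produces nothing when these are not computable. However, the family you chose fails at exactly the step you postpone, once both $R$ and $\delta_0$ are held fixed, as your own list of observable data requires. For $T_k(x)=kx/(k+x)$ on $[0,R]$ you have $1/x_{n+1}=1/x_n+1/k$, hence $x_n=kx_0/(k+nx_0)$ and $\delta_0=x_0^2/(k+x_0)$. Therefore $k<x_0^2/\delta_0\le R^2/\delta_0$, and every member of the family with data $(R,\delta_0)$ obeys
\[
\dist(x_n,x^\ast)=x_n<\frac{k}{n}\le\frac{R^2}{n\,\delta_0}.
\]
So $\Phi(n;R,\delta_0):=\min\{R,\,R^2/(n\delta_0)\}$ \emph{is} a valid certificate, and it tends to $0$.

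Neither of your proposed repairs escapes this bound. Choosing $x_0^{(k)}$ on the slow branch fails because fixing $\delta_0$ forces $x_0^{(k)}>\sqrt{\delta_0 k}$, which leaves $[0,R]$ for $k>R^2/\delta_0$. Rescaling fails because $\lambda T_k(x/\lambda)=T_{\lambda k}(x)$, so the family is closed under dilation. The variant $r-r^2/(kR)$ behaves the same way: $k\le R/\delta_0$ and $x_n\le kR/n$. Your fallback of bounding $\sup_k\varphi_k^{(n)}(R)$ with $x_0=R$ silently drops $\delta_0$ from the data, since there $\delta_0=R^2/(k+R)$ varies with $k$. A smaller point: $\sup_{0<r\le R}\varphi_k(r)/r=1$ for every $k$ (let $r\downarrow 0$). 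Hence \Cref{thm:framework-fixed-point} does not apply to $T_k$, and uniqueness of $x^\ast=0$ has to be checked directly.

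The missing idea is that the family must not be self-similar: the first step has to be pinned at $\delta_0$ while later steps shrink slowly. For example, take $R>2\delta_0$, $C=[0,R]$, $x_0=0$ and $q_k\uparrow 1$, and set
\[
T_k(x):=\max\bigl\{R-q_k(R-x),\ \delta_0+\tfrac{x}{2}\bigr\}.
\]
Each $T_k$ has the following properties:
\begin{itemize}
\item it is nondecreasing with $T_k(R)=R$, so it maps $C$ into $C$;
\item it is $q_k$-Lipschitz, hence a Banach contraction with $\kappa=q_k$;
\item it satisfies $T_k(0)=\delta_0$ as soon as $R(1-q_k)\le\delta_0$.
\end{itemize}
With $e_n:=R-x_n$ one gets $e_{n+1}=\min\{q_k e_n,\ R/2-\delta_0+e_n/2\}$, and induction gives $e_n\ge (R-2\delta_0)q_k^{\,n}$. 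Hence any $\Phi(n;R,\delta_0)$ valid for all $k$ satisfies $\Phi(n;R,\delta_0)\ge R-2\delta_0$ for every $n$. This is the non-extraction statement you wanted. It holds even though existence, uniqueness and a contraction constant $<1$ are all present; only the value of the constant is withheld.
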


\begin{proof}
A certified $\Phi$ requires explicit evaluation of the gauge or of $\kappa(R;\theta)$.
If the control function is not computable from the model data, neither quantity is computable.
\end{proof}

\begin{proposition}[Working sets not certified]\label{prop:excluded-region}
If one cannot exhibit a closed $T$-invariant set $C$ on which the modulus is certified, then the framework cannot be applied.
In particular, if the invariant set is produced by a nonconstructive argument (e.g.\ compactness without explicit bounds), then $\Phi$ and the stability constant cannot be certified.
\end{proposition}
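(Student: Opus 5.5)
The statement is a meta-level claim, so the proof reduces to unpacking what the data packet of \Cref{def:data-packet} requires. The approach is to show that every certified quantity produced by the framework is a function of a working set $C$ that has been exhibited. In that case a nonconstructive $C$ yields no certified number.

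\textbf{Step 1: the framework cannot be invoked.} Conditions (D2)--(D3) require a concrete nonempty closed $T$-invariant $C$ with $C\subseteq\mathbb{B}(x_0,R)$ for an \emph{explicit} $R$. Condition (D5) requires $\kappa(R;\theta)<1$, where $\kappa$ is computed on $[0,R]$ by \Cref{def:kappaR}. If no such $C$ can be exhibited, the packet is not admissible. Consequently \Cref{thm:framework-fixed-point}, \Cref{thm:rate}, \Cref{thm:stability} and \Cref{thm:inexact-apriori} cannot be applied, since each takes the packet or the certified modulus of \Cref{def:certified-kappa} as a hypothesis. This settles the first sentence.

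\textbf{Step 2: nothing can be certified from a nonconstructive $C$.} Trace the inputs of each certificate:
\begin{itemize}
\item $\Phi_{\mathrm{geo}}(n;\kappa,\delta_0)$ in \Cref{prop:Phi-geo} needs $\kappa=\kappa(R;\theta)$, hence an explicit $R$. It also needs $x_0\in C$, which must be chosen inside a known set.
\item $\StabC=(1-\kappa_T)^{-1}$ needs the same $\kappa_T$, and also $\varepsilon_C(T,S)=\sup_{x\in C}\dist(Tx,Sx)$, which is a supremum over $C$ itself.
\end{itemize}
Suppose $C$ is obtained only from compactness with no bound. Then $R$ is not explicit, so $\kappa(R;\theta)$ cannot be evaluated. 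This is the same mechanism as \Cref{prop:excluded-implicit}: a quantity that is not computable from the model data yields a $\Phi$ that is not computable. The supremum defining $\varepsilon_C$ likewise has no computable value.

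\textbf{Main obstacle.} The statement is informal, so the real work is fixing the meaning of ``cannot be certified.'' I would interpret it, as in \Cref{prop:excluded-implicit} and \Cref{rem:excludes}, as ``not computable from the data packet.'' Under that reading the argument is purely a dependency trace.

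I would add a remark that $\kappa(R;\theta)$ is nondecreasing in $R$. So an upper bound on $R$, if one is available, would rescue certification. It is therefore exactly the absence of any explicit bound on $R$ that blocks the conclusions.
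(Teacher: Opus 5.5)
Your proposal is correct and is essentially the paper's argument: the paper's proof simply observes that the data packet of \Cref{def:data-packet} includes $C$ and an explicit radius bound, so without them the modulus $\kappa(R;\theta)$ is not defined, and your dependency trace through $\Phi_{\mathrm{geo}}$ and $\StabC$ is an expanded version of that observation. One small overstatement: $\varepsilon_C(T,S)$ can still be bounded above by a global $\sup_{x\in X}\dist(Tx,Sx)$ when one is available, but your conclusion rests on the uncomputability of $\kappa$ alone, so nothing is lost.
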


\begin{proof}
The data packet \Cref{def:data-packet} includes $C$ and a radius bound.
If these are not explicit, the modulus computation is not defined.
\end{proof}

\begin{remark}[The boundary of the method]\label{rem:boundary}
The boundary is sharp.
When $\kappa\uparrow 1$, stability necessarily blows up; see \Cref{prop:sharpness}.
When only $\kappa=1$ is available, the method does not claim more.
A different quantitative apparatus is required.
\end{remark}

\begin{remark}[Why this restriction is principled]\label{rem:principled}
A theorem that asserts existence without exporting a certificate is compatible with computation only by accident.
A theorem that exports $\Phi$ and $\StabC$ is compatible by design.
The present framework chooses the second option.
\end{remark}

\section{Conclusion}\label{sec:conclusion}

We fixed a complete metric space $(X,\dist)$ and a closed invariant set $C$.
We imposed a contractive inequality with a computable modulus on $C$.
We named the modulus $\kappa$.
We assumed $\kappa<1$.
This single inequality produced three quantitative outputs.

The first output is an a priori error bound.
It is explicit:
\[
\dist(x_n,x^\ast)\le \Phi_{\mathrm{geo}}(n;\kappa,\delta_0)=\frac{\kappa^{\,n}}{1-\kappa}\,\delta_0,
\qquad
\delta_0=\dist(Tx_0,x_0),
\]
and it can be sharpened by iterating the gauge when the gauge is known in closed form.
This isolates the computational content of the fixed-point statement.
It follows the quantitative direction developed in recent work on residual bounds and iteration complexity
\cite{ContrerasCominetti2021OptimalErrorBounds,BotNguyen2022FastKMResidual,LeusteanPinto2022AltHalpernMannRates,Foglia2025RateIterativeMethodsFPP,BravoCominettiLee2026MinimaxHalpern}.

The second output is data dependence.
It is explicit:
\[
\dist(x^\ast,y^\ast)\le \StabC\,\varepsilon_C(T,S),
\qquad
\StabC=\frac{1}{1-\kappa},
\qquad
\varepsilon_C(T,S)=\sup_{x\in C}\dist(Tx,Sx).
\]
The constant is forced by elementary examples.
It is therefore not a matter of taste.
This aligns with the way stability is treated in iterative-scheme papers, but with the dependence on $\kappa$ made unavoidable
\cite{TyagiVashistha2024ATDataDependence,KumarEtAl2025BVP,PanjaEtAl2022JIA}.

The third output is robustness under inexact evaluation.
An inexact orbit $\dist(\tilde x_{n+1},T\tilde x_n)\le \eta_n$ obeys a closed recursion.
It yields a certified error floor of order $\bar\eta/(1-\kappa)$ under uniformly bounded noise.
This connects the fixed-point statement to computation.
Related perturbation viewpoints appear in modern analyses of inertial and stochastic Halpern-type schemes
\cite{CortildPeypouquet2024KMInertia,PischkePowell2024StochasticHalpern,ColaoFlammarionMaisieres2026HalpernSGD,AtenasDaoTam2026VarStepsize}.

The applications were selected for verifiability.
For Hammerstein and Volterra integral equations, the data reduce to $(M_K,L_f,M_{f_0},\|g\|_\infty)$ and the chosen initialization.
For boundary value problems, the data reduce to $(M_G,L_F,M_{F_0},\|\ell\|_\infty)$.
In both cases the constants yield a certified convergence rate and a certified stability factor.
The worked examples confirm that the bounds are executable as written.

The scope is controlled by the word ``verifiable''.
Nonexpansive regimes are excluded unless one adds an error-bound mechanism.
Implicit moduli are excluded unless one can compute $\kappa(R;\theta)$ on a certified region.
This restriction is principled.
A theorem that does not export $\Phi$ does not certify accuracy.

Two directions remain.
The first is to replace strict contraction by an error-bound hypothesis and retain explicit $\Phi$ in nonexpansive settings, following the quantitative literature on asymptotic regularity and residual rates
\cite{ContrerasCominetti2021OptimalErrorBounds,LeusteanPinto2022AltHalpernMannRates,LiuLourenco2024KaramataRates}.
The second is to couple the present stability module with operator-splitting constructions in monotone inclusion problems, where fixed-point formulations are structural
\cite{Combettes2023GeometrySplitting,DiChiWu2026HJProximals,AtenasDaoTam2026VarStepsize}.
Both directions keep the same standard.
The hypotheses must be checkable.
The constants must be visible.


\section*{Acknowledgements}
The authors gratefully acknowledge {\bf Dr. Ramachandra R. K.,} Principal, Government College (Autonomous), Rajahmundry, A.P., India, for providing a supportive research environment at the institution and for his continuous encouragement and unwavering support of research activities.
\section*{Funding}
The authors received no external funding for this work.

\section*{Conflict of interest}
The authors declare that they have no conflict of interest.

\section*{Ethics statement}
This article does not contain any studies involving human participants or animals performed by any of the authors.
No personal data were collected or processed.

.
\nocite{*}
\bibliographystyle{amsplain}
\bibliography{refs}

\end{document}